\newtheorem{thm}{Theorem}
\numberwithin{thm}{section}
\newtheorem{cor}[thm]{Corollary}
\newtheorem{lem}[thm]{Lemma}
\newtheorem{prop}[thm]{Proposition}
\numberwithin{equation}{section}
\theoremstyle{definition}
\newtheorem{defin}[thm]{Definition}
\newtheorem{exa}[thm]{Example}
\renewcommand\emptyset{\varnothing}
\newcommand\commentout[1]{}
\newcommand\dist{\operatorname{dist}}
\newcommand\Em{\operatorname{Em}}
\newcommand\bI{\mathbf{I}}
\newcommand\Tstrut{\rule{0pt}{2.6ex}}         
\newcommand\Bstrut{\rule[-0.9ex]{0pt}{0pt}}   
\newcommand{\Inv}{\mathcal{I}}
\begin{document}


\baselineskip=17pt



\title{Consecutive Patterns in Inversion Sequences II: Avoiding Patterns of Relations}

\author{Juan S. Auli}
\address{Department of Mathematics\\
         Dartmouth College\\
         Hanover, NH 03755\\
         U.S.A.}
\email{juan.s.auli.gr@dartmouth.edu}

\author{Sergi Elizalde}
\address{Department of Mathematics\\
         Dartmouth College\\
         Hanover, NH 03755\\
         U.S.A.}
\email{sergi.elizalde@dartmouth.edu}
\urladdr{http://math.dartmouth.edu/~sergi}

\begin{abstract}
Inversion sequences are integer sequences $e=e_{1}e_{2}\dots e_{n}$ such that $0\leq e_{i}<i$ for each $i$.
The study of patterns in inversion sequences was initiated by Corteel--Martinez--Savage--Weselcouch and Mansour--Shattuck in the classical (non-consecutive) case, and later by Auli--Elizalde in the consecutive case, where the entries of a pattern are required to occur in adjacent positions.
In this paper we continue this investigation by considering {\em consecutive patterns of relations}, in analogy to the work of Martinez--Savage in the classical case. Specifically, given two binary relations $R_{1},R_2\in\{\leq,\geq,<,>,=,\neq\}$, we study inversion sequences $e$ with no subindex $i$ such that $e_{i}R_{1}e_{i+1}R_{2}e_{i+2}$.

By enumerating such inversion sequences according to their length, we obtain well-known quantities such as Catalan numbers, Fibonacci numbers and central polynomial numbers, relating inversion sequences to other combinatorial structures.
We also classify consecutive patterns of relations into Wilf equivalence classes, according to the number of inversion sequences avoiding them,
and into more restrictive classes that consider the positions of the occurrences of the patterns.

As a byproduct of our techniques, we obtain a simple bijective proof of a result of Baxter--Shattuck and Kasraoui about Wilf-equivalence of vincular patterns,
and we prove a conjecture of Martinez and Savage, as well as related enumeration formulas for inversion sequences satisfying certain unimodality conditions.
\end{abstract}

\keywords{Inversion sequence, pattern avoidance, pattern of relations, consecutive pattern, Wilf equivalence}

\subjclass[2010]{Primary 5A05; Secondary 05A15, 05A19.}


\maketitle

\section{Introduction}\label{sec:intro}

A common encoding of permutations is by their inversion sequences. Specifically, denoting by $S_n$ the set of permutations of $[n]=\{1,2,\dots,n\}$, and by $\bI_n$ the set of inversion sequences of length $n$ ---that is, integer sequences $e=e_{1}e_{2}\dots e_{n}$ with $0\leq e_{i}<i$ for each $i$---, one can define a bijection $\Theta:S_{n}\rightarrow\bI_{n}$ that assigns to each $\pi\in S_n$ its inversion sequence
\begin{equation}\label{eq:theta_bijection}
\Theta(\pi)=e=e_{1}e_{2}\dots e_{n}, \quad\textnormal{ where }\quad
e_{i}=\left|\{j:j<i\textnormal{ and }\pi_{j}>\pi_{i}\}\right|.
\end{equation}
Clearly, $e_1+\dots+e_n$ is the number of inversions of $\pi$, namely, pairs $(i,j)$ with $i<j$ and $\pi_i>\pi_j$.

In analogy to patterns in permutations, a research area that has received a lot of attention in the last few decades, one can study patterns in inversion sequences. In this context,
a {\em pattern} is a sequence $p=p_{1}p_{2}\dots p_{r}$ with $p_{i}\in\{0,1,\ldots,r-1\}$ for each $i$, where any value $j>0$ can appear in $p$ only if $j-1$ appears as well.
Given a word $w=w_1w_2\dots w_k$ over the integers, define its {\em reduction} to be the word obtained by
replacing all the occurrences of the $i$th smallest entry of $w$ with $i-1$ for all $i$. Then, an inversion sequence $e$ {\em contains} the
classical pattern $p=p_{1}p_{2}\dots p_{r}$ if there exists
a subsequence
$e_{i_{1}}e_{i_{2}}\dots e_{i_{r}}$ of $e$ (where $i_1<\dots<i_r$) with reduction $p$.
Otherwise, we say that $e$ {\em avoids} $p$. For instance, the inversion sequence $e=0014224$ avoids the pattern $210$, but it contains the pattern $101$ because $e_{4}e_{5}e_{7}=424$ has reduction $101$.

The study of classical patterns in inversion sequences was started by Corteel, Martinez, Savage and Weselcouch~\cite{MartinezSavageI}, and Mansour and Shattuck~\cite{MansourShattuck}. Their work connected classical patterns in inversion sequences to other combinatorial structures, which inspired a growing body of research on classical patterns in inversion sequences~\cite{Beaton, Bouvel, KimLin, KimLinII, Lin, MartinezSavageII, Yan}.

Motivated by~\cite{MartinezSavageI} and~\cite{MansourShattuck}, and by the growing interest in consecutive patterns in permutations~\cite{ElizaldeNoy,Elizalde}, we introduced consecutive patterns in inversion sequences and initiated an analogous study in~\cite{AuliElizalde}. In the definition below, the entries of a consecutive pattern are underlined to distinguish it from a classical pattern.

\begin{defin} An inversion sequence $e$ {\em contains} the consecutive pattern
$p=\underline{p_{1}p_{2}\dots p_{r}}$ if there is a consecutive subsequence
$e_{i}e_{i+1}\dots e_{i+r-1}$ of $e$ whose reduction is $p$. In this case, we call $e_{i}e_{i+1}\dots e_{i+r-1}$ an {\em occurrence} of $p$ in position $i$.
If $e$ does not contain $p$, then we say that $e$
{\em avoids} $p$. Denote by $\bI_n(p)$ the set of inversion sequences of length $n$ that avoid $p$.
\end{defin}

\begin{exa}\label{exa:contain_avoid_consec}
The inversion sequence $e=002241250\in\bI_{9}$ avoids the consecutive pattern $\underline{210}$, even though it contains the classical pattern $210$. On the other hand, $e$ contains $\underline{201}$ because $e_{5}e_{6}e_{7}$ is an occurrence of $\underline{201}$ in position 5.
\end{exa}

It is often useful to represent an inversion sequence $e$ as an underdiagonal lattice path from the origin to the line $x=n$, consisting of unit horizontal steps $E=(1,0)$ and unit vertical steps $N=(0,1)$ and $S=(0,-1)$. Each entry $e_{i}$ of $e$ is represented by a horizontal step: a segment between the points $(i-1,e_{i})$ and $(i,e_{i})$. Any necessary vertical steps are then inserted to make the path connected (see Figure~\ref{fig:path_represent}).

\begin{figure}[htb]
	\noindent \begin{centering}
	\includegraphics[scale=0.6]{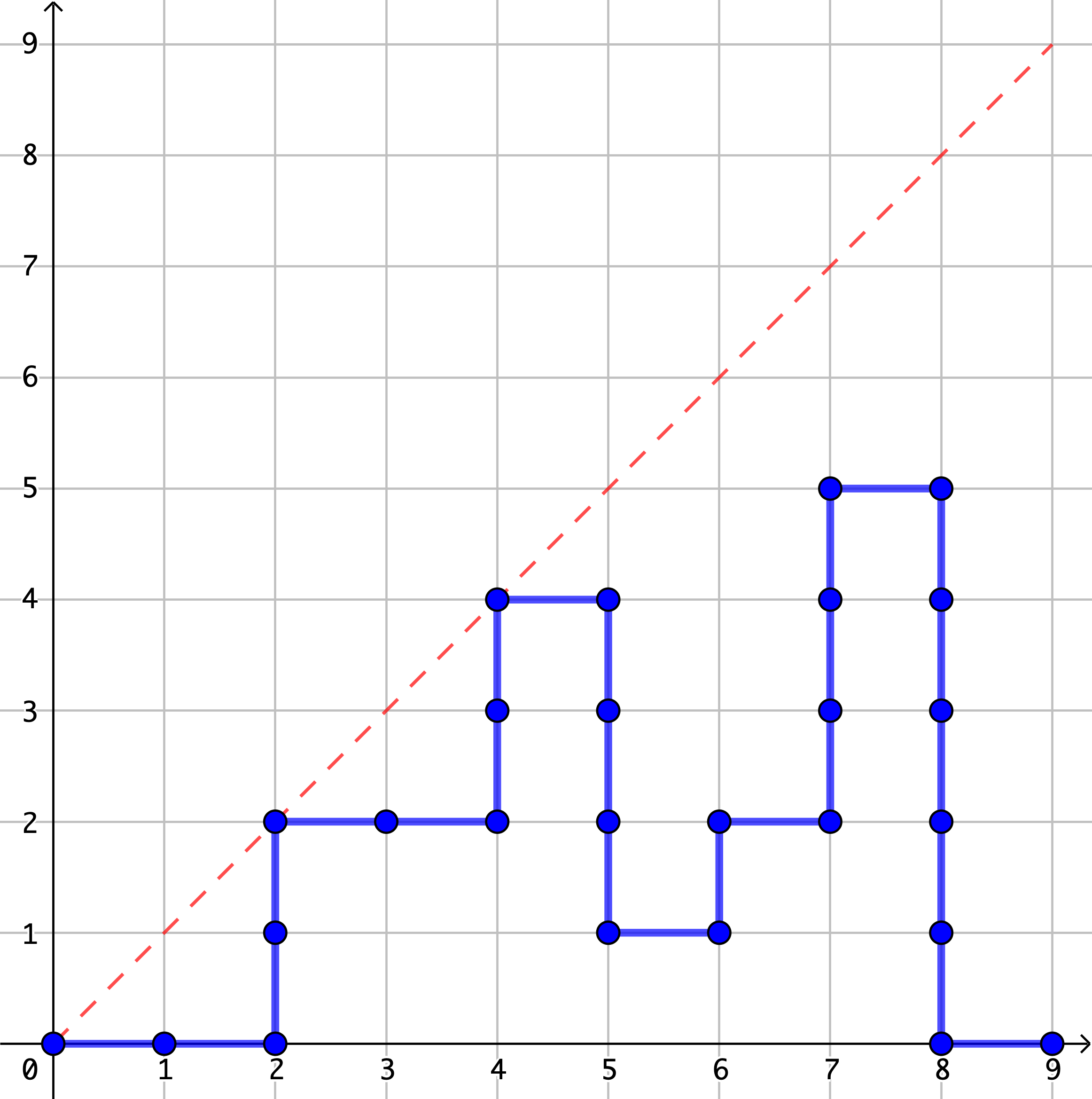}
		\par\end{centering}

	\protect\caption{Representation of $e=002241250\in\bI_{9}$ as a lattice path.\label{fig:path_represent}}
\end{figure}

Extending the systematic study of
Corteel {\it et al.}~\cite{MartinezSavageI} for classical patterns in inversion sequences,
Martinez and Savage~\cite{MartinezSavageII} reframe the notion of a
pattern of length 3 to instead consider a triple of binary relations between the entries of the occurrence. Given
a fixed triple of binary relations $\left(R_{1},R_{2},R_{3}\right)$, where
$R_{i}\in \{\leq,\geq,<,>,=,\neq,-\}$ for all $i$, they study the set
$\bI_{n}\left(R_{1},R_{2},R_{3}\right)$ consisting of those $e\in \bI_{n}$
with no subindices $i<j<k$ such that $e_{i}R_{1}e_{j}$, $e_{j}R_{2}e_{k}$ and
$e_{i}R_{3}e_{k}$. The symbol $-$ denotes the trivial relation where all elements are related, that is, $x-y$ for all $x,y$.

\begin{exa}
$\bI_{n}\left(\geq,\leq,\leq\right)$ is the set of inversion sequences
$e\in \bI_{n}$ with no $i<j<k$ such that $e_{i}\geq e_{j}$, $e_{j}\leq e_{k}$
and $e_{i}\leq e_{k}$; equivalently, the set of inversion sequences in $\bI_{n}$
avoiding all the patterns in the set $\{000,001,101,102\}$. On the other hand,
$\bI_{n}\left(\geq,\leq,-\right)$ denotes the set of inversion sequences
avoiding all the patterns in the set $\{000,001,100,101,102,201\}$.
\end{exa}

In this paper, we continue the work from~\cite{AuliElizalde} on consecutive patterns in inversion sequences by considering consecutive analogues of the notions introduced by Martinez and Savage~\cite{MartinezSavageII}. Specifically, we focus on the consecutive analogues of the sets $\bI_{n}\left(R_{1},R_{2},-\right)$. These are the most natural ones to study, since the general case would impose a restrictive relation $R_3$ between non-consecutive entries.

\begin{defin}
Let $R_{1},R_2\in\{\leq,\geq,<,>,=,\neq\}$. An inversion sequence $e$ {\em contains} the {\em consecutive pattern
of relations} $\left(\underline{R_{1},R_{2}}\right)$ if there is an $i$ such that $e_{i}R_{1}e_{i+1}$ and $e_{i+1}R_{2}e_{i+2}$.
In this case, we call $e_{i}e_{i+1}e_{i+2}$ an {\em occurrence} of $\left(\underline{R_{1},R_{2}}\right)$ in position $i$.
If $e$ does not contain $\left(\underline{R_{1},R_{2}}\right)$, then we say that $e$ {\em avoids} $\left(\underline{R_{1},R_{2}}\right)$. Denote by $\bI_{n}\left(\underline{R_{1},R_{2}}\right)$ the set of inversion sequences of length $n$ that avoid $\left(\underline{R_{1},R_{2}}\right)$.
\end{defin}

\begin{exa}
The inversion sequence $e=002241250$ contains $\left(\underline{>,<}\right)$ because $e_{5}e_{6}e_{7}=412$ is an occurrence of this pattern. However, $e$ avoids $\left(\underline{=,>}\right)$, and so $e\in\bI_{9}\left(\underline{=,>}\right)$.
\end{exa}

Note that an occurrence of a consecutive pattern $\left(\underline{R_{1},R_{2}}\right)$ in an inversion sequence is also an occurrence of some consecutive pattern of length 3. Thus, every set $\bI_{n}\left(\underline{R_{1},R_{2}}\right)$ can be expressed as an intersection $\bigcap_p\bI_{n}(p)$ where $p$ ranges over the consecutive patterns $p=\underline{p_1p_2p_3}$ satisfying $p_1R_1p_2$ and $p_2R_2p_3$. For instance, since occurrences of $\left(\underline{\geq,=}\right)$ are occurrences of either $\underline{100}$ or $\underline{000}$, we can write $\bI_{n}\left(\underline{\geq,=}\right)=\bI_{n}\left(\underline{000}\right)\cap\bI_{n}\left(\underline{100}\right)$.

This paper focuses on the enumeration of the sets $\bI_{n}\left(\underline{R_{1},R_{2}}\right)$.
These sets often exhibit more structure that the sets $\bI_{n}(p)$ avoiding a single consecutive pattern of length 3. Consequently, the sequences $\left|\bI_{n}\left(\underline{R_{1},R_{2}}\right)\right|$ are often simpler than the sequences $|\bI_{n}(p)|$ studied in~\cite{AuliElizalde},
and they provide more connections to other combinatorial objects and well-known integer sequences.

In addition to providing enumeration formulas, we also introduce several notions of equivalence between consecutive patterns of relations. These definitions are analogous to those for consecutive patterns in inversion sequences considered in~\cite{AuliElizalde}, which in turn are based on standard notions of equivalence between consecutive  patterns in permutations (see~\cite{DwyerElizalde,Elizalde}).

\begin{defin}\label{def:gen_Wilf_equiv} Let $\left(\underline{R_{1},R_{2}}\right)$ and $\left(\underline{R'_{1},R'_{2}}\right)$ be consecutive patterns of relations in inversion sequences. We say that $\left(\underline{R_{1},R_{2}}\right)$
and $\left(\underline{R'_{1},R'_{2}}\right)$ are
\begin{itemize}
\item {\it Wilf equivalent\/}, denoted by $\left(\underline{R_{1},R_{2}}\right)\sim \left(\underline{R'_{1},R'_{2}}\right)$, if
$\left|\bI_{n}\left(\underline{R_{1},R_{2}}\right)\right|=
\left|\bI_{n}\left(\underline{R'_{1},R'_{2}}\right)\right|$
for all~$n$;
\item {\it strongly Wilf equivalent\/}, denoted by
$\left(\underline{R_{1},R_{2}}\right)\stackrel{s}{\sim}\left(\underline{R'_{1},R'_{2}}\right)$,
if for each $n$ and $m$, the number of inversion sequences in
$\bI_{n}$ containing $m$ occurrences of $\left(\underline{R_{1},R_{2}}\right)$ is the
same as for $\left(\underline{R'_{1},R'_{2}}\right)$;
\item {\it super-strongly Wilf equivalent\/}, denoted by
$\left(\underline{R_{1},R_{2}}\right)\stackrel{ss}{\sim}\left(\underline{R'_{1},R'_{2}}\right)$,
if the above condition holds for any set of prescribed positions for the $m$ occurrences.
\end{itemize}
\end{defin}
We use the term {\em generalized Wilf equivalence} to refer to an equivalence of any one of the these three types.

\section{Summary of Results}

We will show that the 36 consecutive patterns of relations of the form $\left(\underline{R_{1},R_{2}}\right)$, with $R_{1},R_2\in{\{\leq,\geq,<,>,=,\neq\}}$ fall into
30 Wilf equivalence classes, and into 31 strong Wilf equivalence classes, which are also super-strong equivalence classes. The next result provides this classification. Patterns are listed from least avoided to most avoided in inversion sequences of length~$10$.
\begin{thm}\label{EquivIneq}  A complete list of the generalized
Wilf equivalences between consecutive patterns of relations $\left(\underline{R_{1},R_{2}}\right)$ in inversion sequences
is as follows:
\begin{multicols}{2}
 \begin{itemize}[itemsep=1ex,leftmargin=1.5cm]
 \item[(i)]$\left(\underline{\geq,<}\right)\stackrel{ss}{\sim}\left(\underline{<,\geq}\right)
 \sim\left(\underline{\neq,\geq}\right).$
 \item[(ii)] $\left(\underline{\geq,\geq}\right)\stackrel{ss}{\sim}\left(\underline{<,<}\right)$.
 \item[(iii)] $\left(\underline{\geq,=}\right)\stackrel{ss}{\sim}\left(\underline{=,\geq}\right)$.
 \item[(iv)] $\left(\underline{\geq,>}\right)
 \stackrel{ss}{\sim}\left(\underline{>,\geq}\right)$.
 \item[(v)] $\left(\underline{>,=}\right)\stackrel{ss}{\sim}\left(\underline{=,>}\right)$.
 \end{itemize}
\end{multicols}
\end{thm}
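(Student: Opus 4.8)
The plan is to reduce every statistic in Definition~\ref{def:gen_Wilf_equiv} to the word of adjacent comparisons of an inversion sequence. For $e\in\bI_n$ and $1\le i\le n-1$, let $c_i\in\{<,=,>\}$ record the relation between $e_i$ and $e_{i+1}$, and write $c=c_1c_2\cdots c_{n-1}$. Reading each relation $R\in\{\le,\ge,<,>,=,\neq\}$ as the subset of $\{<,=,>\}$ it denotes, an occurrence of $\left(\underline{R_1,R_2}\right)$ at position $i$ is precisely the event ``$c_i\in R_1$ and $c_{i+1}\in R_2$''. Hence the set of occurrence positions of a pattern, and every quantity in Definition~\ref{def:gen_Wilf_equiv}, depends on $e$ only through $c$. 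By inclusion--exclusion, to prove a super-strong equivalence it suffices to exhibit, for each prescribed set $S$ of positions, a bijection between the inversion sequences whose comparison word forces an occurrence of the first pattern at every $i\in S$ and those forcing an occurrence of the second at every $i\in S$. When $S$ contains two consecutive indices both counts vanish, so only sets $S$ with no two consecutive elements need attention, and then the forced constraints act on pairwise disjoint windows $\{i,i+1,i+2\}$.

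\textbf{The local equivalences (ii)--(v).} For (iii), (iv) and (v) I would build position-preserving bijections acting independently on each window. For (v), at an occurrence of $\left(\underline{>,=}\right)$ we have $e_i>e_{i+1}=e_{i+2}$; keeping $e_i$ and $e_{i+2}$ fixed and resetting $e_{i+1}$ to equal its left neighbour $e_i$ turns the window into $e_i=e_{i+1}>e_{i+2}$, an occurrence of $\left(\underline{=,>}\right)$, and this toggle is an involution. The same endpoint-fixing toggle handles (iii), with windows where $e_i=e_{i+2}$ as fixed points; for (iv) the middle entry ranges over $\{e_{i+2}+1,\dots,e_i\}$ for $\left(\underline{\ge,>}\right)$ and over $\{e_{i+2},\dots,e_i-1\}$ for $\left(\underline{>,\ge}\right)$, two intervals of equal length matched by $e_{i+1}\mapsto e_{i+1}-1$. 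Since distinct windows share at most a fixed endpoint, these local maps compose to the required bijection. Equivalence (ii), $\left(\underline{\ge,\ge}\right)\stackrel{ss}{\sim}\left(\underline{<,<}\right)$, is of a different nature: in terms of the ascent indicator $t_i=A$ if $c_i=\,<$ and $t_i=N$ otherwise, the two patterns occur at $i$ exactly when $(t_i,t_{i+1})=(N,N)$ and $(A,A)$ respectively, so it suffices to show that the joint distribution of $(t_1,\dots,t_{n-1})$ over $\bI_n$ is invariant under global complementation $A\leftrightarrow N$; I would prove this by an involution on $\bI_n$ (or a symmetry of the associated transfer matrix).

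\textbf{The subtle equivalence (i).} Here $\left(\underline{\ge,<}\right)$ and $\left(\underline{<,\ge}\right)$ mark valleys $e_i\ge e_{i+1}<e_{i+2}$ and peaks $e_i<e_{i+1}\ge e_{i+2}$; in type language they are the adjacent transposition $(t_i,t_{i+1})=(N,A)\leftrightarrow(A,N)$. The endpoint-fixing toggle \emph{fails} here, because with $e_i,e_{i+2}$ frozen the admissible middle values number $\min(e_i+1,e_{i+2})$ for a valley but $i+1-\max(e_i+1,e_{i+2})$ for a peak, and these disagree in general. Thus the super-strong half of (i) requires a genuinely non-local bijection realizing disjoint adjacent transpositions of the type vector while fixing all other types; constructing and verifying this map is the step I expect to be the main obstacle. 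For the Wilf-only equivalence $\left(\underline{<,\ge}\right)\sim\left(\underline{\neq,\ge}\right)$, the avoiders are described cleanly by their comparison words: $\bI_n\left(\underline{<,\ge}\right)$ consists of those $e$ whose word lies in $\{=,>\}^{*}\{<\}^{*}$ (equivalently, $e$ is weakly decreasing, hence identically zero, up to some index and strictly increasing thereafter), while $\bI_n\left(\underline{\neq,\ge}\right)$ forbids any strict comparison immediately followed by a non-ascent. I would match these two families by a direct (not occurrence-preserving) bijection, or by computing both counts; the equivalence is not strong because $\left(\underline{\neq,\ge}\right)$ admits occurrences, such as those with $c_i=\,>$, that $\left(\underline{<,\ge}\right)$ cannot, so the number of sequences with a prescribed number of occurrences already differs.

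\textbf{Completeness.} Finally, to show the list is exhaustive I would tabulate $\left|\bI_n\left(\underline{R_1,R_2}\right)\right|$ for all $36$ patterns and all $n\le 10$. Two patterns in different intended classes are separated by some $n\le 10$, and since Wilf equivalence demands equality for \emph{all} $n$, one discrepancy rules it out; this yields exactly the $30$ Wilf classes. The split of class (i) into the two strong classes $\{\left(\underline{\ge,<}\right),\left(\underline{<,\ge}\right)\}$ and $\{\left(\underline{\neq,\ge}\right)\}$, and the coincidence of the strong and super-strong classifications, then follow from the occurrence-count discrepancy noted above together with the bijections of the previous paragraphs, giving the claimed $31$ strong (equivalently super-strong) classes.
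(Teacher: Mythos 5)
Your overall framework---reducing everything to the comparison word and using inclusion--exclusion on the sets of prescribed occurrence positions---is exactly the paper's Lemma~\ref{lem:inclusion_exlusion_consec_relations}, and your endpoint-fixing toggle does prove (v), where consecutive occurrences are indeed impossible. But your blanket claim that ``when $S$ contains two consecutive indices both counts vanish'' is false for (iii) and (iv), and the disjoint-window argument collapses there: the all-zero sequence has an occurrence of $\left(\underline{\geq,=}\right)$ at every position, and $e=002210\in\bI_{6}$ has occurrences of $\left(\underline{\geq,>}\right)$ at both positions $3$ and $4$. For such $S$ the windows overlap in two entries and your local toggle is not even well defined. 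The fix (which is what the paper does) is to decompose $S$ into maximal blocks of consecutive indices and build the bijection block by block: a block of occurrences of $\left(\underline{=,\geq}\right)$ forces $e_{i}=\cdots=e_{i+l+1}\ge e_{i+l+2}$, which is sent to a configuration $e_{i}\ge e_{i+1}=\cdots=e_{i+l+2}$, and similarly for (iv).

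The more serious gap is the one you flag yourself: you have no proof of $\left(\underline{\geq,<}\right)\stackrel{ss}{\sim}\left(\underline{<,\geq}\right)$, and you leave (ii) to an unspecified involution. Both are settled at once by the complement $e\mapsto e^{C}$, $e^{C}_{i}=i-1-e_{i}$, which satisfies $e_{i}\ge e_{i+1}$ if and only if $e^{C}_{i}<e^{C}_{i+1}$. This flips \emph{every} letter of your type vector simultaneously, so it carries the valley type $(N,A)$ at position $i$ to the peak type $(A,N)$ at the same position, giving $\Em\left(\left(\underline{\geq,<}\right),e\right)=\Em\left(\left(\underline{<,\geq}\right),e^{C}\right)$ and likewise $\Em\left(\left(\underline{\geq,\geq}\right),e\right)=\Em\left(\left(\underline{<,<}\right),e^{C}\right)$; you were hunting for a harder and unnecessary map realizing local transpositions while fixing the other types. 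Two smaller loose ends: the Wilf equivalence $\left(\underline{<,\geq}\right)\sim\left(\underline{\neq,\geq}\right)$ holds because the avoidance sets are literally equal (if a $\left(\underline{<,\geq}\right)$-avoider had a descent $e_{i}>e_{i+1}$, then $e_{i}>0$ forces an earlier ascent, and the last such ascent before $i$ creates a forbidden peak), and the failure of strong equivalence needs an explicit witness such as $0100\in\bI_{4}$, which has two occurrences of $\left(\underline{\neq,\geq}\right)$ while no element of $\bI_{4}$ has two occurrences of $\left(\underline{<,\geq}\right)$; your heuristic that one pattern ``admits more kinds of occurrences'' would wrongly rule out most of the equivalences the theorem asserts.
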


It is worth pointing out that the patterns $\left(\underline{\geq,<}\right)$ and $\left(\underline{\neq,\geq}\right)$ (similarly, $\left(\underline{<,\geq}\right)$ and $\left(\underline{\neq,\geq}\right)$) are Wilf equivalent but not strongly Wilf equivalent.

\begin{cor}\label{cor:Wilf-strongWilf}
Wilf equivalence and strong Wilf equivalence classes of consecutive patterns of relations in inversion sequences do not coincide in general.
\end{cor}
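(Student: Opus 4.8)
The plan is to deduce the corollary directly from Theorem~\ref{EquivIneq}(i) by analyzing the single pair $\left(\underline{\geq,<}\right)$ and $\left(\underline{\neq,\geq}\right)$, which that theorem already asserts to be Wilf equivalent. It therefore suffices to show that these two patterns are \emph{not} strongly Wilf equivalent, that is, to produce a length $n$ and a count $m$ for which the number of inversion sequences in $\bI_n$ containing exactly $m$ occurrences of the two patterns disagree.

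The guiding observation is that $\left(\underline{\geq,<}\right)$ is incompatible with overlapping occurrences: an occurrence at position $i$ forces $e_{i+1}<e_{i+2}$, whereas an occurrence at the next position $i+1$ would force $e_{i+1}\geq e_{i+2}$, which is impossible. Consequently, in an inversion sequence of length $4$ the only candidate positions are $1$ and $2$, and since these overlap, no $e\in\bI_4$ can contain two occurrences of $\left(\underline{\geq,<}\right)$. The pattern $\left(\underline{\neq,\geq}\right)$ behaves differently, since its two relations are jointly satisfiable by a strict descent; concretely, $e=0100\in\bI_4$ has occurrences at both position $1$ (because $0\neq 1$ and $1\geq 0$) and position $2$ (because $1\neq 0$ and $0\geq 0$).

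Taking $n=4$ and $m=2$, this yields a count of $0$ for $\left(\underline{\geq,<}\right)$ but a strictly positive count for $\left(\underline{\neq,\geq}\right)$, so the two patterns fail to be strongly Wilf equivalent. Combined with their Wilf equivalence from Theorem~\ref{EquivIneq}(i), this exhibits a single pair witnessing that the two classifications differ, which is exactly the content of the corollary. There is essentially no obstacle here beyond bookkeeping: the one point requiring care is handling the equalities inside the relations (here $0\geq 0$ in the second occurrence of $\left(\underline{\neq,\geq}\right)$), so I would verify the overlap analysis against the precise definitions of $\geq$, $<$ and $\neq$ rather than relying on intuition about strict versus weak inequalities.
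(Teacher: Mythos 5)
Your proof is correct and follows essentially the same route as the paper, which deduces the corollary from the pair in Theorem~\ref{EquivIneq}(i) using the very same witness $0100\in\bI_4$ (the paper phrases it with $\left(\underline{<,\geq}\right)$ rather than $\left(\underline{\geq,<}\right)$, an immaterial difference since those two are super-strongly equivalent, and your explicit overlap argument just fills in a step the paper leaves as an assertion).
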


This result is interesting for two reasons. On the one hand, Wilf equivalence and strong Wilf equivalence classes of single consecutive patterns are conjectured to coincide, both in the setting of permutations (see Nakamura's conjecture \cite[Conjecture~5.6]{Nakamura}) and in the setting of inversion sequences (see~{\cite[Conjecture~2.3]{AuliElizalde}}). Corollary~\ref{cor:Wilf-strongWilf} shows that, perhaps surprisingly, the analogous statement for consecutive patterns of relations does not hold.
On the other hand, when considering consecutive patterns of relations in the setting of permutations, by
defining $\pi_i\pi_{i+1}\pi_{i+2}$  to be an occurrence of $\left(\underline{R_{1},R_{2}}\right)$ in $\pi\in S_n$ if $\pi_{i}R_{1}\pi_{i+1}$ and $\pi_{i+1}R_{2}\pi_{i+2}$,
Wilf equivalence and strong Wilf equivalence classes of patterns of the form $\left(\underline{R_{1},R_{2}}\right)$ in permutations coincide. In fact, all such equivalences are obtained from trivial symmetries, unlike in the case of consecutive patterns of relations in inversion sequences.

As a consequence of Theorem~\ref{EquivIneq}(iv), we will deduce the following result about permutation patterns, originally conjectured by
Baxter and Pudwell~{\cite[Conjecture 17]{BaxterPudwell}},
and later proved by
Baxter and Shattuck~{\cite[Corollary 11]{BaxterShattuck}} and by Kasraoui~\cite[Corolary~1.9(a)]{Kasraoui}. Here we present a direct bijective proof based on consecutive patterns of relations in inversion sequences, which is simpler than the previously known proofs. We write $\underline{124}3$ and $\underline{421}3$ to denote {\em vincular} (also called {\em generalized}) permutation patterns, where entries in underlined positions are required to be adjacent in an occurrence (see~\cite{Babson}).
We use $S_n(\sigma)$ to denote the set of permutations in $S_n$ that avoid a pattern~$\sigma$, and we say that two permutation patterns $\sigma$ and $\tau$ are Wilf equivalent if $|S_n(\sigma)|=|S_n(\tau)|$ for all~$n$.

\begin{cor}\label{BaxterandPudwell}
The vincular permutation patterns $\underline{124}3$ and $\underline{421}3$
are Wilf equivalent.
\end{cor}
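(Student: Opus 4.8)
The plan is to transport both avoidance problems to the inversion-sequence side through the bijection $\Theta$ of~\eqref{eq:theta_bijection}, and then to read off the desired equality from Theorem~\ref{EquivIneq}(iv). The point is that avoidance of a vincular pattern whose vincular block has length three should become avoidance of a \emph{consecutive pattern of relations}, so that the two patterns $\underline{124}3$ and $\underline{421}3$ land on the two sides of the super-strong Wilf equivalence $\left(\underline{>,\geq}\right)\stackrel{ss}{\sim}\left(\underline{\geq,>}\right)$.

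First I would record how consecutive relations in $e=\Theta(\pi)$ reflect the local structure of $\pi$. Comparing the defining counts $e_m=|\{k<m:\pi_k>\pi_m\}|$ and $e_{m+1}=|\{k\le m:\pi_k>\pi_{m+1}\}|$ yields, for every index $m$, that $e_m\ge e_{m+1}$ if and only if $\pi_m<\pi_{m+1}$ (an ascent of $\pi$), and that within an ascent $e_m-e_{m+1}=|\{k<m:\pi_m<\pi_k<\pi_{m+1}\}|$. Hence $e_m>e_{m+1}$ precisely when $\pi_m<\pi_{m+1}$ \emph{and} some earlier entry of $\pi$ lies strictly between $\pi_m$ and $\pi_{m+1}$.

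Applying this to three consecutive entries gives the two translations I need. An occurrence of $\left(\underline{>,\geq}\right)$ at position $i$ means $e_i>e_{i+1}\ge e_{i+2}$, which by the fact above is equivalent to $\pi_i<\pi_{i+1}<\pi_{i+2}$ together with the existence of $k<i$ with $\pi_i<\pi_k<\pi_{i+1}$; reading the four entries $\pi_k,\pi_i,\pi_{i+1},\pi_{i+2}$ in positional order, this is exactly an occurrence of $2\underline{134}$. Likewise $\left(\underline{\geq,>}\right)$ at position $i$ means $e_i\ge e_{i+1}>e_{i+2}$, equivalent to $\pi_i<\pi_{i+1}<\pi_{i+2}$ with some $k<i$ satisfying $\pi_{i+1}<\pi_k<\pi_{i+2}$, i.e.\ an occurrence of $3\underline{124}$. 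Since $\Theta$ is a bijection, it restricts to bijections $S_n(2\underline{134})\to\bI_n\left(\underline{>,\geq}\right)$ and $S_n(3\underline{124})\to\bI_n\left(\underline{\geq,>}\right)$. Finally, the trivial symmetries of vincular patterns close the loop: reversal carries $\underline{421}3$ to $3\underline{124}$, and reverse-complement carries $\underline{124}3$ to $2\underline{134}$, so all of these have the same number of avoiders. Stringing everything together,
\[
\left|S_n(\underline{124}3)\right|=\left|S_n(2\underline{134})\right|=\left|\bI_n\left(\underline{>,\geq}\right)\right|=\left|\bI_n\left(\underline{\geq,>}\right)\right|=\left|S_n(3\underline{124})\right|=\left|S_n(\underline{421}3)\right|,
\]
where the central equality is Theorem~\ref{EquivIneq}(iv).

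The main obstacle is the bookkeeping in the third paragraph. One must verify that it is the \emph{strict} relation $e_m>e_{m+1}$ (rather than $e_m\ge e_{m+1}$) that encodes the presence of an intermediate earlier entry, and then track exactly which value-rank each of the four entries $\pi_k,\pi_i,\pi_{i+1},\pi_{i+2}$ occupies, so that the correct vincular pattern---and hence the correct trivial symmetry in the last step---is identified. The delicate feature is that $\Theta$ only sees entries \emph{to the left}, whereas the vincular patterns $\underline{124}3$ and $\underline{421}3$ place their free entry \emph{to the right}; it is precisely the reversal and reverse-complement symmetries that absorb this discrepancy, and keeping their action on the underlined blocks straight is where care is required.
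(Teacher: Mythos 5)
Your proposal is correct and follows essentially the same route as the paper: the same translation lemma (that $e_i\ge e_{i+1}$ encodes an ascent of $\pi$ and $e_i>e_{i+1}$ encodes an ascent with an earlier intermediate entry, exactly Lemma~\ref{lem:A200403}), the same identification of $\left(\underline{>,\geq}\right)$ and $\left(\underline{\geq,>}\right)$ with $2\underline{134}$ and $3\underline{124}$, the same use of reverse-complement and reverse, and the same appeal to Theorem~\ref{EquivIneq}(iv) as the central equality. Your intermediate formula $e_m-e_{m+1}=\left|\{k<m:\pi_m<\pi_k<\pi_{m+1}\}\right|$ on ascents is a slightly slicker way to get both halves of that lemma at once, but the argument is otherwise the paper's.
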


Aside from the classification provided in Theorem~\ref{EquivIneq}, the other central result in this paper is the enumeration of inversion sequences avoiding consecutive patterns of relations.
We will show that, for many patterns $(\underline{R_1,R_2})$, the sequence $\left|\bI_{n}\left(\underline{R_{1},R_{2}}\right)\right|$ matches an existing sequence in the On-line Encyclopedia of Integer Sequences (OEIS)~\cite{OEIS} enumerating other combinatorial objects. In most cases, we will prove this bijectively.
These results are summarized in Table~\ref{tab3}. For other patterns of relations, even though we have no closed formulas for $\left|\bI_{n}\left(\underline{R_{1},R_{2}}\right)\right|$, we can obtain recurrences to compute these numbers.

\begin{table}[htbp]
\footnotesize
\begin{center}
 \begin{tabular}{ccp{4.1cm}l }
 \hline
 Pattern $\left(\underline{R_{1},R_{2}}\right)$ & OEIS~\cite{OEIS} & Description & Initial terms $\left(\underline{R_{1},R_{2}}\right)$ for $1\le n\le 9$
 \Tstrut\Bstrut\\
 \hline
 $\left(\underline{\leq,\neq}\right)$ & A040000 & $2$ \ (for $n>1$) & $1,2,2,2,2,2,2,2,2$ \Tstrut\\
 $\left(\underline{\leq,\geq}\right)$ & A000027 & $n$ & $1,2,3,4,5,6,7,8,9$ \Tstrut\\
  $\left(\underline{\geq,\neq}\right)$ & A000124 \Tstrut & $\binom{n}{2}+1$ & $1,2,4,7,11,16,22,29,37$\\
 $\left(\underline{\geq,\leq}\right)$ & A000045 & $F_{n+1}$ & $1,2,3,5,8,13,21,34,55$  \Tstrut\\
 $\left(\underline{\neq,\leq}\right)$ & A000071 \Tstrut & $F_{n+2}-1$  & $1,2,4,7,12,20,33,54,88$\\
 $\left(\underline{\geq,<}\right)\stackrel{ss}{\sim}\left(\underline{<,\geq}\right)
 \sim\left(\underline{\neq,\geq}\right)$ & A000079 & $2^{n-1}$ & $1,2,4,8, 16,32,64, 128, 256$\Tstrut\\
 $\left(\underline{\neq,\neq}\right)$ & A000085 & Number of involutions of $[n]$ & $1,2,4,10,26,76,232,764, 2620$\Tstrut\\
  $\left(\underline{\leq,>}\right)$ & A000108 & $C_{n}$ \ (Catalan) & $1,2,5,14,42,132,429,1430, 4862$\Tstrut\\
    $\left(\underline{>,\leq}\right)$ & A071356 \Tstrut & Underdiagonal paths of from the origin to $x=n$ with steps $(0,1)$, $(1,0)$, $(1,2)$ & $1,2,6,20,72,272,1064,4272, 17504$\\
 $\left(\underline{=,\neq}\right)$ & A003422 & $0!+1!+2!+\dots+(n-1)!$ & $1,2,4,10,34,154,874,5914, 46234$\Tstrut\\
 $\left(\underline{\geq,\geq}\right)\stackrel{ss}{\sim}\left(\underline{<,<}\right)$ & A049774 &
 $\left|S_{n}\left(\underline{321}\right)\right|$ & $1,2,5,17,70,349,2017,13358, 99377$\Tstrut\\
  $\left(\underline{\neq,=}\right)$ & A000522 \Tstrut & $\sum_{i=0}^{n-1}(n-1)!/i!$ & $1,2,5,16,65,326,1957,13700, 109601$\\
  $\left(\underline{\geq,>}\right)\stackrel{ss}{\sim}\left(\underline{>,\geq}\right)$ & A200403 &
  $\left|S_{n}\left(\underline{124}3\right)\right|$ & $1,2,6,23,107,584,3660,25910, 204564$\Tstrut\\
  $\left(\underline{=,=}\right)$ & A052169 & $\frac{\left(n+1\right)!-d_{n+1}}{n}$  & $1,2,5,19,91,531,3641,28673, 254871$\Tstrut\Bstrut\\
 \hline
 \end{tabular}
 \end{center}
 \caption{Consecutive patterns of relations $\left(\underline{R_{1},R_{2}}\right)$ for which
 $\left|\bI_{n}\left(\underline{R_{1},R_{2}}\right)\right|$ appears in~\cite{OEIS} and has an existing alternative combinatorial
 interpretation. Here $F_n$ denotes the $n$th Fibonacci number, and $d_n$ is the number of derrangements of $[n]$.
 The patterns are listed from least avoided to most avoided in inversion sequences of length~$10$.}\label{tab3}
\end{table}

As a byproduct of our enumeration, we will prove the following result involving non-consecutive triples of relations, which was
conjectured by Martinez and Savage~{\cite[Section 2.19]{MartinezSavageII}}. After presenting our solution at the conference {\it Permutation Patterns 2018}, we learned that it has also been proved independently using different methods by Cao, Jin and Lin~\cite[Theorem~5.1]{CaoJinLin} and by Hossain~\cite{Hossain}.

\begin{thm}\label{carlaConjecture}
The sequence $\left|\bI_{n}\left(>,\leq,-\right)\right|$ has ordinary
generating function (OGF)
\[
\sum_{n\ge0} \left|\bI_{n}\left(>,\leq,-\right)\right|\,z^n=\frac{1 + 2z - \sqrt{1 - 4z - 4z^{2}}}{4z}.
\]
\end{thm}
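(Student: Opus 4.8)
The plan is to first convert the avoidance condition into a rigid structural description of the members of $\bI_n(>,\le,-)$, and then to enumerate that structure by a prefix/suffix decomposition and a two‑variable generating function.

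\textbf{Step 1 (structural characterization).} I would first show that $e\in\bI_n(>,\le,-)$ if and only if $e$ is \emph{unimodal}, meaning weakly increasing and then strictly decreasing: there is an index $p$ with $e_1\le e_2\le\dots\le e_p$ and $e_p>e_{p+1}>\dots>e_n$. For the forward direction, assume $e$ avoids $(>,\le,-)$; if $e$ has no strict descent it is weakly increasing and we are done, so let $p+1$ be the position of the first strict descent, whence $e_1\le\dots\le e_p$ and $e_p>e_{p+1}$. If the suffix were not strictly decreasing, pick the smallest $q\ge p+1$ with $e_q\le e_{q+1}$; then $e_{p+1}>\dots>e_q$ forces $e_p>e_{p+1}\ge e_q$, so the indices $p<q<q+1$ give $e_p>e_q\le e_{q+1}$, an occurrence of $(>,\le,-)$, a contradiction. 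Conversely, in a unimodal $e$, any pair $i<j$ with $e_i>e_j$ must have $j$ in the strictly decreasing suffix, and then every later entry satisfies $e_k<e_j$, so no triple $e_i>e_j\le e_k$ exists.

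\textbf{Step 2 (decomposition and reduction to a bivariate generating function).} Next I would decompose each unimodal $e$ uniquely as a weakly increasing prefix $e_1\le\dots\le e_p$ with top value $M:=e_p$, followed by a strictly decreasing suffix $e_{p+1}>\dots>e_n$ with entries in $\{0,1,\dots,M-1\}$. The point is that the constraint $e_i<i$ is automatic along the suffix (its values are distinct, bounded by $M-1\le p-2$, and decrease as the positions increase), so the suffix is an arbitrary strictly decreasing word over $\{0,\dots,M-1\}$, and there are exactly $\binom{M}{\ell}$ of length $\ell$. Writing $W(z,u)=\sum_{p\ge1}\sum_M W_p(M)\,z^pu^M$, where $W_p(M)$ is the number of weakly increasing inversion sequences of length $p$ ending in value $M$, attaching a suffix replaces $u^M$ by $\sum_{\ell\ge0}\binom{M}{\ell}z^\ell=(1+z)^M$, so that
\[
\sum_{n\ge0}\left|\bI_n(>,\le,-)\right|z^n = 1 + W(z,1+z).
\]

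\textbf{Step 3 (computing $W$ and finishing).} It then remains to find $W(z,u)$. Appending a top value to a weakly increasing sequence is a partial‑sum operation in the value variable, and since weakly increasing inversion sequences of length $p$ are counted by the Catalan number $C_p$ (so $\sum_M W_p(M)=C_p$), I would obtain
\[
W(z,u)=\frac{z\bigl(1-u\,C(uz)\bigr)}{1-u-z},\qquad C(x)=\frac{1-\sqrt{1-4x}}{2x}.
\]
Setting $u=1+z$ collapses the denominator to $1-(1+z)-z=-2z$, and since $uz=z+z^2$ this gives $1+W(z,1+z)=\tfrac12\bigl(1+(1+z)\,C(z+z^2)\bigr)$. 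Substituting $C(z+z^2)=\dfrac{1-\sqrt{1-4z-4z^2}}{2(z+z^2)}$ and simplifying yields exactly $\dfrac{1+2z-\sqrt{1-4z-4z^2}}{4z}$; equivalently, one checks that this series satisfies the quadratic $2zA^2-(1+2z)A+(1+z)=0$, which is the form most directly produced by the Catalan functional equation after the substitution $x\mapsto z+z^2$.

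\textbf{Main obstacle.} The conceptual heart is Step 1: seeing that the non‑consecutive condition $(>,\le,-)$ forces the single rigid unimodal shape (the same shape, incidentally, that characterizes $\bI_n(\underline{>,\le})$, which provides a consistency check against Table~\ref{tab3}). The most delicate computation is the closed form for $W(z,u)$ in Step 3: the position constraint $e_i<i$ truncates the naive partial‑sum recurrence and injects a corrective term proportional to $C_{p-1}u^{p}/(1-u)$ at each length, and it is precisely these Catalan total counts that, upon specializing $u=1+z$, manufacture the substitution $z\mapsto z+z^2$ inside the Catalan generating function.
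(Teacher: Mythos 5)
Your proposal is correct, and it reaches the generating function by a genuinely different route from the paper. The paper also begins with the unimodal characterization (it quotes Martinez--Savage for $\bI_{n}\left(>,\leq,-\right)$ and proves the matching statement for $\bI_n(\underline{>,\leq})$ in Lemma~\ref{lem:A071356}), but then it encodes each unimodal sequence as a \emph{marked Dyck path} with an unmarked tail --- marking, for each entry of the strictly decreasing tail, the ascending $N$ step at the corresponding height --- and enumerates those paths by a first-return decomposition $R=P_1EP_2N$, giving $R(z)=1/(1-zC(z+z^2))$. You instead split the sequence directly into its weakly increasing prefix with peak value $M$ and its tail, observe that the tail is just an $\ell$-subset of $\{0,\dots,M-1\}$ (so contributes $(1+z)^M$), and compute the bivariate series $W(z,u)$ via the truncated partial-sum recurrence $w_{p+1}(u)=\frac{w_p(u)}{1-u}-\frac{C_pu^{p+1}}{1-u}$; I checked that this yields $W(z,u)=\frac{z(1-uC(uz))}{1-u-z}$ and that $1+W(z,1+z)$ simplifies to the stated OGF (and your quadratic $2zA^2-(1+2z)A+(1+z)=0$ is indeed satisfied). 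The two derivations hinge on the same combinatorial fact --- your substitution $u=1+z$ is the paper's $C(z+z^2)$ in disguise --- but they buy different things: your argument is more self-contained and avoids introducing an auxiliary family of paths, while the paper's bijection $\varphi$ is reused to track the statistic $\dist$ (Theorem~\ref{thm:A071356}) and generalizes to the multi-marked paths of Section~\ref{sec:unimodal}. The only ingredient you invoke without proof is that weakly increasing inversion sequences of length $p$ are counted by $C_p$, which is standard and appears as Proposition~\ref{prop:A000108}; it would be worth a citation or a one-line proof in a final write-up.
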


\medskip

The paper is organized as follows. Section~\ref{sec:gen_equiv} is devoted to the classification of consecutive patterns of relations into generalized Wilf equivalence classes, as given in Definition~\ref{def:gen_Wilf_equiv}, proving Theorem~\ref{EquivIneq}. We also provide a bijective proof of Corollary~\ref{BaxterandPudwell}. In Section~\ref{sec:enumerative_results}, we prove the enumerative results summarized in Table~\ref{tab3}, as well as Theorem~\ref{carlaConjecture}. Finally, in Section~\ref{sec:unimodal} we generalize the method used in the proof of Theorem~\ref{carlaConjecture} and apply it to enumerate different types of unimodal inversion sequences.

\section{Generalized Wilf Equivalences}\label{sec:gen_equiv}

\subsection{Proof of Theorem~\ref{EquivIneq}} In this section we will find all the generalized Wilf equivalences between consecutive patterns of relations. We start proving part~(v) of Theorem~\ref{EquivIneq}. Occurrences of $\left(\underline{>,=}\right)$ and $\left(\underline{=,>}\right)$ are simply occurrences of $\underline{100}$ and $\underline{110}$, respectively. Hence, the equivalence $\left(\underline{>,=}\right)\stackrel{ss}{\sim}\left(\underline{=,>}\right)$ is a restatement of the equivalence $\underline{100}\stackrel{ss}{\sim}\underline{110}$ (defined to mean that the number of inversion sequences of any given length with occurrences in any prescribed positions is the same for both patterns), which is proved in~\cite[Proposition 3.12]{AuliElizalde}. We repeat the proof here because the same method will be useful in proving several equivalences between consecutive patterns of relations.

First, we introduce some notation. Given $R_{1},R_2\in\{\leq,\geq,<,>,=,\neq\}$ and $e\in\bI_{n}$, define
\[
\Em\left(\left(\underline{R_{1},R_{2}}\right),e\right)=\left\{i:e_{i}e_{i+1}e_{i+2}\text{ is an occurrence of }\left(\underline{R_{1},R_{2}}\right)\right\}.
\]
Note that, by definition, $\left(\underline{R_{1},R_{2}}\right)\stackrel{ss}{\sim}\left(\underline{R'_{1},R'_{2}}\right)$ if and only if
\[
\left|\left\{e\in\bI_{n}:\Em\left(\left(\underline{R_{1},R_{2}}\right),e\right)= S\right\}\right|=\left|\left\{e\in\bI_{n}:\Em\left(\left(\underline{R'_{1},R'_{2}}\right),e\right)= S\right\}\right|
\]
for all $n$ and all $S\subseteq[n]$.
The next lemma, which is analogous to~\cite[Lemma 3.11]{AuliElizalde}, has a straightforward proof using the principle of inclusion-exclusion.

\begin{lem}\label{lem:inclusion_exlusion_consec_relations} Let $\left(\underline{R_{1},R_{2}}\right)$ and $\left(\underline{R'_{1},R'_{2}}\right)$ be two consecutive patterns of relations such that
\[
\left|\left\{e\in\bI_{n}:\Em\left(\left(\underline{R_{1},R_{2}}\right),e\right)\supseteq S\right\}\right|=\left|\left\{e\in\bI_{n}:\Em\left(\left(\underline{R'_{1},R'_{2}}\right),e\right)\supseteq S\right\}\right|
\]
for all positive integers $n$ and all $S\subseteq [n]$. Then $\left(\underline{R_{1},R_{2}}\right)\stackrel{ss}{\sim}\left(\underline{R'_{1},R'_{2}}\right)$.
\end{lem}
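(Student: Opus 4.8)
The plan is to use Möbius inversion over the Boolean lattice of subsets of positions, turning the ``containment'' counts supplied by the hypothesis into the ``exact'' counts that characterize super-strong Wilf equivalence. Fix a positive integer $n$. For a consecutive pattern of relations $P=\left(\underline{R_{1},R_{2}}\right)$ and a set $S\subseteq[n]$, I would write
\[
f_{P}(S)=\left|\left\{e\in\bI_{n}:\Em(P,e)\supseteq S\right\}\right|
\]
for the quantity appearing in the hypothesis, and
\[
g_{P}(S)=\left|\left\{e\in\bI_{n}:\Em(P,e)=S\right\}\right|
\]
for the quantity appearing in the characterization of $\stackrel{ss}{\sim}$ recorded just before the lemma. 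The goal is to deduce $g_{P}=g_{P'}$ from $f_{P}=f_{P'}$, where $P'=\left(\underline{R'_{1},R'_{2}}\right)$.

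First I would note that every $e\in\bI_{n}$ has a unique embedding set $\Em(P,e)$, and that $\Em(P,e)\supseteq S$ holds precisely when this unique set is some $T$ with $T\supseteq S$. Partitioning according to the value of $T$ gives the identity
\[
f_{P}(S)=\sum_{T\supseteq S}g_{P}(T),
\]
a finite sum over subsets $T\subseteq[n]$. This exhibits $f_{P}$ as the zeta transform of $g_{P}$ in the superset direction on the Boolean lattice, so Möbius inversion on that lattice recovers
\[
g_{P}(S)=\sum_{T\supseteq S}(-1)^{|T|-|S|}f_{P}(T).
\]
Applying the hypothesis $f_{P}(T)=f_{P'}(T)$ termwise on the right-hand side yields $g_{P}(S)=g_{P'}(S)$ for all $S\subseteq[n]$ and all $n$, which is exactly the condition $P\stackrel{ss}{\sim}P'$.

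I expect no genuine obstacle here; the argument is a textbook inclusion-exclusion on a finite Boolean lattice. The only point deserving a moment of care is bookkeeping the direction of the order relation: the sums run over supersets $T\supseteq S$ rather than subsets, so that the exponent of $(-1)$ in the Möbius function must be $|T|-|S|$. One may also observe that embedding sets are contained in $\{1,\dots,n-2\}$, so that $f_{P}(S)=g_{P}(S)=0$ whenever $S\not\subseteq\{1,\dots,n-2\}$, which keeps all sums finite and makes the statement for general $S\subseteq[n]$ consistent.
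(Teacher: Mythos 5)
Your proof is correct and follows exactly the route the paper intends: the paper states that the lemma ``has a straightforward proof using the principle of inclusion-exclusion'' and omits the details, which are precisely your zeta/M\"obius inversion of $f_P(S)=\sum_{T\supseteq S}g_P(T)$ on the Boolean lattice. Nothing further is needed.
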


Now we are ready to prove~Theorem~\ref{EquivIneq}(v).

\begin{prop}\label{prop:100_and_110}
The patterns $\left(\underline{>,=}\right)$ and $\left(\underline{=,>}\right)$ are super-strongly Wilf equivalent.
\end{prop}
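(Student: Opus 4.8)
The plan is to invoke Lemma~\ref{lem:inclusion_exlusion_consec_relations}, which reduces the super-strong Wilf equivalence to proving that
\[
\left|\{e\in\bI_n : \Em((\underline{>,=}),e)\supseteq S\}\right| = \left|\{e\in\bI_n : \Em((\underline{=,>}),e)\supseteq S\}\right|
\]
for every $n$ and every $S\subseteq[n]$. Write $A_S$ and $B_S$ for the two sets. Since an occurrence of $(\underline{>,=})$ at position $i$ forces $e_i>e_{i+1}=e_{i+2}$ (a reduction $\underline{100}$) while an occurrence of $(\underline{=,>})$ forces $e_i=e_{i+1}>e_{i+2}$ (a reduction $\underline{110}$), I would first observe that if $S$ contains two consecutive integers $i,i+1$ then both $A_S$ and $B_S$ are empty: in the first case positions $i$ and $i+1$ demand simultaneously $e_{i+1}=e_{i+2}$ and $e_{i+1}>e_{i+2}$, and symmetrically in the second. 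Thus it suffices to treat sets $S$ containing no two consecutive integers.

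For such $S$, I would group its elements into maximal runs $i,\,i+2,\,i+4,\dots,i+2(k-1)$ in which consecutive members differ by exactly $2$; the triples belonging to different runs occupy disjoint blocks of positions, and every position lying in no triple is entirely free (with $j$ choices for $e_j$). The constraints of a single run of length $k$ chain together: in $A_S$ they read $e_i>e_{i+1}=e_{i+2}>e_{i+3}=e_{i+4}>\cdots$, so that the block $i,\dots,i+2k$ carries a strictly decreasing profile $v_0>v_1>\cdots>v_k\ge 0$ placed as $v_0,v_1,v_1,v_2,v_2,\dots,v_k,v_k$ (value $v_{\lceil t/2\rceil}$ at offset $t$); in $B_S$ the same constraints read $e_i=e_{i+1}>e_{i+2}=e_{i+3}>\cdots$, placing the same profile as $v_0,v_0,v_1,v_1,\dots,v_{k-1},v_{k-1},v_k$ (value $v_{\lfloor t/2\rfloor}$ at offset $t$).

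I would then define the bijection $A_S\to B_S$ run by run, keeping the same decreasing profile $(v_0,\dots,v_k)$ and merely switching the layout from the ``ceiling'' placement to the ``floor'' placement (and acting as the identity on all free positions); this is visibly invertible. The one substantive point --- and the step I expect to be the main obstacle --- is checking that the output is a genuine inversion sequence, i.e.\ that $0\le e'_p<p$ is preserved. The map only increases entries, and only at odd offsets $t=2j-1$, where the value rises from $v_j$ to $v_{j-1}$ at position $i+2j-1$; here the strict decrease of the profile together with the validity of the input (which guarantees $v_{j-1}<i+2j-3$ for $j\ge 2$, and $v_0<i$ for $j=1$) yields $v_{j-1}<i+2j-1$, so the bound holds. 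The inverse only decreases entries and hence trivially stays within range, so the map is a genuine bijection.

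This establishes $|A_S|=|B_S|$, and Lemma~\ref{lem:inclusion_exlusion_consec_relations} then gives $(\underline{>,=})\stackrel{ss}{\sim}(\underline{=,>})$. Equivalently, one may bypass the bijection and count directly: a run starting at $i$ contributes $\binom{i}{k+1}$ to either side, since for both layouts the only binding inversion-sequence inequality is $v_0<i$ (the strict decrease of the profile makes all lower bounds on the remaining entries automatic), and a free position $j$ contributes the factor $j$; multiplying these factors gives the same total for $A_S$ and $B_S$.
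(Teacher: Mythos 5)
Your proposal is correct and follows essentially the same route as the paper: both invoke Lemma~\ref{lem:inclusion_exlusion_consec_relations} and then exhibit a bijection between the two ``$\Em\supseteq S$'' sets, and your run-by-run switch from the ceiling layout to the floor layout is exactly the paper's map $\Psi_S$ (which simply sets $e_j=e'_{j-1}$ when $j-1\in S$), with the same verification that only the increased entries need the bound $e_p<p$ checked. Your closing direct count via $\binom{i}{k+1}$ per run is a valid alternative the paper does not include, but it does not change the substance of the argument.
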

\begin{proof} By Lemma~\ref{lem:inclusion_exlusion_consec_relations}, it suffices to show that, for all positive integers $n$ and all $S\subseteq [n]$,
\[
\left|\left\{e\in\bI_{n}:\Em\left(\left(\underline{=,>}\right),e\right)\supseteq S\right\}\right|=\left|\left\{e\in\bI_{n}:\Em\left(\left(\underline{>,=}\right),e\right)\supseteq S\right\}\right|.
\]
To show this, we construct a bijection
\[\Phi_{S}:\left\{e'\in\bI_{n}:\Em\left(\left(\underline{=,>}\right),e'\right)\supseteq S\right\}\rightarrow\left\{e\in\bI_{n}:\Em\left(\left(\underline{>,=}\right),e\right)\supseteq S\right\}
\]
that replaces the occurrences of $\left(\underline{=,>}\right)$ (equivalently, of $\underline{110}$) in $e$ in positions $S$ with occurrences of $\left(\underline{>,=}\right)$ (equivalently, of $\underline{100}$). For $e\in\bI_{n}$ with $\Em\left(\left(\underline{=,>}\right),e'\right)\supseteq S$, define $\Phi_{S}(e)=e'$ by setting
\[
  e'_{j} = \begin{cases}
      e_{j+1} & \text{if } j-1\in S,\\
      e_{j} & \text{otherwise,}
      \end{cases}
\]
for $1\le j\le n$.  The sequence $e'$ is an inversion sequence because if $j-1\in S$, then $e'_j=e_{j+1}<e_j<j$.

To see that $\Phi_{S}$ is  a bijection, we describe its inverse $\Psi_{S}$. For $e'\in\bI_{n}$ with $\Em\left(\left(\underline{>,=}\right),e\right)\supseteq S$, define $\Psi_{S}(e')=e$ by
\[
  e_{j} = \begin{cases}
      e'_{j-1} & \text{if } j-1\in S,\\
      e'_{j} & \text{otherwise,}
      \end{cases}
\]
for $1\le j\le n$. Note that $e\in\bI_n$ because if $j-1\in S$, then $e_j=e'_{j-1}<j-1<j$.
Since no two occurrences of $\left(\underline{=,>}\right)$ (respectively, $\left(\underline{>,=}\right)$) can overlap in more than one entry, the maps $\Phi_{S}$ and $\Psi_{S}$ are inverses of each other.
\end{proof}

The next result proves Theorem~\ref{EquivIneq}(iii). It again relies on Lemma~\ref{lem:inclusion_exlusion_consec_relations}.

\begin{prop}\label{prop:sergi_equivalences_consec_relations_1} The patterns $\left(\underline{\geq,=}\right)$ and $\left(\underline{=,\geq}\right)$ are super-strongly Wilf equivalent.
\end{prop}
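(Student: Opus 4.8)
Following the template of Proposition~\ref{prop:100_and_110}, the plan is to invoke Lemma~\ref{lem:inclusion_exlusion_consec_relations} and build, for each $n$ and each $S\subseteq[n]$, a bijection
\[
\Phi_S:\left\{e\in\bI_n:\Em\left(\left(\underline{=,\geq}\right),e\right)\supseteq S\right\}\longrightarrow\left\{e'\in\bI_n:\Em\left(\left(\underline{\geq,=}\right),e'\right)\supseteq S\right\}.
\]
First I would record the occurrence types: $i\in\Em\left(\left(\underline{=,\geq}\right),e\right)$ means $e_i=e_{i+1}\geq e_{i+2}$ (patterns $\underline{000}$ and $\underline{110}$), whereas $i\in\Em\left(\left(\underline{\geq,=}\right),e'\right)$ means $e'_i\geq e'_{i+1}=e'_{i+2}$ (patterns $\underline{000}$ and $\underline{100}$).

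The crucial difference from Proposition~\ref{prop:100_and_110}, and what I expect to be the main obstacle, is that here occurrences \emph{can} overlap in two entries: since $e_{i+1}=e_{i+2}$ already forces $e_{i+1}\geq e_{i+2}$, positions $i$ and $i+1$ are both occurrences of $\left(\underline{=,\geq}\right)$ exactly when $e_i=e_{i+1}=e_{i+2}\geq e_{i+3}$. Consequently the ``shift by one entry'' map of Proposition~\ref{prop:100_and_110} is no longer well defined (it would fail to reproduce every prescribed occurrence), so I would instead organize the argument around the maximal runs of consecutive integers in $S$. For a maximal run $[a,b]\subseteq S$, the condition $\supseteq S$ constrains the entries in positions $a,\dots,b+2$, and chaining the defining relations shows it forces the plateau $e_a=\dots=e_{b+1}\geq e_{b+2}$ in the $\left(\underline{=,\geq}\right)$ case and the mirror structure $e'_a\geq e'_{a+1}=\dots=e'_{b+2}$ in the $\left(\underline{\geq,=}\right)$ case. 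In both cases the admissible configurations are parametrized by the same pair of a high and a low value with $0\le\text{low}\le\text{high}\le a-1$.

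I would then define $\Phi_S$ to fix the two boundary entries $e_a$ and $e_{b+2}$ of every run and overwrite the interior entries $e_{a+1},\dots,e_{b+1}$ with the right-boundary value $e_{b+2}$; its inverse fills those interiors with the left-boundary value instead. The point that dissolves the overlap obstacle is that distinct runs meet in at most one shared endpoint, which is a boundary of both runs and hence left untouched, while the interiors being rewritten are pairwise disjoint and disjoint from every boundary; thus the shared boundaries of overlapping runs never receive conflicting instructions. What remains is routine verification, which I would keep brief: that $\Phi_S$ outputs a genuine inversion sequence (the overwritten values are at most the high value $\le a-1$, hence below their position index), that it lands in the target set with $\supseteq S$, and that $\Phi_S$ and its inverse are mutually inverse. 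Finally I would note that the case $b=a$ recovers precisely the single-entry replacement of Proposition~\ref{prop:100_and_110}, so this argument is a genuine generalization that accounts for the new overlapping occurrences.
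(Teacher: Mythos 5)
Your proposal is correct and follows essentially the same route as the paper: the same reduction via Lemma~\ref{lem:inclusion_exlusion_consec_relations}, the same decomposition of $S$ into maximal blocks of consecutive integers, and the same bijection that fixes the two boundary entries of each block and overwrites the interior plateau with the right-boundary value (with the inverse using the left-boundary value). Your extra remarks on why the single-shift map of Proposition~\ref{prop:100_and_110} fails here and on adjacent blocks sharing a boundary position are sound but only make explicit what the paper's block-wise definition already handles implicitly.
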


\begin{proof}
Let $n$ be a positive integer and $S\subseteq [n]$. By Lemma~\ref{lem:inclusion_exlusion_consec_relations}, it suffices to construct a bijection
\[
\Phi_{S}:\left\{e\in\bI_{n}:\Em\left(\left(\underline{=,\geq}\right),e\right)\supseteq S\right\}\rightarrow\left\{e'\in\bI_{n}:\Em\left(\left(\underline{\geq,=}\right),e'\right)\supseteq S\right\}.
\]
We can write $S$ uniquely as a disjoint union of {\em blocks}, which we define as maximal subsets whose entries are consecutive. Explicitly, write $S=\bigsqcup_{j=1}^{m}B_{j}$, where $B_{j}=\left\{i_{j},i_{j}+1,\ldots,i_{j}+l_{j}\right\}$, with $l_{j}\geq 0$ and $i_j+l_j+1<i_{j+1}$ for all $j$.

Let $e\in\bI_{n}$ be such that $\Em\left(\left(\underline{=,\geq}\right),e\right)\supseteq S$. Then, for each $1\leq j\leq m$,
we have $$e_{i_{j}}=e_{i_{j}+1}=\ldots =e_{i_{j}+l_{j}+1}\ge e_{i_{j}+l_{j}+2}.$$
Define $\Phi_S(e)=e'$ by setting
\[
  e'_{i} = \begin{cases}
      e_{i_{j}+l_{j}+2} & \text{if } i-1\in B_{j}\text{ for some }j\text{ and }e_{i_{j}+l_{j}+1}>e_{i_{j}+l_{j}+2},\\
      e_{i} & \text{otherwise,}
      \end{cases}
\]
for $1\le i\le n$, as described schematically in Figure~\ref{fig:Scheme_iii}.
In other words, we have
$$e_{i_{j}}=\ e'_{i_{j}}\ge e'_{i_{j}+1}=\ldots =e'_{i_{j}+l_{j}+1}= e'_{i_{j}+l_{j}+2}\ = e_{i_{j}+l_{j}+2}$$
for each $j$, and so $\Em\left(\left(\underline{\geq,=}\right),e'\right)\supseteq S$. Additionally, if $i$ is such that $e'_i\neq e_i$, then $e'_{i}=e_{i_{j}+l_{j}+2}\le e_{i}<i$. Hence $e'\in\bI_{n}$.

\begin{figure}[htb]
	\centering
	\includegraphics[scale=0.7]{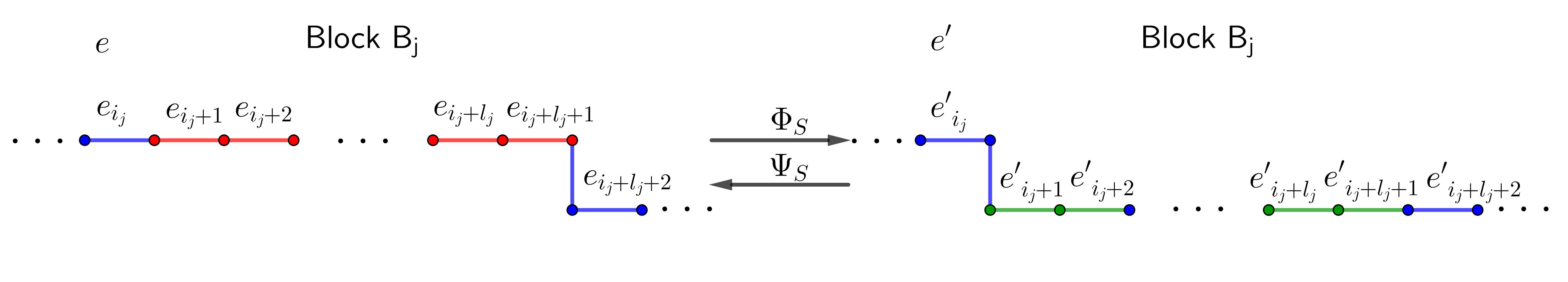}
\caption{Schematic diagram of the behavior of $\Phi_{S}$ and $\Psi_{S}$ from Proposition~\ref{prop:sergi_equivalences_consec_relations_1}.}\label{fig:Scheme_iii}
\end{figure}

To see that $\Phi_S$ is a bijection, we describe its inverse map $\Psi_S$ as follows. Given $e'\in\bI_{n}$ such that $\Em\left(\left(\underline{\geq,=}\right),e'\right)\supseteq S$, let $\Psi_{S}\left(e'\right)=e$, where
\[
  e_{i} = \begin{cases}
      e'_{i_{j}} & \text{if } i-1\in B_{j}\text{ for some }j\text{ and }e'_{i_{j}}>e'_{i_{j}+1},\\
      e'_{i} & \text{otherwise,}
      \end{cases}
\]
for $1\le i\le n$. If $i$ is such that $e_i\neq e'_i$, then $e_{i}=e'_{i_{j}}<i_{j}<i$, and so $e\in\bI_{n}$. Additionally, $\Em\left(\left(\underline{=,\geq}\right),e\right)\supseteq S$. By construction, the maps $\Phi_{S}$ and $\Psi_{S}$ are inverses of each other.
\end{proof}

Next we prove Theorem~\ref{EquivIneq}(iv), using a similar argument.

\begin{prop}\label{prop:sergi_equivalences_consec_relations_2} The patterns $\left(\underline{\geq,>}\right)$ and $\left(\underline{>,\geq}\right)$ are super-strongly Wilf equivalent.
\end{prop}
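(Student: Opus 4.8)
The plan is to prove Proposition~\ref{prop:sergi_equivalences_consec_relations_2} by the same inclusion-exclusion strategy used in the previous two propositions. By Lemma~\ref{lem:inclusion_exlusion_consec_relations}, it suffices to construct, for each positive integer $n$ and each $S\subseteq[n]$, a bijection
\[
\Phi_{S}:\left\{e\in\bI_{n}:\Em\left(\left(\underline{>,\geq}\right),e\right)\supseteq S\right\}\rightarrow\left\{e'\in\bI_{n}:\Em\left(\left(\underline{\geq,>}\right),e'\right)\supseteq S\right\}.
\]
As before, I would decompose $S$ into its maximal blocks of consecutive integers, $S=\bigsqcup_{j=1}^{m}B_{j}$ with $B_{j}=\{i_{j},i_{j}+1,\ldots,i_{j}+l_{j}\}$. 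The key observation is that requiring $\Em\left(\left(\underline{>,\geq}\right),e\right)\supseteq B_{j}$ forces a chain of relations among the entries $e_{i_{j}},e_{i_{j}+1},\ldots,e_{i_{j}+l_{j}+2}$: each consecutive triple satisfies $>$ then $\geq$, which pins down exactly where the strict inequalities and equalities sit within the block.

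The heart of the argument is to understand this chain structure on each block and to design $\Phi_{S}$ so that it rearranges the entries to convert a $\left(\underline{>,\geq}\right)$-chain into a $\left(\underline{\geq,>}\right)$-chain while preserving the multiset of values (or at least preserving membership in $\bI_{n}$). Concretely, for a block of length $l_{j}+1$ the relevant entries $e_{i_{j}}>e_{i_{j}+1}\geq e_{i_{j}+2}>\cdots$ form an alternating pattern of strict and weak descents determined by parity of position within the block; for the target pattern $\left(\underline{\geq,>}\right)$ the roles of $>$ and $\geq$ are swapped. I would define $\Phi_{S}$ by a local modification on each block analogous to the piecewise definition in Proposition~\ref{prop:sergi_equivalences_consec_relations_1}, namely cyclically shifting or reflecting the block's entries so that the first relation becomes weak and the second strict, leaving all entries outside $\bigsqcup_j(B_j\cup\{i_j+l_j+1,i_j+l_j+2\})$ untouched. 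The inverse map $\Psi_{S}$ is defined symmetrically, undoing the same local rearrangement on each block.

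The two verifications to carry out, both routine once the local map is fixed, are: first, that $\Phi_{S}(e)=e'$ is genuinely an inversion sequence, i.e. any entry that gets decreased stays in the range $0\le e'_{i}<i$ (this follows because the replaced values are copies of entries at smaller or comparable indices within the block, exactly as in the bound $e'_{i}\le e_{i}<i$ used previously); and second, that $\Em\left(\left(\underline{\geq,>}\right),e'\right)\supseteq S$, which is immediate from the way the chain of relations is transformed block by block. Since blocks of $S$ are separated by at least two indices that are not in $S$, the local modifications on distinct blocks do not interfere, so $\Phi_{S}$ and $\Psi_{S}$ are mutually inverse.

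The main obstacle I anticipate is handling the internal structure of a block correctly: unlike the $(\underline{=,\geq})$ case, where equality collapses an entire block to a single repeated value, here the entries within a block are genuinely distinct in the strict positions, so the local rearrangement must track the alternation of strict and weak relations carefully and confirm that shifting does not inadvertently create or destroy occurrences at the block boundaries $i_j+l_j+1$ and $i_j+l_j+2$. I would resolve this by stating the explicit chain of (in)equalities that holds across the full extent of each block in $e$ and in $e'$, mirroring the displayed chains in the proof of Proposition~\ref{prop:sergi_equivalences_consec_relations_1}, and then checking the boundary relations directly.
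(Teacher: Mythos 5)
Your overall architecture --- invoking Lemma~\ref{lem:inclusion_exlusion_consec_relations}, decomposing $S$ into maximal blocks, and defining a local bijection block by block --- is exactly the paper's. But the structural claim at the heart of your argument is wrong, and it is precisely the point on which the proof turns. You assert that $\Em\left(\left(\underline{>,\geq}\right),e\right)\supseteq B_{j}$ forces an \emph{alternating} pattern of strict and weak descents ``determined by parity of position within the block.'' It does not. Consecutive triples overlap in two entries, so each interior relation is imposed twice: once as the second relation of one triple ($\geq$) and once as the first relation of the next ($>$), and the strict one wins. The correct conclusion is that
\[
e_{i_{j}}>e_{i_{j}+1}>\dots>e_{i_{j}+l_{j}+1}\ge e_{i_{j}+l_{j}+2},
\]
that is, every relation across the block is strict except possibly the very last; dually, for $\left(\underline{\geq,>}\right)$ the only possibly weak relation is the very first. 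This is what makes the local map simple: the two chains differ only in where a single potential equality may sit, so $\Phi_{S}$ need only act on blocks where that equality actually occurs, sliding the repeated value from one end of the block to the other (e.g.\ when $e_{i_{j}}=e_{i_{j}+1}$, set $e'_{i}=e_{i+1}$ for $i-1\in B_{j}$ and leave all other entries alone; when the chain is entirely strict it already satisfies both patterns and nothing need be done).

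Because your description of the forced relations is incorrect, the local map you gesture at is not pinned down and would not work as described: a ``reflection'' of the block reverses a decreasing run into an increasing one, and an unconditional shift applied to a block with no equality alters values needlessly and obstructs the verification that $\Phi_{S}$ and $\Psi_{S}$ are mutually inverse. Consequently none of the checks you defer as routine (membership in $\bI_{n}$, the containment $\Em\left(\left(\underline{\geq,>}\right),e'\right)\supseteq S$, invertibility) can be carried out from your setup. Once you replace the alternation claim with the correct ``all strict except one end'' chain and make the map conditional on the presence of the equality, the remaining ingredients of your plan --- the bound $e'_{i}=e_{i+1}<e_{i}<i$, the non-interference of blocks separated by at least two indices, and the symmetric inverse --- do go through, essentially as in Proposition~\ref{prop:sergi_equivalences_consec_relations_1}.
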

\begin{proof}
Let $n$ be a positive integer and $S\subseteq [n]$. By Lemma~\ref{lem:inclusion_exlusion_consec_relations}, it is enough to describe a bijection
\[
\Phi_{S}:\left\{e\in\bI_{n}:\Em\left(\left(\underline{\geq,>}\right),e\right)\supseteq S\right\}\rightarrow\left\{e'\in\bI_{n}:\Em\left(\left(\underline{>,\geq}\right),e'\right)\supseteq S\right\}.
\]
Again, we write $S$ as a disjoint union of blocks, $S=\bigsqcup_{j=1}^{m}B_{j}$, where $B_{j}=\left\{i_{j},i_{j}+1,\ldots,i_{j}+l_{j}\right\}$, with $l_{j}\geq 0$ and $i_j+l_j+1<i_{j+1}$ for all $j$.

Given $e\in\bI_{n}$ such that $\Em\left(\left(\underline{\geq,>}\right),e\right)\supseteq S$, we have
$$e_{i_{j}}\ge e_{i_{j}+1}>\ldots >e_{i_{j}+l_{j}+1}>e_{i_{j}+l_{j}+2}$$
for each $j$. Define $\Phi_S(e)=e'$ by setting
\[
  e'_{i} = \begin{cases}
      e_{i+1} & \text{if } i-1\in B_{j}\text{ for some }j\text{ and }e_{i_{j}}=e_{i_{j}+1},\\
      e_{i} & \text{otherwise,}
      \end{cases}
\]
for $1\le i\le n$, as shown in Figure~\ref{fig:Scheme_iv}. Then
$$e'_{i_{j}}> e'_{i_{j}+1}>\ldots >e'_{i_{j}+l_{j}+1}\ge e'_{i_{j}+l_{j}+2}$$
for each $j$, and so $\Em\left(\left(\underline{>,\geq}\right),e'\right)\supseteq S$.
Additionally, since $e'_{i}=e_{i+1}<e_i<i$ whenever $e'_i\neq e_i$, we have that $e'\in\bI_{n}$.

\begin{figure}[htb]
\centering
	\includegraphics[scale=0.7]{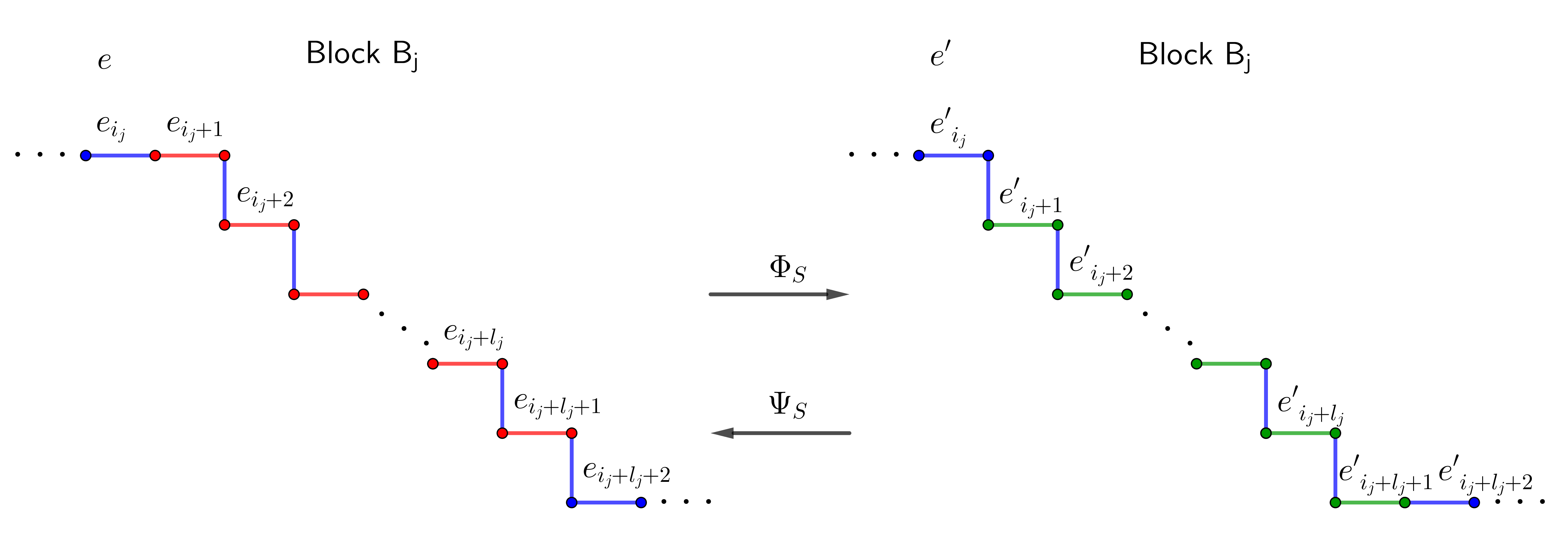}
\caption{Schematic diagram of the behavior of $\Phi_{S}$ and $\Psi_{S}$ from Proposition~\ref{prop:sergi_equivalences_consec_relations_2}.}\label{fig:Scheme_iv}
\end{figure}

To see that $\Phi_{S}$ is a bijection, note that its inverse $\Psi_{S}$ can be described as follows. Given $e'\in\bI_{n}$ such that $\Em\left(\left(\underline{>,\geq}\right),e'\right)\supseteq S$, let $\Psi_{S}\left(e'\right)=e$, where
\[
  e_{i} = \begin{cases}
      e'_{i-1} & \text{if } i-1\in B_{j}\text{ for some }j\text{ and }e'_{i_{j}+l_{j}+1}=e'_{i_{j}+l_{j}+2},\\
      e'_{i} & \text{otherwise.}
      \end{cases}
\]
This concludes the proof.
\end{proof}

Given $e=e_{1}e_{2}\ldots e_{n}\in\bI_{n}$, we define its {\em complement} to be the inversion sequence $e^C=e^{C}_{1}e^{C}_{2}\ldots e^{C}_{n}$, where $e^{C}_{i}=i-1-e_{i}$ for $1\leq i\leq n$.
This notion is useful in proving Theorem~\ref{EquivIneq}(i)(ii), which we do next.

\begin{prop}\label{prop:complements_equivalences_consec_relations} The following equivalences hold:
\begin{multicols}{2}
 \begin{itemize}[itemsep=1ex,leftmargin=1.5cm]
\item[(i$_1$)] $\left(\underline{\geq,<}\right)\stackrel{ss}{\sim}\left(\underline{<,\geq}\right)$.
\item[(ii)] $\left(\underline{\geq,\geq}\right)\stackrel{ss}{\sim}\left(\underline{<,<}\right)$.
\end{itemize}
\end{multicols}
\end{prop}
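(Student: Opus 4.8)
The plan is to prove both equivalences simultaneously using the complement involution $e\mapsto e^{C}$ introduced just above. The crucial observation is that complementation does not merely reverse the order relations between consecutive entries; because the subtracted offset $i-1$ grows with the position, a weak inequality turns into a \emph{strict} one of the opposite direction. Concretely, I would first establish the following one-line fact: for every $e\in\bI_{n}$ and every index $k$,
\[
e^{C}_{k+1}-e^{C}_{k}=\bigl(k-e_{k+1}\bigr)-\bigl(k-1-e_{k}\bigr)=1+(e_{k}-e_{k+1}),
\]
so that $e_{k}\ge e_{k+1}$ if and only if $e^{C}_{k}<e^{C}_{k+1}$, and dually $e_{k}<e_{k+1}$ if and only if $e^{C}_{k}\ge e^{C}_{k+1}$. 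In other words, complementation swaps the relations $\ge$ and $<$ on each adjacent pair of entries.

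Next I would record that $e\mapsto e^{C}$ is an involution of $\bI_{n}$: the inequalities $0\le e_{i}\le i-1$ give $0\le e^{C}_{i}\le i-1$, so $e^{C}\in\bI_{n}$, and $(e^{C})^{C}_{i}=i-1-(i-1-e_{i})=e_{i}$. Applying the relation-swap to the two adjacent pairs inside a window $e_{i}e_{i+1}e_{i+2}$ then shows that $i$ is an occurrence of the left-hand pattern in $e$ if and only if $i$ is an occurrence of the right-hand pattern in $e^{C}$, position by position. For part~(ii), $e_{i}\ge e_{i+1}\ge e_{i+2}$ becomes $e^{C}_{i}<e^{C}_{i+1}<e^{C}_{i+2}$, so $\Em\bigl((\underline{\ge,\ge}),e\bigr)=\Em\bigl((\underline{<,<}),e^{C}\bigr)$; for part~(i$_1$), $e_{i}\ge e_{i+1}$ and $e_{i+1}<e_{i+2}$ becomes $e^{C}_{i}<e^{C}_{i+1}$ and $e^{C}_{i+1}\ge e^{C}_{i+2}$, so $\Em\bigl((\underline{\ge,<}),e\bigr)=\Em\bigl((\underline{<,\ge}),e^{C}\bigr)$.

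Finally, since complementation is a bijection on $\bI_{n}$ that sends the occurrence set of one pattern exactly onto that of the other, for every $n$ and every $S\subseteq[n]$ it restricts to a bijection between $\{e:\Em((\underline{\ge,\ge}),e)=S\}$ and $\{e':\Em((\underline{<,<}),e')=S\}$ (and similarly for (i$_1$)). By the characterization of super-strong Wilf equivalence in terms of $\Em(\cdot)=S$, this yields both claimed equivalences directly, without even invoking Lemma~\ref{lem:inclusion_exlusion_consec_relations}. I do not anticipate a genuine obstacle here: the entire argument rests on the single computation above, and the only point requiring care is the sign bookkeeping that makes $\ge$ map to the \emph{strict} relation $<$ rather than to $\le$ — this is precisely what pairs $(\underline{\ge,\ge})$ with $(\underline{<,<})$ and $(\underline{\ge,<})$ with $(\underline{<,\ge})$, rather than with patterns involving $\le$ or $>$.
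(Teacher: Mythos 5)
Your proposal is correct and follows essentially the same route as the paper: both use the complement involution $e\mapsto e^{C}$ together with the observation that $e_{k}\ge e_{k+1}$ if and only if $e^{C}_{k}<e^{C}_{k+1}$, so that occurrences of one pattern map exactly onto occurrences of the other in the same positions. Your extra remarks (the explicit difference computation, the check that $e^{C}\in\bI_{n}$, and the observation that Lemma~\ref{lem:inclusion_exlusion_consec_relations} is not needed) are all consistent with, and slightly more detailed than, the paper's argument.
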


\begin{proof}
To prove part~(i$_1$), note that $e_{i}\geq e_{i+1}$ if and only if $e^{C}_{i}<e^{C}_{i+1}$. Indeed, the first equality is equivalent to
$i-1-e^C_{i}\ge i-e^C_{i+1}$, which is in turn equivalent to the second. We deduce that
$$e_{i}\geq e_{i+1}<e_{i+2}\quad\text{ if and only if }\quad e^{C}_{i}<e^{C}_{i+1}\geq e^{C}_{i+2}.$$

It follows that the involution $e\rightarrow e^{C}$ replaces occurrences of $\left(\underline{\geq,<}\right)$ by occurrences of $\left(\underline{<,\geq}\right)$, and vice versa.
We conclude that $\left(\underline{\geq,<}\right)$ and $\left(\underline{<,\geq}\right)$ are super-strongly Wilf equivalent.

A very similar argument proves part (ii).
\end{proof}

The next proposition proves the other equivalence in Theorem~\ref{EquivIneq}(i). Corollary~\ref{cor:Wilf-strongWilf} is an immediate consequence.

\begin{prop}\label{prop:wilf_equiv_equivalences_consec_relations} The patterns $\left(\underline{<,\geq}\right)$ and  $\left(\underline{\neq,\geq}\right)$ are Wilf equivalent, but not strongly Wilf equivalent.
\end{prop}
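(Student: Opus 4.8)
The plan is to treat the two assertions separately, both of them resting on one elementary observation: at any position $i$ of an inversion sequence $e$, we have $e_i\neq e_{i+1}$ exactly when $e_i<e_{i+1}$ or $e_i>e_{i+1}$, and these alternatives are mutually exclusive. Consequently, for every $e\in\bI_n$,
\[
\Em\!\left(\left(\underline{\neq,\geq}\right),e\right)=\Em\!\left(\left(\underline{<,\geq}\right),e\right)\,\sqcup\,\Em\!\left(\left(\underline{>,\geq}\right),e\right),
\]
a disjoint union. Thus the whole difference between the two patterns is governed by occurrences of $\left(\underline{>,\geq}\right)$, and both parts of the proof reduce to understanding those.

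For the Wilf equivalence I would actually prove the stronger statement that the two avoidance sets coincide, $\bI_n\!\left(\underline{<,\geq}\right)=\bI_n\!\left(\underline{\neq,\geq}\right)$; equality of cardinalities is then immediate. One inclusion is free: an occurrence of $\left(\underline{<,\geq}\right)$ is an occurrence of $\left(\underline{\neq,\geq}\right)$, so avoiding the latter forces avoiding the former, giving $\bI_n\!\left(\underline{\neq,\geq}\right)\subseteq\bI_n\!\left(\underline{<,\geq}\right)$. For the reverse inclusion, the displayed identity shows it suffices to prove the structural claim that any $e$ containing $\left(\underline{>,\geq}\right)$ must also contain $\left(\underline{<,\geq}\right)$. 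The argument I have in mind: suppose $e_i>e_{i+1}\geq e_{i+2}$, so that $e_i\geq 1$; since $e_1=0$, there is at least one ascent among positions $1,\dots,i-1$. Letting $j\le i-1$ be the largest index with $e_j<e_{j+1}$, maximality of $j$ together with $e_i>e_{i+1}$ makes $e_{j+1}\geq e_{j+2}\geq\dots\geq e_{i+1}$ a non-increasing block, whence $e_j<e_{j+1}\geq e_{j+2}$ is an occurrence of $\left(\underline{<,\geq}\right)$ at position $j$. Hence avoiding $\left(\underline{<,\geq}\right)$ already precludes $\left(\underline{>,\geq}\right)$, yielding $\bI_n\!\left(\underline{<,\geq}\right)\subseteq\bI_n\!\left(\underline{\neq,\geq}\right)$ and therefore equality.

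To show the patterns are \emph{not} strongly Wilf equivalent, I would exhibit one length at which the statistics counting occurrences have different distributions; $n=4$ already works. Summing the displayed identity over all $e\in\bI_4$ gives
\[
\sum_{e\in\bI_4}\left|\Em\!\left(\left(\underline{\neq,\geq}\right),e\right)\right|=\sum_{e\in\bI_4}\left|\Em\!\left(\left(\underline{<,\geq}\right),e\right)\right|+\sum_{e\in\bI_4}\left|\Em\!\left(\left(\underline{>,\geq}\right),e\right)\right|,
\]
and the last sum is strictly positive: the sequence $0100\in\bI_4$ has $e_2e_3e_4=100$ as an occurrence of $\left(\underline{>,\geq}\right)$ (in fact the only such occurrence in $\bI_4$). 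If the two patterns were strongly Wilf equivalent, the distributions of the number of occurrences would agree term by term, forcing in particular their first moments — the two total occurrence counts above — to be equal, contradicting the strict inequality. A direct enumeration of $\bI_4$ confirms the phenomenon concretely: for $\left(\underline{<,\geq}\right)$ there are $8$ sequences with no occurrence and $16$ with exactly one, whereas for $\left(\underline{\neq,\geq}\right)$ there are $8$ with none, $15$ with exactly one, and $1$ with two — agreeing in the zero column, as the Wilf equivalence demands, but differing thereafter.

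The main obstacle is the structural claim used in the Wilf-equivalence step, that avoidance of $\left(\underline{<,\geq}\right)$ rules out occurrences of $\left(\underline{>,\geq}\right)$; everything else is bookkeeping built on the disjoint-union identity. Once that claim is established, both the (perhaps surprising) equality of the avoidance sets and the failure of strong equivalence follow with essentially no computation, and the single sequence $0100$ isolates exactly where the finer statistic separates the two patterns.
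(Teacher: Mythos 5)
Your proof is correct and follows essentially the same route as the paper: the reverse inclusion is established by the identical argument (take the largest $j<i$ with $e_j<e_{j+1}$ and observe that $e_je_{j+1}e_{j+2}$ is then an occurrence of $\left(\underline{<,\geq}\right)$), and the failure of strong equivalence is witnessed by the same sequence $0100\in\bI_4$. Your packaging via the disjoint-union identity for $\Em$ and the first-moment comparison is a slight reorganization, but the substance matches the paper's proof.
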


\begin{proof} To prove that $\left(\underline{<,\geq}\right)\sim\left(\underline{\neq,\geq}\right)$, we will show that, in fact, $\bI_{n}\left(\underline{<,\geq}\right)=\bI_{n}\left(\underline{\neq,\geq}\right)$.
Occurrences of $\left(\underline{<,\geq}\right)$ are occurrences of one of the following consecutive patterns: $\underline{010}$, $\underline{011}$, $\underline{021}$, $\underline{120}$. Similarly, occurrences of $\left(\underline{\neq,\geq}\right)$ are occurrences of one of the same four patterns, together with $\underline{100}$ or $\underline{210}$.
We deduce that $\bI_{n}\left(\underline{\neq,\geq}\right)\subseteq\bI_{n}\left(\underline{<,\geq}\right)$.

Suppose that $e\in\bI_{n}\left(\underline{<,\geq}\right)$ has an occurrence $e_{i}e_{i+1}e_{i+2}$ of $\left(\neq,\geq\right)$. Then it must be an occurrence of $\underline{100}$ or $\underline{210}$, so $e_{i}>e_{i+1}\ge 0$.
Let $j$ be the largest index such that $1\le j<i$ and $e_j<e_{j+1}$. Such a $j$ must exist because $e_{1}=0$ and $e_i>0$.
But then $e_j<e_{j+1}\ge e_{j+2}$, that is, $e_{j}e_{j+1}e_{j+2}$ is an occurrence of $\left(\underline{<,\geq}\right)$, which is a contradiction. We conclude that $\bI_{n}\left(\underline{<,\geq}\right)=\bI_{n}\left(\underline{\neq,\geq}\right)$.

To show that $\left(\underline{<,\geq}\right)$ and  $\left(\underline{\neq,\geq}\right)$ are not strongly Wilf equivalent, it is enough to look at the number of occurrences of these patterns in inversion sequences of length $4$. There is an inversion sequence in $\bI_{4}$ with two occurrences of $\left(\underline{\neq,\geq}\right)$, namely $0100$, but none with two occurrences of $\left(\underline{<,\geq}\right)$.
\end{proof}

\begin{proof}[Proof of Theorem~\ref{EquivIneq}]
Propositions \ref{prop:100_and_110}, \ref{prop:sergi_equivalences_consec_relations_1}, \ref{prop:sergi_equivalences_consec_relations_2}, \ref{prop:complements_equivalences_consec_relations} and \ref{prop:wilf_equiv_equivalences_consec_relations} prove all the equivalences listed. A brute force computation of the first 10 terms of the sequences $\left|\bI_{n}\left(\underline{R_{1},R_{2}}\right)\right|$ for $R_{1},R_2\in\{\leq,\geq,<,>,=,\neq\}$ confirms that there are no other Wilf equivalences between such consecutive patterns of relations, showing that the list is complete.
\end{proof}

\subsection{An application to  permutation patterns}

Inversion sequences avoiding the patterns $\left(\underline{\geq,>}\right)$ and $\left(\underline{>,\geq}\right)$ are closely related to permutations avoiding vincular patterns. We will exploit this connection to deduce Corollary~\ref{BaxterandPudwell} from Theorem~\ref{EquivIneq}(iv), using the following lemma.

\begin{lem}\label{lem:A200403} Let $\pi\in S_{n}$ and $e=\Theta(\pi)$. Then:
\begin{enumerate}[label=(\alph*),itemsep=1ex,leftmargin=1.5cm]
\item $e_{i}\geq e_{i+1}$ if and only if $\pi_{i}<\pi_{i+1}$;
\item $e_{i}> e_{i+1}$ if and only if there exists $j<i$ such that $\pi_{i}<\pi_{j}<\pi_{i+1}$.
\end{enumerate}
\end{lem}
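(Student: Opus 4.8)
The plan is to work directly from the definition of the bijection $\Theta$ in~\eqref{eq:theta_bijection}, comparing the two counts $e_i$ and $e_{i+1}$ by tracking how each position $j<i$ contributes to them. Writing $e_{i+1}=\bigl|\{j\le i:\pi_j>\pi_{i+1}\}\bigr|$ and peeling off the boundary index $j=i$, I would record
\[
e_{i+1}=\bigl|\{j<i:\pi_j>\pi_{i+1}\}\bigr|+[\pi_i>\pi_{i+1}],
\]
where $[\,\cdot\,]$ denotes the Iverson bracket of the enclosed condition. The whole lemma then reduces to understanding the single quantity $e_{i+1}-e_i$, and since the entries of $\pi$ are pairwise distinct, every comparison below is strict.

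First I would handle the ascent case $\pi_i<\pi_{i+1}$. Here the boundary term vanishes, and $\{j<i:\pi_j>\pi_{i+1}\}\subseteq\{j<i:\pi_j>\pi_i\}$, the difference of these sets being exactly $\{j<i:\pi_i<\pi_j<\pi_{i+1}\}$ (distinctness lets one drop the non-strict endpoint). This yields
\[
e_i-e_{i+1}=\bigl|\{j<i:\pi_i<\pi_j<\pi_{i+1}\}\bigr|\ge 0.
\]
In the descent case $\pi_i>\pi_{i+1}$ the inclusion reverses and the boundary term equals $1$, so $e_{i+1}-e_i=1+\bigl|\{j<i:\pi_{i+1}<\pi_j<\pi_i\}\bigr|\ge 1$. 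Together these two displays prove part~(a): $e_i\ge e_{i+1}$ holds precisely when $\pi_i<\pi_{i+1}$.

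Part~(b) should then fall out of the same computation. Since $e_i>e_{i+1}$ forces $e_i\ge e_{i+1}$, part~(a) already guarantees $\pi_i<\pi_{i+1}$, so the ascent display applies and shows that $e_i>e_{i+1}$ if and only if the set $\{j<i:\pi_i<\pi_j<\pi_{i+1}\}$ is nonempty, which is exactly the asserted condition. In the complementary case $\pi_i>\pi_{i+1}$ both sides of the equivalence are false --- the left by part~(a), the right because the interval $(\pi_i,\pi_{i+1})$ is empty --- so the statement holds trivially.

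I do not expect a real obstacle here; the only care needed is the clean bookkeeping of the boundary index $j=i$ and the repeated use of distinctness of the $\pi_j$ to replace $\pi_j\le\pi_{i+1}$ by $\pi_j<\pi_{i+1}$, so that the difference set is identified and counted without ambiguity.
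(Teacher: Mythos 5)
Your proof is correct and follows essentially the same route as the paper: both arguments compare the sets $\{j<i:\pi_j>\pi_i\}$ and $\{j\le i:\pi_j>\pi_{i+1}\}$ directly from the definition of $\Theta$ and identify their difference as $\{j<i:\pi_i<\pi_j<\pi_{i+1}\}$ (or its mirror in the descent case). The only cosmetic difference is that you record the exact identity $e_i-e_{i+1}=\bigl|\{j<i:\pi_i<\pi_j<\pi_{i+1}\}\bigr|$ in the ascent case, from which both parts follow at once, whereas the paper phrases the same comparison via inclusions of the sets $E_i$ and a short contradiction for part~(b).
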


\begin{proof}
Let $E_{i}=\left\{j\in[n]:j<i\textnormal{ and }\pi_{j}>\pi_{i}\right\}$, so that the entries of $e$ satisfy $e_{i}=\left|E_{i}\right|$.

To prove (a), first suppose that $\pi_{i}<\pi_{i+1}$. Then every $j<i+1$ such that $\pi_{j}>\pi_{i+1}$ satisfies $j<i$ and $\pi_{j}>\pi_{i}$. It follows that $E_{i+1}\subseteq E_{i}$, and so $e_{i}\geq e_{i+1}$.
Conversely, suppose that $\pi_{i}>\pi_{i+1}$. Then every $j<i$ such that $\pi_{j}>\pi_{i}$ satisfies $\pi_{j}>\pi_{i+1}$, so $E_{i}\subseteq E_{i+1}$. Furthermore, $i\in E_{i+1}\backslash E_{i}$, so this inclusion is strict and, consequently, $e_{i}<e_{i+1}$.

To prove (b), suppose that $e_{i}> e_{i+1}$. Then $\pi_{i}<\pi_{i+1}$  by part (a), and so $E_{i+1}\subseteq E_{i}$. If for every $j<i$ we had either $\pi_{j}<\pi_{i}$ or $\pi_{i+1}<\pi_{j}$, then $E_{i}\subseteq E_{i+1}$, from where $E_{i}=E_{i+1}$ and $e_{i}=e_{i+1}$, which is a contradiction. Hence, there exists $j<i$ such that $\pi_{i}<\pi_{j}<\pi_{i+1}$.

Conversely, if $j<i$ is such that $\pi_{i}<\pi_{j}<\pi_{i+1}$, then $E_{i+1}\subseteq E_{i}$ and $j\in E_{i}\backslash E_{i+1}$, so this inclusion is strict. Thus, $e_{i}>e_{i+1}$.
\end{proof}

The following result connects the vincular permutation patterns $\underline{124}3$ and $\underline{421}3$ to the consecutive patterns of relations $\left(\underline{>,\geq}\right)$ and $\left(\underline{\geq,>}\right)$, respectively. In the proof we will use the next two definitions.
For a permutation $\pi\in S_{n}$, define its {\em reverse} $\pi^{R}$ to be the permutation with entries $\pi^{R}_{i}=\pi_{n+1-i}$, and its {\em reverse-complement} $\pi^{RC}$ to be the permutation such that $\pi^{RC}_{i}=n+1-\pi^{R}_{i}$ for all~$i$.

\begin{prop}\label{prop:A200403}
Let $\pi\in S_{n}$ and $e=\Theta(\pi)$. Then:
\begin{enumerate}[label=(\alph*),itemsep=1ex,leftmargin=1.5cm]
\item $\pi$ avoids the vincular pattern $2\underline{134}$ if and only if $e$ avoids the consecutive pattern of relations $\left(\underline{>,\geq}\right)$. Consequently, $\left|S_{n}\left(\underline{124}3\right)\right|=\left|S_{n}\left(2\underline{134}\right)\right|=\left|\bI_{n}\left(\underline{>,\geq}\right)\right|$.
\item $\pi$ avoids the vincular pattern $3\underline{124}$ if and only if $e$ avoids the consecutive pattern of relations $\left(\underline{\geq,>}\right)$. Consequently, $\left|S_{n}\left(\underline{421}3\right)\right|=\left|S_{n}\left(3\underline{124}\right)\right|=\left|\bI_{n}\left(\underline{\geq,>}\right)\right|$.
\end{enumerate}
\end{prop}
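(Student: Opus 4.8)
The plan is to prove part (a) by a direct translation of the avoidance conditions under the bijection $\Theta$, using Lemma~\ref{lem:A200403} to convert the relations between consecutive entries of $e$ into relations between consecutive entries of $\pi$, and then to reduce the two chains of equalities of cardinalities to known symmetries. The first observation I would record is that $\pi$ avoids the vincular pattern $2\underline{134}$ precisely when there is no position $i$ together with an earlier index $k<i$ such that the adjacent pair $\pi_i,\pi_{i+1}$ plays the role of the underlined ``$1$'' and ``$3$'' with a third larger entry $\pi_{i+2}$ serving as ``$4$'', and $\pi_k$ serving as the leading ``$2$''. Concretely, an occurrence of $2\underline{134}$ consists of an index $i$ and an index $k<i$ with $\pi_i<\pi_k<\pi_{i+1}<\pi_{i+2}$. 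Recognizing this is the crux of part (a).

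\emph{First I would establish the key equivalence.} I claim $e$ contains $\left(\underline{>,\geq}\right)$ at position $i$ if and only if $\pi$ has an occurrence of $2\underline{134}$ using the adjacent block $\pi_i\pi_{i+1}\pi_{i+2}$. Indeed, by Lemma~\ref{lem:A200403}(b), $e_i>e_{i+1}$ is equivalent to the existence of $j<i$ with $\pi_i<\pi_j<\pi_{i+1}$; and by Lemma~\ref{lem:A200403}(a), $e_{i+1}\geq e_{i+2}$ is equivalent to $\pi_{i+1}<\pi_{i+2}$. Combining these, $e_i>e_{i+1}\geq e_{i+2}$ holds if and only if there is some $j<i$ with $\pi_i<\pi_j<\pi_{i+1}<\pi_{i+2}$, which is exactly an occurrence of $2\underline{134}$ with underlined block in positions $i,i+1,i+2$. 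Since every occurrence of $2\underline{134}$ has its underlined entries in three consecutive positions, summing over $i$ shows that $\pi$ avoids $2\underline{134}$ if and only if $e$ avoids $\left(\underline{>,\geq}\right)$, giving $\left|S_n\left(2\underline{134}\right)\right|=\left|\bI_n\left(\underline{>,\geq}\right)\right|$.

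\emph{Next I would close the chain of equalities.} It remains to see $\left|S_n\left(\underline{124}3\right)\right|=\left|S_n\left(2\underline{134}\right)\right|$, which follows from a trivial symmetry of vincular patterns: applying the reverse-complement operation $\pi\mapsto\pi^{RC}$ is a bijection on $S_n$ that carries occurrences of $\underline{124}3$ to occurrences of $2\underline{134}$, because reverse-complement reverses the order of positions (turning a trailing singleton into a leading one and an underlined prefix into an underlined suffix) and reverses the relative order of values (sending the value pattern $1243$ to $2134$). Thus $\pi$ avoids $\underline{124}3$ iff $\pi^{RC}$ avoids $2\underline{134}$, yielding the middle equality and completing part (a). For part (b), I would run the identical argument with the inequalities adjusted: Lemma~\ref{lem:A200403}(a) gives $e_i\geq e_{i+1}\Leftrightarrow\pi_i<\pi_{i+1}$ and part (b) gives $e_{i+1}>e_{i+2}\Leftrightarrow$ the existence of $j<i+1$ with $\pi_{i+1}<\pi_j<\pi_{i+2}$, which translates $e_i\geq e_{i+1}>e_{i+2}$ into an occurrence of $3\underline{124}$ in positions $i,i+1,i+2$; the reverse-complement symmetry again supplies $\left|S_n\left(\underline{421}3\right)\right|=\left|S_n\left(3\underline{124}\right)\right|$.

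\emph{The main obstacle} I anticipate is bookkeeping in the key equivalence of part (b), where the ``$>$'' relation sits in the second slot: one must be careful that the witnessing index $j$ from Lemma~\ref{lem:A200403}(b) applied to the pair $(i+1,i+2)$ satisfies $j<i+1$ but may or may not equal $i$, and one should check that $\pi_i<\pi_{i+1}$ (from $e_i\geq e_{i+1}$) forces the role assignment to yield the value pattern $3\underline{124}$ rather than some other relative order. Verifying that the leading entry indeed lands below the underlined trio in the right way, and that no spurious occurrences are created or destroyed, is the only genuinely delicate point; everything else is a routine application of the two lemmas and the reverse-complement symmetry.
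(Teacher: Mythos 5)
Your argument for part (a), and the translation step in part (b), is essentially identical to the paper's proof: both reduce the vincular-pattern condition to the existence of $j<i$ with $\pi_i<\pi_j<\pi_{i+1}<\pi_{i+2}$ (resp.\ $\pi_i<\pi_{i+1}<\pi_j<\pi_{i+2}$), apply Lemma~\ref{lem:A200403} entrywise, and finish with a trivial symmetry of $S_n$. The delicate point you flag in part (b) --- that the witness $j$ for $e_{i+1}>e_{i+2}$ cannot equal $i$, because $e_i\ge e_{i+1}$ forces $\pi_i<\pi_{i+1}$ while $j=i$ would require $\pi_{i+1}<\pi_i$ --- is real, resolves exactly as you suggest, and is the same tacit observation the paper relies on.

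The one genuine slip is the symmetry you invoke to close part (b). Reverse-complement does \emph{not} carry $\underline{421}3$ to $3\underline{124}$: applying $\pi\mapsto\pi^{RC}$ to the value word $4213$ gives $2431$, so reverse-complement identifies $S_{n}\left(\underline{421}3\right)$ with $S_{n}\left(2\underline{431}\right)$, a different pattern. The symmetry you need is plain reversal $\pi\mapsto\pi^{R}$, which sends $4213$ to $3124$ and turns the underlined prefix into an underlined suffix, giving $\left|S_{n}\left(\underline{421}3\right)\right|=\left|S_{n}\left(3\underline{124}\right)\right|$; this is what the paper uses. (Your use of reverse-complement in part (a) is correct, since $1243$ does map to $2134$ under that operation.) The fix is a one-word change, but as written the final equality in part (b) is justified by the wrong bijection.
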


\begin{proof}
The permutation $\pi$ contains the vincular pattern $2\underline{134}$ if and only if there exist indices $j<i$ such that $\pi_i<\pi_j<\pi_{i+1}<\pi_{i+2}$. By Lemma~\ref{lem:A200403}, these inequalities are equivalent to $e_i>e_{i+1}\ge e_{i+2}$, namely, to $e$ having an occurrence of $\left(\underline{>,\geq}\right)$ in position $i$. It follows that $\pi$ avoids $2\underline{134}$ if and only if $e$ avoids $\left(\underline{>,\geq}\right)$. Since the map $\pi\rightarrow\pi^{RC}$ induces a bijection between $S_{n}\left(\underline{124}3\right)$ and $S_{n}\left(2\underline{134}\right)$, this proves part~(a).

Similarly, $\pi$ contains the vincular pattern $3\underline{124}$ if and only if there exist indices $j<i$ such that $\pi_i<\pi_{i+1}<\pi_j<\pi_{i+2}$. By Lemma~\ref{lem:A200403}, this is equivalent $e$ having an occurrence of $\left(\underline{\ge,>}\right)$ in position $i$. Thus, $\pi$ avoids $3\underline{124}$ if and only if $e$ avoids $\left(\underline{\geq,>}\right)$. Since the map $\pi\rightarrow\pi^{R}$ induces a bijection between $S_{n}\left(\underline{421}3\right)$ and $S_{n}\left(3\underline{124}\right)$, part (b) follows.
\end{proof}

 From Theorem~\ref{EquivIneq}(iv) and Proposition~\ref{prop:A200403}, we deduce that $\left|S_{n}\left(\underline{124}3\right)\right|=\left|S_{n}\left(\underline{421}3\right)\right|$, which proves Corollary~\ref{BaxterandPudwell}. This result was originally conjectured by
Baxter and Pudwell~{\cite[Conjecture~17]{BaxterPudwell}},
and later proved by Baxter and Shattuck~{\cite[Corollary~11]{BaxterShattuck}} and by Kasraoui~\cite[Corollary~1.9(a)]{Kasraoui}, who showed that the vincular permutation patterns $\underline{124}3$ and $\underline{421}3$ are, in fact, strongly Wilf equivalent.

However, none of these proofs provides a simple bijection between $S_{n}\left(\underline{124}3\right)$ and $S_{n}\left(\underline{421}3\right)$. Next we present such a bijection. Its easiest description is at the level of inversion sequences. Note that, even though we proved in Proposition~\ref{prop:sergi_equivalences_consec_relations_2} that $\left(\underline{\geq,>}\right)$ and $\left(\underline{>,\geq}\right)$ are super-strongly Wilf equivalent, our proof did not give a bijection between the corresponding pattern-avoiding sets.

\begin{prop}\label{prop:A200403_biject}
For $n\geq 1$, there is an explicit bijection $\Phi:\bI_{n}\left(\underline{\geq,>}\right)\to\bI_{n}\left(\underline{>,\geq}\right)$ with the property that if $e'=\Phi(e)$, then $e'_n=e_n$.
\end{prop}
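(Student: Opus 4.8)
The plan is to make the bijection completely explicit by reading off the structure of each class from its \emph{maximal weakly decreasing runs}. Write $e=e_1\dots e_n$ and call a block of consecutive positions $a,a+1,\dots,b$ a \emph{run} if $e_a\ge e_{a+1}\ge\dots\ge e_b$ and it is maximal, i.e.\ $a=1$ or $e_{a-1}<e_a$, and $b=n$ or $e_b<e_{b+1}$. Every inversion sequence decomposes uniquely into such runs separated by ascents. The first step is a structural lemma: $e$ contains $\left(\underline{\geq,>}\right)$ precisely when some run has a strict descent $e_k>e_{k+1}$ that is not its first step (for then $e_{k-1}\ge e_k>e_{k+1}$ is an occurrence), so $e\in\bI_{n}\left(\underline{\geq,>}\right)$ if and only if each run has the \emph{peak-then-constant} shape $e_a\ge e_{a+1}=\dots=e_b$. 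Symmetrically, $e'\in\bI_{n}\left(\underline{>,\geq}\right)$ if and only if each run has the \emph{constant-then-drop} shape $e'_a=\dots=e'_{b-1}\ge e'_b$.

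With this in hand I would define $\Phi$ run by run. For a run of $e\in\bI_{n}\left(\underline{\geq,>}\right)$ at positions $a,\dots,b$, set $e'_i=e_a$ for $a\le i\le b-1$ and $e'_b=e_b$; in words, fix the two endpoints and raise every interior entry from the constant value to the peak value $e_a$, turning a peak-then-constant run into a constant-then-drop run with the same endpoints. The inverse $\Psi$ acts on a constant-then-drop run of $e'\in\bI_{n}\left(\underline{>,\geq}\right)$ by setting $e_a=e'_a$ and $e_i=e'_b$ for $a+1\le i\le b$, lowering the interior entries to the drop value $e'_b$.

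The verifications, in order, are: (i) $\Phi(e)\in\bI_n$, since the only entries that change sit at interior positions $i\ge a+1$ and receive the value $e_a<a\le i$; (ii) $\Phi$ fixes the first and last entry of every run, so no boundary inequality $e_b<e_{b+1}$ is disturbed and the ascent positions, hence the entire run decomposition, are preserved; (iii) by the structural lemma $\Phi(e)\in\bI_{n}\left(\underline{>,\geq}\right)$, the map $\Psi$ is well defined by the symmetric argument, and $\Psi\circ\Phi=\mathrm{id}$, $\Phi\circ\Psi=\mathrm{id}$ hold run by run; and (iv) the last run ends at position $n$ with $e'_n=e_n$ (its drop value is fixed by $\Phi$), which is exactly the claimed property. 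Note that runs of length $1$ or $2$ are fixed by both maps, consistent with their shape being simultaneously peak-then-constant and constant-then-drop.

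The main obstacle is the numeric validity check for $\Psi$: after lowering the interior entries to $e'_b$, one must confirm $e'_b<i$ for each interior position $i\ge a+1$. This holds because $e'_b\le e'_a<a<a+1\le i$, where the first inequality uses that the run is weakly decreasing and the second is the defining constraint $e'_a<a$ of an inversion sequence; the analogous bound $e_a<a\le i$ establishes validity of $\Phi$. Apart from this inequality, every step is structural, so the argument is short once the run characterization is established.
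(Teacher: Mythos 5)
Your bijection coincides exactly with the paper's map $\Phi$: on each maximal weakly decreasing run with a strict initial descent, you replace the shape $\underline{10^{r}}$ by $\underline{1^{r}0}$ while fixing both endpoints, which is precisely how the paper defines $\Phi$, and your inverse $\Psi$ matches as well. The proof is correct and essentially the same as the paper's; your run-decomposition lemma merely repackages (somewhat more cleanly) the paper's edge-case check that no new occurrences of $\left(\underline{>,\geq}\right)$ are created at the boundaries of the modified blocks.
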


\begin{proof}
Given $e\in\bI_{n}\left(\underline{\geq,>}\right)$, define $\Phi(e)=e'$
to be the inversion sequence obtained by replacing each maximal
occurrence of a consecutive pattern $\underline{10^{r}}$ (for $r\ge2$) in $e$ with an occurrence of $\underline{1^{r}0}$.
In other words, each maximal subsequence of the form
$e_j>e_{j+1}=e_{j+2}=\dots =e_{j+r}$ (with $r\ge2$) becomes a subsequence
$e'_j=e'_{j+1}=\dots =e'_{j+r-1}>e'_{j+r}$, where $e'_j=e_j$ and $e'_{j+r}=e_{j+r}$.
Note that $\Phi$ acts on these subsequences in analogy to how the map $\Psi_{S}$, described in the schematic diagram in Figure~\ref{fig:Scheme_iii}, acted on blocks.

We now show that $e'\in\bI_{n}\left(\underline{>,\geq}\right)$. Since $e\in\bI_{n}\left(\underline{\geq,>}\right)$, it avoids $\left(\underline{>,>}\right)$ as well. Given that $\Phi$ removes all occurrences of $\left(\underline{>,=}\right)$ in $e$, it suffices to prove that $\Phi$ introduces no new occurrences of $\left(\underline{>,\geq}\right)$ at the edges of the changed subsequences; that is, that $e'_{j-1}e'_{j}e'_{j+1}$ and $e'_{j+r-1}e'_{j+r}e'_{j+r+1}$ are not occurrences of $\left(\underline{>,\geq}\right)$ for $j$ and $r$ as above.

We argue by contradiction. Suppose that $e'_{j-1}>e'_{j}\ge e'_{j+1}$. Then either $e_{j-1}=e'_{j-1}$ or $e_{j-1}=e'_{j}$. Thus, $e_{j-1}\geq e'_{j}=e_{j}>e_{j+1}$, so $e_{j-1}e_{j}e_{j+1}$ would have been an occurrence of $\left(\underline{\geq,>}\right)$.

Now, suppose that $e'_{j+r-1}>e'_{j+r}\ge e'_{j+r+1}$. Then $e_{j+r-1}=e_{j+r}=e'_{j+r}\geq e'_{j+r+1}\geq e_{j+r+1}$. However, at least one of these two inequalities must be strict because $e_{j+r}=e_{j+r+1}$ would contradict the maximality of $r$. But then $e_{j+r-1}e_{j+r}e_{j+r+1}$  would have been an occurrence of $\left(\underline{\geq,>}\right)$.

It is clear that $\Phi$ is a bijection, since the inverse map can be obtained by replacing each maximal
occurrence of a consecutive pattern $\underline{1^{r}0}$ (for $r\ge2$) with an occurrence of $\underline{10^{r}}$. Also, by construction, $\Phi$ preserves the last entry of~$e$.
\end{proof}

Combining Propositions~\ref{prop:A200403} and~\ref{prop:A200403_biject}, we see that the map $\pi\mapsto \Theta^{-1}\left(\Phi(\Theta(\pi^{RC}))\right)^R$ is a bijection between $S_{n}\left(\underline{124}3\right)$ and $S_{n}\left(\underline{421}3\right)$.

\section{Enumerative Results}\label{sec:enumerative_results}

In this section, we show that for consecutive patterns of relations $\left(\underline{R_{1},R_{2}}\right)$ in 14 of the 30 Wilf equivalence classes, the sequence $\left|\bI_{n}\left(\underline{R_{1},R_{2}}\right)\right|$ matches a sequence in~\cite{OEIS} that is known to enumerates another combinatorial object.
These results are summarized in Table~\ref{tab3}.

This leaves 16 Wilf equivalence classes for which the first few terms of the sequence $\left|\bI_{n}\left(\underline{R_{1},R_{2}}\right)\right|$ do not match any previously existing sequence in~\cite{OEIS}.
For 4 of these 16 classes, avoidance of a pattern of relations equates to avoidance of a single consecutive pattern.
Specifically, the patterns of relations $\left(\underline{<,=}\right)$, $\left(\underline{=,<}\right)$, $\left(\underline{>,=}\right)\stackrel{ss}{\sim}\left(\underline{=,>}\right)$ and $\left(\underline{>,>}\right)$ correspond to the consecutive patterns $\underline{011}$, $\underline{001}$, $\underline{100}\stackrel{ss}{\sim}\underline{110}$ and $\underline{210}$, respectively.
Recurrences for these 4 classes are known, as they were treated in the systematic study of consecutive patterns in inversion sequences~\cite{AuliElizalde}. The remaining 12 Wilf equivalence classes of consecutive patterns of relations, listed from least avoided to most avoided in inversion sequences of length~10, are:
\vspace{-6pt}
\begin{multicols}{4}
\noindent $\left(\underline{\leq,\leq}\right)$,\\
$\left(\underline{<,\neq}\right)$,\\
$\left(\underline{\neq,<}\right)$,\\
$\left(\underline{\leq,<}\right)$,\\
$\left(\underline{\neq,>}\right)$,\\
$\left(\underline{<,>}\right)$,\\
$\left(\underline{<,\leq}\right)$,\\
$\left(\underline{>,\neq}\right)$,\\
$\left(\underline{>,<}\right)$,\\
$\left(\underline{=,\leq}\right)$,\\
$\left(\underline{\leq,=}\right)$,\\
$\left(\underline{\geq,=}\right)\stackrel{ss}{\sim}\left(\underline{=,\geq}\right)$.
\end{multicols}\vspace{-5pt}
For each pattern $\left(\underline{R_{1},R_{2}}\right)$ above, even though we do not have closed formulas for $\left|\bI_{n}\left(\underline{R_{1},R_{2}}\right)\right|$, we can obtain recurrences to compute these quantities. The recurrences are similar to the one in Proposition~\ref{prop:A200403_2}, and can be proved using analogous arguments.

\subsection{The pattern $\left(\underline{\leq,\neq}\right)$}\label{subsec:A040000}

The following lemma gives a very simple description of the set $\bI_{n}\left(\underline{\leq,\neq}\right)$. The notation $a^s$ indicates repetition $s$ times of the entry $a$.

\begin{lem}\label{lem:A040000}
For $n\geq 2$,
$$\bI_{n}\left(\underline{\leq,\neq}\right)=\left\{0^{n},01^{n-1}\right\}.$$
\end{lem}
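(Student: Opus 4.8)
The plan is to characterize $\bI_n(\underline{\leq,\neq})$ directly by translating the avoidance condition into a structural constraint on the entries. Avoiding $(\underline{\leq,\neq})$ means there is no index $i$ with $e_i \leq e_{i+1}$ and $e_{i+1} \neq e_{i+2}$. Contrapositively, for every $i$ in range, whenever $e_i \leq e_{i+1}$ holds, we must have $e_{i+1} = e_{i+2}$. I would first verify the easy inclusion: both candidate sequences $0^n$ and $01^{n-1}$ indeed avoid the pattern. For $0^n$ every triple is $000$, and since $0 \leq 0$ but $0 = 0$, there is no occurrence. For $01^{n-1}$, the only places to check are triples $011$ (where $0 \leq 1$ but $1 = 1$, no occurrence) and $111$ (same reasoning); so it avoids as well.

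The substance is the reverse inclusion. I would take an arbitrary $e \in \bI_n(\underline{\leq,\neq})$ and exploit the forced start $e_1 = 0$. The key observation is that because $e_1 = 0 \leq e_2$ always holds (as $e_2 \in \{0,1\}$), the avoidance condition immediately forces $e_2 = e_3$. I would then propagate this: the idea is to show by induction that once the sequence begins to increase, it must stay constant thereafter. Concretely, I expect to argue that $e$ is \emph{weakly decreasing after its first ascent}, but the $(\underline{\leq,\neq})$ condition is even more rigid: any weak ascent $e_i \leq e_{i+1}$ forces $e_{i+1} = e_{i+2}$, which then chains forward. I would split into the two cases determined by $e_2$. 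If $e_2 = 0$, I would show inductively that all entries are $0$, yielding $0^n$; the point is that if $e_1 = \dots = e_k = 0$ and $e_{k+1} > 0$, then since $0 = e_{k-1} \leq e_k = 0$ forces $e_k = e_{k+1}$, contradicting $e_{k+1} > 0$ (handling the base case $e_1 = e_2 = 0$ separately to launch the induction). If $e_2 = 1$, then $e_1 = 0 \leq 1 = e_2$ forces $e_3 = e_2 = 1$, and I would continue inductively: given $e_i = e_{i+1} = 1$, the relation $e_i \leq e_{i+1}$ forces $e_{i+2} = e_{i+1} = 1$, so all subsequent entries equal $1$, giving $01^{n-1}$.

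The main obstacle, though it is a mild one, is organizing the induction so that the forcing propagates cleanly without gaps, particularly making sure the weak inequality $e_i \leq e_{i+1}$ is genuinely available at each step to trigger the forced equality $e_{i+1} = e_{i+2}$. The cleanest formulation I would use is: prove that in any $e \in \bI_n(\underline{\leq,\neq})$, the entries $e_2, e_3, \dots, e_n$ are all equal. This single claim, combined with $e_1 = 0$ and $e_2 \in \{0,1\}$, immediately yields the two listed sequences. To prove the claim, I would show $e_j = e_{j+1}$ for all $2 \leq j \leq n-1$ by downward-free induction anchored at the forced equality $e_2 = e_3$ (coming from $e_1 = 0 \leq e_2$): once $e_{j} = e_{j+1}$ is known, the relation $e_{j} \leq e_{j+1}$ holds trivially, so avoidance forces $e_{j+1} = e_{j+2}$, completing the induction step. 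This reduces the entire argument to a short, self-propagating chain, and no genuine case analysis beyond the value of $e_2$ is needed.
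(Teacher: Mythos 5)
Your proof is correct and rests on the same key observation as the paper's: since $e_i\leq e_{i+1}$ is guaranteed (first by $e_1=0$, then by the previously forced equality), avoidance forces $e_{i+1}=e_{i+2}$, and this propagates. The paper packages this as induction on the length $n$ (peeling off the last entry) rather than on the position $j$ within a fixed sequence, but the argument is essentially identical.
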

\begin{proof} We proceed by induction on $n$. The statement is trivially true for the base case $n=2$.
Now let $n\ge3$, suppose the statement holds for $n-1$, and let $e\in\bI_{n}\left(\underline{\leq,\neq}\right)$. Then $e_{1}e_{2}\ldots e_{n-1}\in\bI_{n-1}\left(\underline{\leq,\neq}\right)$, so either $e=0^{n-1}$ or $e=01^{n-2}$. In any case, $e_{n-2}\leq e_{n-1}$, so it must be that $e_{n-1}=e_{n}$. Hence, either $e=0^{n}$ or $e=01^{n-1}$. Since both of these inversion sequences avoid $\left(\underline{\leq,\neq}\right)$, we deduce that $\bI_{n}\left(\underline{\leq,\neq}\right)=\left\{0^{n},01^{n-1}\right\}$.
\end{proof}

It follows from Lemma~\ref{lem:A040000} that
$\left|\bI_{1}\left(\underline{\leq,\neq}\right)\right|=1$ and $\left|\bI_{n}\left(\underline{\leq,\neq}\right)\right|=2$ for $n\ge2$.

\subsection{The pattern $\left(\underline{\leq,\geq}\right)$}\label{subsec:A000027}

The next lemma provides a characterization of the set $\bI_{n}\left(\underline{\leq,\geq}\right)$ in terms of a monotonicity condition. This will be a recurrent idea in this paper, as it is often convenient to describe the inversion sequences in $\bI_{n}\left(\underline{R_{1},R_{2}}\right)$ in terms of a monotonicity or unimodal condition, in order to enumerate them.

\begin{lem}\label{lem:A000027_1}
For $n\geq 1$,
$$\bI_{n}\left(\underline{\leq,\geq}\right)=\{e\in\bI_{n} : e_2<e_3<\dots<e_n\}.$$
\end{lem}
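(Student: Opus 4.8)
The plan is to establish the set equality by proving both inclusions, each of which reduces to unwinding the avoidance condition one position at a time. Recall that $e$ avoids $\left(\underline{\leq,\geq}\right)$ precisely when there is no index $i$ with $e_i\leq e_{i+1}\geq e_{i+2}$; equivalently, for every $i$ with $1\le i\le n-2$, at least one of $e_i>e_{i+1}$ or $e_{i+1}<e_{i+2}$ holds.

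For the inclusion $\{e\in\bI_n : e_2<e_3<\dots<e_n\}\subseteq\bI_n\left(\underline{\leq,\geq}\right)$, I would argue directly. If $e$ is strictly increasing from the second entry on, then for every $i\geq 2$ we have $e_{i+1}<e_{i+2}$, so the relation $e_{i+1}\geq e_{i+2}$ fails and position $i$ is not an occurrence; and for $i=1$ the inequality $e_2<e_3$ shows that $e_2\geq e_3$ fails as well. Hence no position is an occurrence, and $e$ avoids the pattern.

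The reverse inclusion is the substantive direction, and I would prove it by induction, exploiting that $e_1=0$ for every inversion sequence. Since $0=e_1\leq e_2$, applying the avoidance condition at $i=1$ rules out $e_1>e_2$ and therefore forces $e_2<e_3$; this is the base case. For the inductive step, assuming $e_{k-1}<e_k$ for some $k$ with $3\leq k\leq n-1$, the relation $e_{k-1}\leq e_k$ together with avoidance at position $i=k-1$ again rules out $e_{k-1}>e_k$ and forces $e_k<e_{k+1}$. Chaining these implications yields $e_2<e_3<\dots<e_n$, as desired.

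The only point requiring care is the initialization of the induction: the chain of strict inequalities is ignited entirely by the structural fact $e_1=0$, which guarantees $e_1\leq e_2$ and thereby activates the avoidance condition at the first position. Beyond this, there is no genuine obstacle, only the bookkeeping of the index range $1\leq i\leq n-2$ and the verification that each intermediate comparison $e_{k-1}\leq e_k$ needed to invoke the avoidance condition is exactly what the previous step of the induction supplies.
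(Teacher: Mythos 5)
Your proof is correct and follows essentially the same route as the paper: the left-to-right inclusion is the immediate one, and the substantive inclusion is handled by induction along the sequence, ignited by $e_1=0\le e_2$ forcing $e_2<e_3$ and propagated by the fact that $e_{k-1}<e_k$ together with avoidance at position $k-1$ forces $e_k<e_{k+1}$. No issues.
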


\begin{proof}
The inclusion to the left is immediate. To prove the inclusion to the right, let $e\in\bI_{n}\left(\underline{\leq,\geq}\right)$. We will show that $e_{j}<e_{j+1}$ for $2\leq j<n$ by induction on~$j$.
For the base case $j=2$, note that $e_1=0\le e_2$ implies $e_2<e_3$, otherwise $e_1e_2e_3$ would be an occurrence of $\left(\underline{\leq,\geq}\right)$. Now suppose that $e_{j}<e_{j+1}$ for some $2\le j<n-1$. Then $e_{j+1}<e_{j+2}$, because otherwise $e_je_{j+1}e_{j+2}$ would be an occurrence of $\left(\underline{\leq,\geq}\right)$.
\end{proof}

\begin{prop}\label{prop:A000027}
For $n\geq 1$, $\left|\bI_{n}\left(\underline{\leq,\geq}\right)\right|=n$.
\end{prop}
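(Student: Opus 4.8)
By Lemma~\ref{lem:A000027_1}, the set $\bI_{n}\left(\underline{\leq,\geq}\right)$ consists exactly of those inversion sequences $e$ whose tail $e_2e_3\dots e_n$ is strictly increasing. The plan is to count such sequences directly by exploiting this rigid structure, together with the bounds $0\le e_i<i$ imposed on every inversion sequence.

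First I would observe that $e_1=0$ is forced, so an element of $\bI_{n}\left(\underline{\leq,\geq}\right)$ is determined by the strictly increasing sequence $e_2<e_3<\dots<e_n$ subject to $0\le e_i<i$ for each $i$. The main idea is that this rigidity leaves very little freedom: I would argue that the strict inequalities $e_2<e_3<\dots<e_n$ combined with the upper bounds $e_i<i$ force $e_i$ to take one of only a few possible values. Concretely, since $e_2\ge 0$ and the sequence increases by at least $1$ at each step, we get $e_i\ge e_2+(i-2)\ge i-2$; on the other hand $e_i\le i-1$. Hence each $e_i$ with $i\ge 2$ satisfies $e_i\in\{i-2,i-1\}$, and moreover once $e_i=i-1$ occurs, strictness forces $e_{i+1}=i$ (the only value exceeding $i-1$ that is below $i+1$), so the sequence must ``lock in'' to the maximal value $e_j=j-1$ for all subsequent $j$.

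The cleanest way to finish is therefore to show that such a sequence is completely determined by the \emph{first} index $k$ (if any) at which $e_k=k-1$ rather than $e_k=k-2$. I would verify that for each $k$ with $2\le k\le n$ there is exactly one valid sequence of this type, namely $e_i=i-2$ for $2\le i<k$ and $e_i=i-1$ for $k\le i\le n$, plus the single additional sequence $e_i=i-2$ for all $2\le i\le n$ in which the maximal value is never attained. That gives $n-1$ choices of $k$ together with $1$ exceptional sequence, for a total of $n$, proving $\left|\bI_{n}\left(\underline{\leq,\geq}\right)\right|=n$.

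I do not expect a serious obstacle here; the only point requiring care is making the ``lock-in'' argument airtight, i.e.\ confirming that once a strictly increasing tail touches the boundary value $i-1$ it is forced to stay on the boundary, and that the counting by the transition index $k$ is genuinely a bijection (no sequence is counted twice and none is omitted). An alternative, perhaps even shorter, route would be a direct induction on $n$ mirroring the style of Lemma~\ref{lem:A000027_1}: establish the formula for small $n$ and show that each element of $\bI_{n}\left(\underline{\leq,\geq}\right)$ either ends in $e_n=n-1$ (extending a shorter sequence) or satisfies $e_n=e_{n-1}+1<n-1$, yielding the recurrence $a_n=a_{n-1}+1$ with $a_1=1$. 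Either approach reduces the proposition to an elementary computation.
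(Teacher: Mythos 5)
Your proposal is correct and takes essentially the same route as the paper: both start from Lemma~\ref{lem:A000027_1}, use the bounds $0\le e_i<i$ to pin each entry to $\{i-2,i-1\}$, and count by the unique ``switch'' index, which is exactly the paper's count of which one of the $n$ weak inequalities $0\le e_2\le e_3-1\le\dots\le e_n-(n-2)\le 1$ is strict. Your lock-in argument and transition-index parametrization (with the exceptional never-switching sequence corresponding to the paper's $j=n$) is just a rephrasing of that same enumeration.
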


\begin{proof}
It follows from Lemma~\ref{lem:A000027_1} and the definition of inversion sequence that $\bI_{n}\left(\underline{\leq,\geq}\right)$ is the set of sequences $e_1\dots e_n$ with $0=e_1\le e_2<e_3<\dots<e_n\le n-1$. Disregarding the forced entry $e_1=0$, this condition in equivalent to \begin{equation}\label{eq:le}
0\le e_2 \le e_3-1 \le e_4-2 \le \dots \le e_n-(n-2)\le 1,
\end{equation}
and so these entries are determined by the choice of which one of the $n$ inequalities in~\eqref{eq:le} is strict. In other words, these entries are given by
$$e_i=\begin{cases} i-2 & \text{if }2\le i\le j,\\ i-1 & \text{if }j< i \le n \end{cases}$$
for some fixed $1\le j\le n$. Thus, there are $n$ such sequences.
\end{proof}

\subsection{The pattern $\left(\underline{\geq,\neq}\right)$}\label{subsec:A000124}
The next lemma characterizes inversion sequences avoiding this pattern.

\begin{lem}\label{lem:A000124}
For $n\ge1$,
$$\bI_{n}\left(\underline{\geq,\neq}\right)=\{e\in\bI_{n}:
e_{1}< e_{2}<\dots < e_{j}\geq e_{j+1}=e_{j+2}=\dots =e_{n} \text{ for some }1\leq j\leq n\}.$$
\end{lem}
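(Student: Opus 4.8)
The plan is to prove the two inclusions separately, with the forward inclusion ($\subseteq$) being the substantive one. For the easy inclusion ($\supseteq$), I would take an inversion sequence $e$ of the stated form $e_1 < \dots < e_j \geq e_{j+1} = \dots = e_n$ and verify directly that no position $i$ can host an occurrence of $\left(\underline{\geq,\neq}\right)$. I would split into cases according to where $i$ falls: if $i+1 \leq j$, then $e_i < e_{i+1}$, so the relation $\geq$ already fails; if $i = j$, then $e_i \geq e_{i+1}$ holds but $e_{i+1} = e_{i+2}$ (both entries lie in the constant tail), so the relation $\neq$ fails; and if $i \geq j+1$, then $e_i = e_{i+1} = e_{i+2}$, again killing the relation $\neq$. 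Hence every such $e$ avoids the pattern.

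For the forward inclusion, I would let $e \in \bI_{n}\left(\underline{\geq,\neq}\right)$ and define $j$ to be the length of the maximal strictly increasing prefix, that is, the largest index with $e_1 < e_2 < \dots < e_j$ (well-defined, and $j \geq 1$ since a single entry is vacuously increasing). If $j = n$, then $e$ is strictly increasing and already has the required form with empty constant tail. Otherwise $j < n$, and maximality of $j$ forces $e_j \geq e_{j+1}$, so it remains only to show that the tail $e_{j+1} e_{j+2} \dots e_n$ is constant.

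The key step, and the one requiring the most care, is propagating equality along the tail by feeding each newly established equality back into the avoidance hypothesis. Concretely, position $i = j$ satisfies $e_j \geq e_{j+1}$; since $e$ avoids $\left(\underline{\geq,\neq}\right)$, we must have $e_{j+1} = e_{j+2}$ whenever $j+2 \leq n$. I would then argue inductively: assuming $e_{j+1} = \dots = e_m$ for some $m$ with $j+2 \leq m < n$, the equality $e_{m-1} = e_m$ gives $e_{m-1} \geq e_m$, so avoidance at position $i = m-1$ forces $e_m = e_{m+1}$, extending the run. This yields $e_{j+1} = \dots = e_n$, so $e$ has the claimed form. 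I expect the only delicate points to be the boundary bookkeeping, namely the cases $j = n$ and $j+2 > n$, together with the observation that $j = 1$ consistently produces $e = 0^{n}$; these are all routine once the inductive mechanism above is in place.
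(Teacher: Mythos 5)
Your proposal is correct and follows essentially the same route as the paper: take $j$ to be the length of the maximal strictly increasing prefix, note that maximality forces $e_j\geq e_{j+1}$, and then propagate the equality $e_{j+1}=e_{j+2}=\dots=e_n$ along the tail by repeatedly applying the avoidance hypothesis. The paper compresses the induction into the phrase ``repeating the same argument'' and dismisses the reverse inclusion as straightforward, whereas you spell out both; the content is identical.
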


\begin{proof}
The inclusion to the left is straightforward. For the inclusion to the right, let $e\in\bI_{n}\left(\underline{\geq,\neq}\right)$, and let $j$ be the largest integer such that $e_{1}<e_{2}<\dots <e_{j}$.
If $j<n$, then $e_{j}\geq e_{j+1}$ by construction, and since $e$ avoids $\left(\underline{\geq,\neq}\right)$, it must be that $e_{j+1}=e_{j+2}$ (if these entries are defined). Repeating the same argument, we have $e_{j+1}=e_{j+2}=\dots =e_{n}$.
\end{proof}

Next we obtain a formula for the number of inversion sequences that avoid $\left(\underline{\geq,\neq}\right)$. This sequence
appears as A000124 in~\cite{OEIS}; it is sometimes referred to as {\em central polygonal numbers} or as the {\em lazy caterer's sequence}.

\begin{prop}
\label{prop:A000124} For $n\geq 1$, $\left|\bI_{n}\left(\underline{\geq,\neq}\right)\right| = \binom{n}{2}+1$.
\end{prop}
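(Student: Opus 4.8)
The plan is to count the set described by Lemma~\ref{lem:A000124} directly. The key observation is that the strictly increasing prefix is rigid: in any $e\in\bI_n(\underline{\geq,\neq})$, the condition $e_1<e_2<\dots<e_j$ together with $e_1=0$ and $e_i<i$ forces $e_i=i-1$ for all $1\le i\le j$. Indeed, strict integer increase gives $e_i\ge e_{i-1}+1$ and hence $e_i\ge i-1$ by induction, while the inversion sequence constraint gives $e_i\le i-1$; so equality holds throughout the prefix.

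With the prefix pinned down, the whole sequence is determined by the pair $(j,c)$, where $j$ is the length of the maximal strictly increasing prefix and $c=e_{j+1}=\dots=e_n$ is the constant tail value. First I would treat the case $j=n$ separately: this is the single sequence $e=012\cdots(n-1)$ with no tail. For each $j$ with $1\le j\le n-1$, the relation $e_j\ge e_{j+1}$ forces $0\le c\le j-1$, giving exactly $j$ admissible choices; every such choice automatically yields an inversion sequence, since $c\le j-1<i$ for each $i>j$. Summing over $j$ then gives $\left|\bI_n(\underline{\geq,\neq})\right|=1+\sum_{j=1}^{n-1}j=1+\binom{n}{2}$, which is the claimed value, and I would confirm it against the initial terms $1,2,4,7,\dots$ in Table~\ref{tab3}.

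The one point requiring care is that this parametrization be a genuine bijection rather than an overcount. I would verify that distinct pairs $(j,c)$ give distinct sequences by recovering $j$ as the length of the maximal strictly increasing prefix and $c$ as the common tail value; in particular, the boundary case $c=j-1=e_j$ (where $e_j=e_{j+1}$) still has maximal increasing prefix of length exactly $j$, since the strict increase stops at position $j+1$, and the fully increasing sequence belongs only to the $j=n$ family, as fitting it into a family with $j<n$ would demand the forbidden value $c=n-1>j-1$. This uniqueness check is the main, though minor, obstacle; once it is in place, the remaining summation is routine.
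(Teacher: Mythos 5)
Your proof is correct and is essentially the paper's argument in a different wrapper: the paper also starts from Lemma~\ref{lem:A000124}, notes that the strictly increasing prefix is forced to be $0,1,\dots,j-1$, and records the same two degrees of freedom $(e_j,e_n)=(j-1,c)$ --- only it packages them as the set $\{e_j,e_n\}\subseteq\{0,1,\dots,n-2\}$ of size at most two (with $\emptyset$ for the fully increasing sequence), yielding a bijection onto a family of cardinality $\binom{n-1}{2}+(n-1)+1=\binom{n}{2}+1$ rather than summing $1+\sum_{j=1}^{n-1}j$ directly. Your uniqueness check for the parametrization is sound and plays the same role as the paper's verification that its map $\Gamma$ has a well-defined inverse.
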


\begin{proof}
Let $\mathcal{A}_{n}$ be the collection of subsets of $\{0,1,\dots,n-2\}$ with at most two elements. Clearly, $\left|\mathcal{A}_{n}\right|=\binom{n-1}{2}+(n-1)+1=\binom{n}{2}+1$. We define a bijection $\Gamma:\bI_{n}\left(\underline{\geq,\neq}\right)\rightarrow \mathcal{A}_{n}$ by letting, for each $e\in\bI_{n}\left(\underline{\geq,\neq}\right)$,
$$\Gamma(e) = \begin{cases}
\{\max_i\{e_i\},e_n\}  & \text{if } e_{n}\neq n-1,\\
\emptyset & \text{otherwise},
    \end{cases}
$$
where $\{a,a\}$ is simply the set $\{a\}$. If $j$ is the index such that $e_{1}< e_{2}<\dots < e_{j}\geq e_{j+1}=e_{j+2}=\dots =e_{n}$,
which exists by Lemma~\ref{lem:A000124} and is unique by definition, then $e_i=i-1$ for $1\le i\le j$, so the above definition can be restated as
$\Gamma(e) = \{e_j,e_n\} $ if $j<n$ and $\Gamma(e)=\emptyset$ otherwise.

To see that $\Gamma$ is a bijection, we describe its inverse. Given any $A\in\mathcal{A}_n$, we have
$$\Gamma^{-1}(A)=\begin{cases}
012\dots abb\dots b& \text{if } A=\{a,b\} \text{ with }a> b,\\
012\dots aa\dots a& \text{if } A=\{a\},\\
012\dots(n-1) & \text{if }A=\emptyset.
    \end{cases}
$$
This concludes the proof.
\end{proof}

\begin{exa} Applying the map $\Gamma$ from the proof of Proposition~\ref{prop:A000124}  to $e=012322\in\bI_{6}\left(\underline{\geq,\neq}\right)$, we get $\Gamma(e)=\{3,2\}$. Similarly, if $e=012344\in\bI_{6}\left(\underline{\geq,\neq}\right)$, then $\Gamma(e)=\{4\}$. \end{exa}

One can also obtain the formula in Proposition~\ref{prop:A000124} as follows.
Martinez and Savage~\cite[Observation~11]{MartinezSavageII} note that the set $\bI_{n}\left(\geq,\neq,-\right)$ is characterized by the same inequalities from Lemma~\ref{lem:A000124}, so we deduce that $\bI_{n}\left(\underline{\geq,\neq}\right)=\bI_{n}\left(\geq,\neq,-\right)$. Moreover, they show in~\cite[Theorem~9]{MartinezSavageII} that $\bI_{n}\left(\geq,\neq,-\right)=\Theta\left(S_{n}(213,321)\right)$, where $\Theta$ is given by \eqref{eq:theta_bijection} and $S_{n}(213,321)$ denotes the set of permutations in $S_n$ avoiding both of the (classical) permutation patterns $213$ and $321$. It was shown by Simion and Schmidt \cite[Proposition~11]{SimionSchmidt} that
$\left|S_{n}(213,321)\right|=\binom{n}{2}+1$. We conclude that
\[
\left|\bI_{n}\left(\underline{\geq,\neq}\right)\right| = \left|\bI_{n}\left(\geq,\neq,-\right)\right| = \left|S_{n}(213,321)\right| = \binom{n}{2}+1.
\]

\subsection{The pattern $\left(\underline{\geq,\leq}\right)$}\label{subsec:A000045}

The next lemma characterizes the set $\bI_{n}\left(\underline{\geq,\leq}\right)$ as strictly unimodal inversion sequences.

\begin{lem}\label{lem:A000045}
For $n\ge1$,
$$
\bI_{n}\left(\underline{\geq,\leq}\right)=\{e\in\bI_{n}: e_{1}< e_{2}<\dots < e_{j}\geq e_{j+1}>e_{j+2}>\dots >e_{n} \text{ for some }1\leq j\leq n\}.
$$
\end{lem}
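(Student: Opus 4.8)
The plan is to prove the two inclusions separately, as in the previous lemmas of this section. The inclusion to the left is routine: if $e$ has the stated strictly unimodal form $e_{1}<\dots<e_{j}\geq e_{j+1}>\dots>e_{n}$, I would check directly that no index $i$ can satisfy both $e_{i}\geq e_{i+1}$ and $e_{i+1}\leq e_{i+2}$ simultaneously. Indeed, $e_{i}\geq e_{i+1}$ forces $i\geq j$, and in that range the form already guarantees $e_{i+1}>e_{i+2}$, so the condition $e_{i+1}\leq e_{i+2}$ fails and there is no occurrence.

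For the inclusion to the right, the first step is to restate avoidance in a convenient one-step form. Take $e\in\bI_{n}\left(\underline{\geq,\leq}\right)$. Since an occurrence of $\left(\underline{\geq,\leq}\right)$ at position $i$ is exactly the conjunction $e_{i}\geq e_{i+1}$ and $e_{i+1}\leq e_{i+2}$, avoiding the pattern is equivalent to the implication: for every $i$ with $1\leq i\leq n-2$, if $e_{i}\geq e_{i+1}$ then $e_{i+1}>e_{i+2}$.

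The crux is the following propagation statement, which I would establish by induction: once a weak descent $e_{i}\geq e_{i+1}$ appears, every later comparison is a strict descent, so that $e_{i+1}>e_{i+2}>\dots>e_{n}$. The base case is the one-step implication above; the inductive step uses that $e_{i+1}>e_{i+2}$ in particular gives $e_{i+1}\geq e_{i+2}$, so the implication applies again at index $i+1$, and so on down to $n$. I would then let $j$ be the smallest index with $e_{j}\geq e_{j+1}$ (setting $j=n$ if no such index exists, i.e.\ if $e$ is strictly increasing). By minimality of $j$ we get $e_{1}<\dots<e_{j}$, and the propagation statement applied at $i=j$ yields $e_{j+1}>\dots>e_{n}$, which together with $e_{j}\geq e_{j+1}$ is precisely the claimed form.

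The only real subtlety, and the step I would be most careful about, is the role of equality. Because $\leq$ permits $e_{i+1}=e_{i+2}$, the negation of the pattern demands the \emph{strict} inequality $e_{i+1}>e_{i+2}$ (not merely $e_{i+1}\geq e_{i+2}$) after any weak descent; it is exactly this strictness that must be threaded through the induction to produce a genuinely strictly decreasing tail rather than a weakly decreasing one. I would also dispatch the boundary cases explicitly, namely the all-increasing case $j=n$ and the vacuous ranges arising when $j+2>n$, so that the unimodal description is seen to hold for every $1\leq j\leq n$.
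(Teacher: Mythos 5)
Your proposal is correct and follows essentially the same route as the paper: both take $j$ to be the smallest index with $e_{j}\geq e_{j+1}$ (or $j=n$ if none exists), use minimality to get the strictly increasing prefix, and propagate the strict descent $e_{j+1}>e_{j+2}>\dots>e_{n}$ by repeatedly applying the avoidance condition. Your explicit attention to the strictness of the descending tail is exactly the point the paper's iteration relies on.
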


\begin{proof} The inclusion to the left is immediate, so let us prove the inclusion to the right. Suppose that  $e\in\bI_{n}\left(\underline{\geq,\leq}\right)$, and let $j$ be the smallest integer such that $e_{j}\ge e_{j+1}$, or $j=n$ if no such integer exists. Since $e$ avoids $\left(\underline{\geq,\leq}\right)$, it must be that $e_{j+1}>e_{j+2}$ (if these entries are defined). Repeating the same argument, we see that $e_{j+1}>e_{j+2}>\dots >e_{n}$. On the other hand, $e_{j-1}< e_{j}$ (if $j\ge2$) by definition of $j$. Since $e$ avoids $\left(\underline{\geq,\leq}\right)$, we must then have
$e_{1}<e_{2}<\dots <e_{j}$. We conclude that $e$ satisfies the stated unimodality condition.
\end{proof}

Let $\mathcal{C}_n$ be the set of compositions of $n$ with parts $1$ and $2$, that is, sequences $(a_1,a_2,\dots,a_j)$ such that $a_1+\dots+a_j=n$
and $a_i\in\{1,2\}$ for $1\le i\le j$. It is well-known that $|\mathcal{C}_n|=F_{n+1}$, the $(n+1)$th Fibonacci number, defined by the recurrence $F_{0}=0$, $F_{1}=1$ and $F_{n}=F_{n-1}+F_{n-2}$ for $n\geq 2$. The integer sequence sequence $F_{n}$ is listed as A000045 in~\cite{OEIS}.

\begin{prop} There is a bijective correspondence between $\bI_{n}\left(\underline{\geq,\leq}\right)$ and $\mathcal{C}_{n}$. In particular,
$\left|\bI_{n}\left(\underline{\geq,\leq}\right)\right|=F_{n+1}$.
\end{prop}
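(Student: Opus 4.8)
We must establish a bijection between $\bI_{n}\left(\underline{\geq,\leq}\right)$ and $\mathcal{C}_{n}$, the set of compositions of $n$ into parts $1$ and $2$, whence $\left|\bI_{n}\left(\underline{\geq,\leq}\right)\right|=F_{n+1}$.

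**The approach.** The plan is to exploit the structural characterization already furnished by Lemma~\ref{lem:A000045}: every $e\in\bI_{n}\left(\underline{\geq,\leq}\right)$ is strictly unimodal, meaning it strictly increases up to some peak index $j$ and then strictly decreases to the end, with the single possible flat step $e_{j}\ge e_{j+1}$ (which may be an equality) at the top. My goal is to read off from this unimodal shape exactly enough data to reconstruct $e$, and to show that data amounts to a choice of a composition of $n$ into $1$s and $2$s. The key observation is that, because of the inversion-sequence bound $0\le e_i<i$ together with strict monotonicity on each side, the entries are essentially forced once one records, for each position, whether the value increases by the minimum amount or jumps. First I would examine small cases to guess which composition should correspond to a given $e$, expecting that the parts of the composition encode the lengths or step-sizes of the ascending and descending runs.

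**Constructing the bijection.** Concretely, I would proceed as follows. Since $e_1=0$ is forced and the sequence is strictly unimodal with peak at index $j$, the ascending portion $0=e_1<e_2<\dots<e_j$ and descending portion $e_{j+1}>\dots>e_n$ are each strictly monotone integer sequences confined to $\{0,1,\dots,n-1\}$. The natural invariant to extract is the multiset of consecutive gaps: on the way up the gaps $e_{i+1}-e_i\ge 1$, and on the way down the gaps $e_i-e_{i+1}\ge 1$. One shows that the constraints $e_i<i$ together with the requirement that the sequence be a genuine inversion sequence force every gap to lie in $\{1,2\}$ (or, more precisely, force a counting relation in which the total "length" summed across the runs equals $n$). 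I would then define the map $e\mapsto(a_1,\dots,a_k)\in\mathcal{C}_n$ so that these gaps, together with the run-lengths, assemble into a composition of $n$, and describe the inverse by greedily reconstructing the unimodal sequence from a given composition, laying down unit and double steps as dictated by the parts. To verify bijectivity I would check that the reconstruction always yields a valid inversion sequence (each partial value stays below its index) and that it avoids $\left(\underline{\geq,\leq}\right)$ by virtue of being strictly unimodal, then confirm the two maps invert one another on the level of gap sequences.

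**The main obstacle.** The delicate point will be pinning down the \emph{exact} encoding: it is easy to see that strict unimodality gives a Fibonacci-flavored count, but getting a clean, position-by-position correspondence with compositions into $1$s and $2$s requires correctly handling the peak, where the relation $e_{j}\ge e_{j+1}$ permits either equality or a strict drop, and the boundary constraint $e_n\le n-1$. I expect the cleanest route is to set up a recurrence directly from Lemma~\ref{lem:A000045} —- conditioning on whether $e_{n-1}<e_n$ or $e_{n-1}\ge e_n$, which splits $\bI_{n}\left(\underline{\geq,\leq}\right)$ into two pieces sized like $\left|\bI_{n-1}\left(\underline{\geq,\leq}\right)\right|$ and $\left|\bI_{n-2}\left(\underline{\geq,\leq}\right)\right|$ —- thereby matching the Fibonacci recurrence $F_{n+1}=F_n+F_{n-1}$ and, in parallel, the recursive decomposition of a composition by its last part. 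If the explicit gap-bijection proves fiddly to state, I would fall back on this recurrence to secure the enumeration $F_{n+1}$ and then promote it to a bijection via the standard correspondence between such recurrences and compositions into $1$s and $2$s.
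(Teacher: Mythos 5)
There is a genuine gap: the invariant you propose to extract is not the right one, and the claim on which your main construction rests is false. On the ascending side the gaps carry no information at all, since $e_1=0$, strict increase, and $e_i<i$ force $e_i=i-1$ for $1\le i\le j$; and on the descending side the gaps are \emph{not} confined to $\{1,2\}$. For example, $e=01230\in\bI_{5}\left(\underline{\geq,\leq}\right)$ has descending gap $e_4-e_5=3$, and more generally $012\dots(n-2)\,0$ has a descending gap of $n-2$. So no encoding by consecutive differences can land in $\mathcal{C}_n$. The correct invariant, which your proposal never reaches, is the \emph{multiplicity of each value}: since the descending part $e_{j+1}>\dots>e_n$ is strictly decreasing and bounded above by $e_j=j-1$, each value $i-1$ with $1\le i\le j$ occurs in $e$ either once or twice, and recording these multiplicities $(a_1,\dots,a_j)$ gives a composition of $n$ into parts $1$ and $2$; the inverse places a second copy of $i-1$ at the rightmost unused position whenever $a_i=2$. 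That is the paper's bijection.

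Your fallback recurrence also does not work as stated. Conditioning on $e_{n-1}<e_n$ versus $e_{n-1}\ge e_n$ splits $\bI_{n}\left(\underline{\geq,\leq}\right)$ into pieces of sizes $1$ and $F_{n+1}-1$, not $F_n$ and $F_{n-1}$: by the unimodality characterization, $e_{n-1}<e_n$ forces the whole sequence to be strictly increasing, i.e.\ $e=012\dots(n-1)$, which is a single sequence. (Check $n=4$: the two classes have sizes $1$ and $4$, whereas $F_4=3$ and $F_3=2$.) A conditioning that does yield the Fibonacci recurrence is on the multiplicity of the peak value $j-1$ (equivalently, on whether $e_j>e_{j+1}$ or $e_j=e_{j+1}$), i.e.\ on the last part of the composition above --- but that again requires the multiplicity encoding rather than the gap one. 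As written, neither of your two routes closes.
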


\begin{proof}
Given $e\in \bI_{n}\left(\underline{\geq,\leq}\right)$, we know by Lemma~\ref{lem:A000045} that
$$e_{1}< e_{2}<\dots < e_{j}\geq e_{j+1}>e_{j+2}>\dots >e_{n}$$ for some $j$. Since $e$ is an inversion sequence, it follows that $e_{i}=i-1$ for $1\leq i\leq j$.

Define $f(e)=(a_1,a_2,\dots,a_j)$, where $a_i$ is the number of entries in $e$ that are equal to $i-1$, for $1\le i\le j$.
Note that $f(e)\in \mathcal{C}_{n}$ because, for each $1\le i\le j$, the sequence $e$ has one or two entries equal to $i-1$, and it has no entries larger than $j-1$.

Let us show that $f$ is a bijection by describing its inverse. Given $(a_1,a_2,\dots,a_j)\in \mathcal{C}_{n}$, we
recover the unique $e\in \bI_{n}\left(\underline{\geq,\leq}\right)$ such that $f(e)=(a_1,a_2,\dots,a_j)$ as follows.
First, set $e_{i}=i-1$ for $1\le i\le j$. To define the remaining entries, let $i$ range from $1$ to $j$, and whenever $a_i=2$, set the rightmost entry of $e$ that has not been defined equal to $e_i$.
\end{proof}

\begin{figure}
\begin{centering}
\includegraphics[scale=0.65]{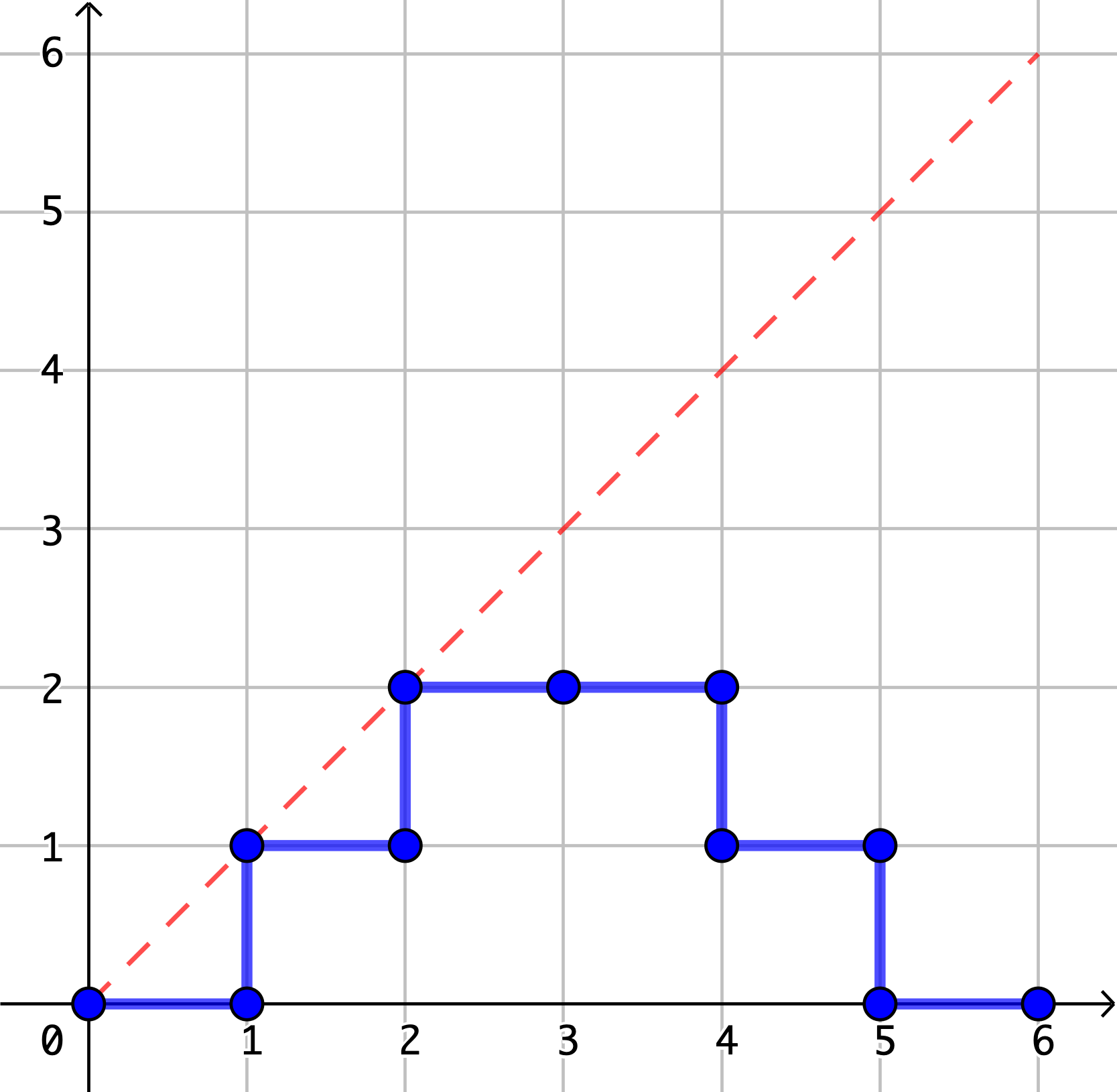}
\hspace*{1.5cm} 
\includegraphics[scale=0.65]{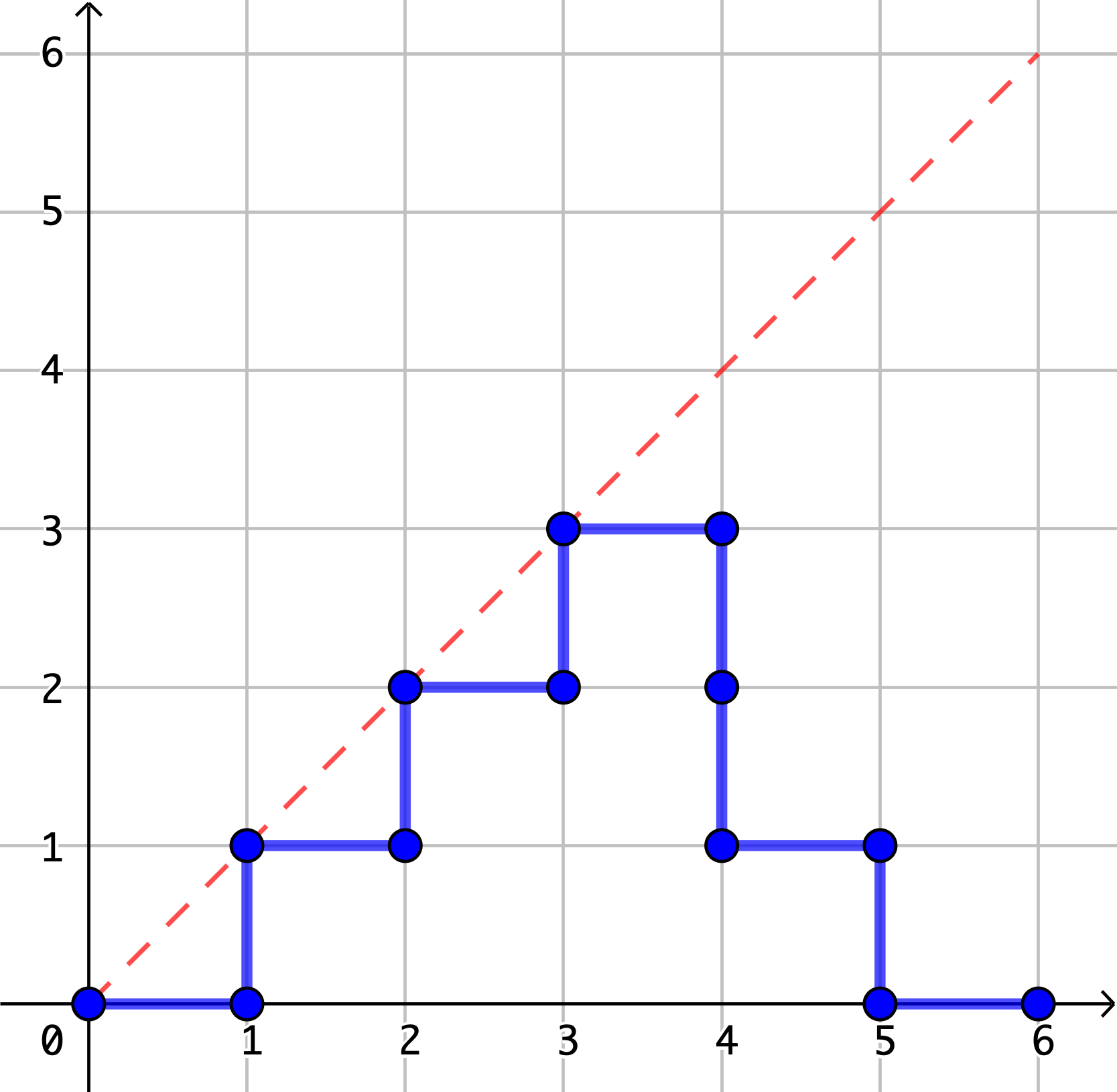}\par
\end{centering}
\caption{Two inversion sequences $e\in\bI_{6}\left(\underline{\geq,\leq}\right)$: $e=012210$ (left) and $e=012310$ (right).} \label{fig:cusp}
\end{figure}

\begin{exa} Applying the bijection $f$ from the above proof to the inversion sequences in Figure~\ref{fig:cusp}, we get $f(012210)=(2,2,2)$ and
$f(012310)=(2,2,1,1)$. \end{exa}

\subsection{The pattern $\left(\underline{\neq,\leq}\right)$}\label{subsec:A000071}
As in previous subsections, we start by characterizing the inversion sequences avoiding this pattern.

\begin{lem}\label{lem:A000071}
For $n\ge1$,
$$\bI_{n}\left(\underline{\neq,\leq}\right)=\{e\in\bI_{n}:
e_{1}= e_{2}=\dots = e_{j-1}<e_{j}>e_{j+1}>\dots >e_{n} \text{ for some }2\leq j\leq n+1\}.$$
\end{lem}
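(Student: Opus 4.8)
The plan is to mimic the structure of the previous characterization lemmas (e.g.\ Lemmas~\ref{lem:A000045} and~\ref{lem:A000124}) and prove the stated set equality by showing both inclusions, where the nontrivial direction is the inclusion to the right. The claimed description says that avoiding $\left(\underline{\neq,\leq}\right)$ forces the inversion sequence to be \emph{weakly increasing then strictly decreasing}, but with the extra rigidity that the initial weak part must in fact be \emph{constant}: we have $e_1=e_2=\dots=e_{j-1}$ followed by a strict descent $e_j>e_{j+1}>\dots>e_n$. The allowance $2\le j\le n+1$ is designed so that the two degenerate cases are included: $j=n+1$ corresponds to the purely constant sequence (which, since $e_1=0$, is $0^n$), and small $j$ corresponds to an immediate strict descent after a short constant prefix.

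First I would dispatch the inclusion to the left: if $e$ has the stated form, I must check it avoids $\left(\underline{\neq,\leq}\right)$, i.e.\ there is no $i$ with $e_i\neq e_{i+1}$ and $e_{i+1}\le e_{i+2}$. Within the constant block all consecutive pairs are equal, so $e_i\neq e_{i+1}$ can only happen at or after the peak index $j-1$; but from index $j-1$ onward the sequence is strictly decreasing, so $e_{i+1}\le e_{i+2}$ never holds there. This rules out any occurrence. For the inclusion to the right, let $e\in\bI_n\left(\underline{\neq,\leq}\right)$ and let $j$ be the smallest index with $e_{j-1}<e_j$ strictly (taking $j=n+1$ if $e$ is constant, and noting $e_1=0$ forces any first strict inequality to be an increase rather than a decrease). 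By minimality of $j$ the entries $e_1,\dots,e_{j-1}$ admit no strict increase; I would argue they admit no strict decrease either, since $e_1=0$ and the entries are nonnegative, so the only way to avoid a strict increase while starting at $0$ and never going strictly below a previously seen value would collapse them to being equal---this is the point I would state carefully, perhaps by observing directly that $e_1=e_2=\dots=e_{j-1}=0$. Then from index $j$ onward I apply the avoidance condition repeatedly: since $e_{j-1}<e_j$ we have $e_{j-1}\neq e_j$, so avoidance forces $e_j>e_{j+1}$; and once $e_j>e_{j+1}$ we again have $e_j\neq e_{j+1}$, forcing $e_{j+1}>e_{j+2}$, and so on, yielding $e_j>e_{j+1}>\dots>e_n$ by an induction identical in spirit to the one in Lemma~\ref{lem:A000045}.

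The main obstacle I anticipate is pinning down the constancy of the prefix cleanly. It is tempting to say ``no strict increase before $j$'' and stop, but avoiding $\left(\underline{\neq,\leq}\right)$ does \emph{not} by itself forbid strict decreases in the prefix---those are forbidden here only because $e_1=0$ and a strict decrease somewhere in the prefix, followed by the fact that the prefix has no strict increase, would eventually force an occurrence of the pattern at the boundary or contradict nonnegativity. So the careful step is to show that a weakly monotone-down-or-flat prefix starting at $0$ that contains no strict increase must actually be identically $0$: if some $e_k$ in the prefix were positive, then tracing back to where the value first rose above $0$ would produce a strict increase before index $j$, contradicting the choice of $j$. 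I would phrase this as: the first strict inequality in $e$ must be an increase (because $e_1=0\le e_2$ and, inductively, any earlier strict decrease is impossible without a prior strict increase to rise above $0$), which both justifies the constancy $e_1=\dots=e_{j-1}=0$ and identifies $j$ as exactly the peak position. With that settled, the strict-descent tail follows by the routine induction, completing the right inclusion and hence the lemma.
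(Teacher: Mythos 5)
Your proof is correct and follows essentially the same route as the paper: both directions are checked, and the strict descent of the tail is obtained by iterating the avoidance condition exactly as in the paper. The only difference is cosmetic: the paper defines $j$ as the largest integer with $0=e_1=\dots=e_{j-1}$ (so the constancy of the prefix and $e_{j-1}<e_j$ come for free from maximality and nonnegativity), whereas you define $j$ as the first strict ascent and then correctly supply the small extra argument that the prefix must be identically $0$.
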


\begin{proof}
The inclusion to the left is straightforward. For the inclusion to the right,
take $e\in\bI_{n}\left(\underline{\neq,\leq}\right)$ and let
$2\leq j\leq n+1$ be the largest integer such that $0=e_{1}=e_{2}=\dots =e_{j-1}$. If $j\le n$, then $e_{j-1}<e_{j}$. Furthermore, if $j\le n-1$, then $e_{j}>e_{j+1}$ because $e$ avoids the pattern $\left(\underline{\neq,\leq}\right)$. The same argument shows that $e_{j}>e_{j+1}>\dots >e_{n}$.
\end{proof}

Martinez and Savage prove in \cite[Theorem~12]{MartinezSavageII} that the inversion sequences in $\bI_{n}\left(\neq,\leq,-\right)$ are characterized by the same inequalities from Lemma~\ref{lem:A000071}, so we deduce that $\bI_{n}\left(\underline{\neq,\leq}\right)=\bI_{n}\left(\neq,\leq,-\right)$. They also show that $\left|\bI_{n}\left(\neq,\leq,-\right)\right|=F_{n+2}-1$ for $n\geq 1$.
This sequence is listed as A000071 in~\cite{OEIS}. The next corollary follows.

\begin{cor}\label{cor:A000071} For $n\geq 1$, $\left|\bI_{n}\left(\underline{\neq,\leq}\right)\right|=\left|\bI_{n}\left(\neq,\leq,-\right)\right|=F_{n+2}-1$.
\end{cor}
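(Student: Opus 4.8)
The final statement to prove is Corollary~\ref{cor:A000071}, which asserts that $\left|\bI_{n}\left(\underline{\neq,\leq}\right)\right|=\left|\bI_{n}\left(\neq,\leq,-\right)\right|=F_{n+2}-1$ for $n\geq 1$.

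The plan is to leverage the characterization already established in Lemma~\ref{lem:A000071}, which describes $\bI_{n}\left(\underline{\neq,\leq}\right)$ as the set of inversion sequences of the form $e_{1}=e_{2}=\dots=e_{j-1}<e_{j}>e_{j+1}>\dots>e_{n}$ for some $2\leq j\leq n+1$ (with all the leading equal entries forced to be $0$). The key observation is that these are exactly the defining inequalities that Martinez and Savage use to describe the classical avoidance set $\bI_{n}\left(\neq,\leq,-\right)$ in their Theorem~12. First I would cite \cite[Theorem~12]{MartinezSavageII} to record that $\bI_{n}\left(\neq,\leq,-\right)$ is characterized by these same inequalities. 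Comparing this description with Lemma~\ref{lem:A000071} then yields the set equality $\bI_{n}\left(\underline{\neq,\leq}\right)=\bI_{n}\left(\neq,\leq,-\right)$ immediately, and in particular the two sets have the same cardinality.

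The second step is purely a matter of quoting the enumeration: Martinez and Savage also establish in the same reference that $\left|\bI_{n}\left(\neq,\leq,-\right)\right|=F_{n+2}-1$ for $n\geq 1$, where $F_n$ is the Fibonacci number introduced just before the statement. Chaining the set equality with this count gives the desired formula. Since this integer sequence is recorded as A000071 in~\cite{OEIS}, I would note that reference for the reader. The entire argument is therefore a short deduction combining our Lemma~\ref{lem:A000071} with two results from~\cite{MartinezSavageII}, requiring no new bijection or recurrence.

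The main (and essentially only) obstacle is confirming that the inequality description in Lemma~\ref{lem:A000071} is literally identical to the one appearing in \cite[Theorem~12]{MartinezSavageII}; once the two characterizations are seen to coincide, both the set equality and the count follow with no further work. I expect the proof to fit in a single sentence or two, exactly mirroring the earlier deduction used for Proposition~\ref{prop:A000124} (where the analogous identification $\bI_{n}\left(\underline{\geq,\neq}\right)=\bI_{n}\left(\geq,\neq,-\right)$ was made via \cite[Observation~11]{MartinezSavageII}). This parallel structure makes the corollary a clean instance of the recurring theme that the consecutive patterns of relations studied here sometimes collapse onto the classical patterns of relations of Martinez and Savage, allowing their enumeration results to be imported directly.
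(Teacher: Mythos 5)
Your proposal matches the paper's own argument exactly: the paper likewise invokes \cite[Theorem~12]{MartinezSavageII} to identify the characterization of $\bI_{n}\left(\neq,\leq,-\right)$ with the one in Lemma~\ref{lem:A000071}, concludes $\bI_{n}\left(\underline{\neq,\leq}\right)=\bI_{n}\left(\neq,\leq,-\right)$, and then imports the count $F_{n+2}-1$ from the same source. Nothing is missing; this is the intended one-line deduction.
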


\subsection{The patterns $\left(\underline{\geq,<}\right)\stackrel{ss}{\sim}\left(\underline{<,\geq}\right)
\sim\left(\underline{\neq,\geq}\right)$}\label{subsec:A000079}
We have the following characterizations of inversion sequences avoiding these three patterns.

\begin{lem}\label{lem:A000079}
For $n\ge1$,
\begin{align*}\bI_{n}\left(\underline{<,\geq}\right)=\bI_{n}\left(\underline{\neq,\geq}\right)=\{e\in\bI_{n}:
e_{1}= e_{2}=\dots = e_{j}<e_{j+1}<\dots <e_{n}\text{ for some }1\leq j\leq n\},\\
\bI_{n}\left(\underline{\geq,<}\right)=\{e\in\bI_{n}:
e_{1}< e_{2}<\dots <e_{j}\ge e_{j+1}\ge \dots \ge e_{n}\text{ for some }1\leq j\leq n\}.
\end{align*}
\end{lem}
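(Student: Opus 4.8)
The plan is to prove each of the three set equalities by the same two-step strategy used throughout this section: the left-to-right inclusion (the stated monotone sequences avoid the pattern) is immediate by inspection, so I would focus on the right-to-left inclusion, showing that any inversion sequence avoiding the pattern must be of the claimed form. For the first line, I would begin by establishing $\bI_{n}\left(\underline{<,\geq}\right)=\bI_{n}\left(\underline{\neq,\geq}\right)$, which has in fact already been shown in Proposition~\ref{prop:wilf_equiv_equivalences_consec_relations}; I would simply cite it. So it remains to characterize $\bI_{n}\left(\underline{\neq,\geq}\right)$ (equivalently $\bI_{n}\left(\underline{<,\geq}\right)$) and, separately, $\bI_{n}\left(\underline{\geq,<}\right)$.

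For the characterization of $\bI_{n}\left(\underline{\neq,\geq}\right)$, I would take $e$ avoiding the pattern and let $j$ be the largest index with $e_{1}=e_{2}=\dots=e_{j}$ (all equal to $0$, since $e_1=0$). If $j<n$, then $e_{j}<e_{j+1}$ by maximality of $j$. I would then argue by induction that $e_{j+1}<e_{j+2}<\dots<e_{n}$: if $e_{i}<e_{i+1}$ for some $i\ge j+1$ but $e_{i+1}\ge e_{i+2}$ were to hold, then $e_{i}\neq e_{i+1}$ and $e_{i+1}\ge e_{i+2}$ would make $e_{i}e_{i+1}e_{i+2}$ an occurrence of $\left(\underline{\neq,\geq}\right)$, a contradiction. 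This forces strict increase from position $j$ onward, giving the stated form. For $\bI_{n}\left(\underline{\geq,<}\right)$, the argument is essentially dual: I would let $j$ be the smallest index with $e_{j}\ge e_{j+1}$ (or $j=n$ if none exists), note that $e_1<e_2<\dots<e_j$ by minimality, and then show $e_j\ge e_{j+1}\ge\dots\ge e_n$, since any strict ascent $e_{i}<e_{i+1}$ with $i\ge j$ would combine with the preceding relation $e_{i-1}\ge e_i$ to produce an occurrence of $\left(\underline{\geq,<}\right)$.

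I do not expect a serious obstacle here, since all three characterizations follow the familiar "locate the turning index $j$, then propagate the monotonicity by a short induction" template already used in Lemmas~\ref{lem:A000027_1}, \ref{lem:A000124}, \ref{lem:A000045} and~\ref{lem:A000071}. The only point requiring a little care is the boundary handling: making sure the induction arguments correctly accommodate the cases $j=n$ (where the sequence is monotone increasing with no descent) and the edge positions where $e_{i+2}$ may not be defined, so that the claimed ranges $1\le j\le n$ are exactly right. The mild subtlety worth flagging explicitly is that the first equality $\bI_{n}\left(\underline{<,\geq}\right)=\bI_{n}\left(\underline{\neq,\geq}\right)$ is \emph{not} a consequence of the two sets having the same cardinality (which is all Wilf equivalence would give), but is a genuine set equality already proved in Proposition~\ref{prop:wilf_equiv_equivalences_consec_relations}; I would make sure to invoke that proposition rather than re-deriving it, so that the monotone description need only be verified once for the common set.
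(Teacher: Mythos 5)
Your proposal is correct and follows essentially the same route as the paper: cite Proposition~\ref{prop:wilf_equiv_equivalences_consec_relations} for the set equality $\bI_{n}\left(\underline{<,\geq}\right)=\bI_{n}\left(\underline{\neq,\geq}\right)$, then locate the turning index $j$ and propagate the monotonicity by induction using pattern avoidance, exactly as in the neighboring lemmas. The only cosmetic difference is that for $\bI_{n}\left(\underline{\geq,<}\right)$ the paper also offers an alternative via the complement bijection from Proposition~\ref{prop:complements_equivalences_consec_relations}(i$_1$), but your direct argument is the one the paper primarily indicates.
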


\begin{proof}
We showed in the proof of Proposition~\ref{prop:wilf_equiv_equivalences_consec_relations} that $\bI_{n}\left(\underline{<,\geq}\right)=\bI_{n}\left(\underline{\neq,\geq}\right)$.
Given $e\in\bI_{n}\left(\underline{<,\geq}\right)$, let $1\leq j\leq n$ be the largest integer such that $0=e_{1}=e_{2}=\dots =e_{j}$. If $j<n$, then $e_{j}<e_{j+1}$, and if $j<n-1$, then $e_{j+1}<e_{j+2}$ because $e$ avoids the pattern $\left(\underline{<,\geq}\right)$. An inductive argument shows that $e_{j}<e_{j+1}<\dots <e_{n}$.

The characterization of $e\in\bI_{n}\left(\underline{\geq,<}\right)$ can be obtained similarly by letting $j$ be the largest integer such that $e_{1}<e_{2}<\dots <e_{j}$. Alternatively, it follows from the fact, shown in the proof of Proposition~\ref{prop:complements_equivalences_consec_relations}(i$_1$), that the map $e\rightarrow e^{C}$ sending each inversion sequence to its complement
induces a bijection between $\bI_{n}\left(\underline{\geq,<}\right)$ and $\bI_{n}\left(\underline{<,\geq}\right)$.
\end{proof}

 Martinez and Savage show in \cite[Theorem~15]{MartinezSavageII} that the set $\bI_{n}\left(\neq,\leq,-\right)$ is also characterized
 by the same condition as in the first part of Lemma~\ref{lem:A000079}, and so $\bI_{n}\left(\underline{<,\geq}\right)=\bI_{n}\left(\underline{\neq,\geq}\right)=\bI_{n}\left(\neq,\leq,-\right)$. In addition, there is a bijection between this set and the set of all subsets of $[n-1]$, obtained by mapping an inversion sequence to the set of its nonzero values. The next result now follows using Theorem~\ref{EquivIneq}(i).

\begin{cor}\label{cor:A000079} For $n\geq 1$, $\left|\bI_{n}\left(\underline{\geq,<}\right)\right| = \left|\bI_{n}\left(\underline{<,\geq}\right)\right| = \left|\bI_{n}\left(\underline{\neq,\geq}\right)\right| = 2^{n-1}$.
\end{cor}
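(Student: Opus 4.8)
The plan is to combine the characterization from Lemma~\ref{lem:A000079} with the earlier Wilf-equivalence result. The cleanest route is to count directly from the characterization and then invoke Theorem~\ref{EquivIneq}(i) to transfer the count across the equivalence class. By Lemma~\ref{lem:A000079}, an inversion sequence $e\in\bI_{n}\left(\underline{<,\geq}\right)=\bI_{n}\left(\underline{\neq,\geq}\right)$ has the form $0=e_{1}=\dots=e_{j}<e_{j+1}<\dots<e_{n}$ for some $1\le j\le n$; that is, it is weakly increasing, starts with a (possibly empty after the forced first zero) run of zeros, and is strictly increasing thereafter. The most transparent way to enumerate these is to observe that such a sequence is completely determined by its set of nonzero values, since once we know which values appear and that the sequence is strictly increasing after the initial zeros, the positions are forced.

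First I would make this bijection explicit, as the excerpt already hints: map $e$ to the set of its nonzero entries, viewed as a subset of $[n-1]$. I would check that this lands in the right place and is invertible. The nonzero entries satisfy $0<e_{j+1}<\dots<e_{n}$ with $e_{i}<i\le n$, so each nonzero value lies in $\{1,\dots,n-1\}$, giving a subset of $[n-1]$. Conversely, given any subset $T\subseteq[n-1]$, list its elements in increasing order as the final entries of $e$ and pad the front with zeros; the inversion-sequence constraint $e_{i}<i$ must be verified, but this is automatic because the $k$th smallest nonzero value is at least $k$ and sits in a position that is at least $k+1$ from the left once the initial zeros are accounted for. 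Since $|[n-1]|=n-1$, the number of subsets is $2^{n-1}$, yielding $\left|\bI_{n}\left(\underline{<,\geq}\right)\right|=\left|\bI_{n}\left(\underline{\neq,\geq}\right)\right|=2^{n-1}$.

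Finally, to obtain the count for $\left(\underline{\geq,<}\right)$ I would not re-derive it from scratch but instead invoke Theorem~\ref{EquivIneq}(i), which asserts $\left(\underline{\geq,<}\right)\stackrel{ss}{\sim}\left(\underline{<,\geq}\right)\sim\left(\underline{\neq,\geq}\right)$; in particular these three patterns are Wilf equivalent, so their avoidance sets have equal cardinality for every $n$. This immediately gives $\left|\bI_{n}\left(\underline{\geq,<}\right)\right|=2^{n-1}$ as well. Alternatively, one could use the complementation bijection $e\mapsto e^{C}$ from Proposition~\ref{prop:complements_equivalences_consec_relations}(i$_1$), which induces a bijection between $\bI_{n}\left(\underline{\geq,<}\right)$ and $\bI_{n}\left(\underline{<,\geq}\right)$ directly; this is the self-contained option and avoids citing the full classification.

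I do not expect any genuine obstacle here: the whole result is essentially a bookkeeping consequence of the structural lemma plus the already-established equivalences, and the excerpt even points to the identification $\bI_{n}\left(\underline{<,\geq}\right)=\bI_{n}\left(\neq,\leq,-\right)$ in~\cite{MartinezSavageII} together with the subset bijection. The only point demanding a small amount of care is verifying that the subset-to-inversion-sequence map respects the constraint $e_{i}<i$ in both directions, so that the correspondence is genuinely a bijection rather than merely an injection; I would state this verification explicitly rather than leave it implicit.
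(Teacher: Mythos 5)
Your proposal follows essentially the same route as the paper: the structural characterization of Lemma~\ref{lem:A000079}, the bijection to subsets of $[n-1]$ via the nonzero values, and then Theorem~\ref{EquivIneq}(i) (or complementation) to transfer the count to $\left(\underline{\geq,<}\right)$; the paper simply outsources the subset bijection to Martinez--Savage rather than verifying it. One small correction to the detail you rightly flag as needing care: your justification that the inverse map satisfies $e_i<i$ is stated backwards. Knowing that the $k$th smallest nonzero value is \emph{at least} $k$ and sits in a position at least $k+1$ gives two lower bounds and cannot imply value $<$ position; what you need is an \emph{upper} bound on the value, namely that if $T=\{t_1<\dots<t_m\}\subseteq[n-1]$ then $t_k\le (n-1)-(m-k)$ because the $m-k$ strictly larger elements of $T$ are all at most $n-1$, while $t_k$ occupies position $n-m+k$, so $t_k<n-m+k$ as required. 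With that one-line fix the argument is complete.
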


\subsection{The pattern $\left(\underline{\neq,\neq}\right)$}\label{subsec:A000085}
Let $\Inv_n$ denote the set of involutions of $[n]$, that is, permutations $\pi\in S_n$ such that $\pi^{-1}=\pi$.
We will show that there is a bijection between $\bI_n\left(\underline{\neq,\neq}\right)$ and $\Inv_n$.

\begin{prop}\label{prop:A000085}
There is a bijection $\Upsilon:\bI_{n}\left(\underline{\neq,\neq}\right)\to\Inv_{n}$. In particular, the number of inversion sequences avoiding $\left(\underline{\neq,\neq}\right)$ satisfies the recurrence
\begin{equation}\label{eq:recA000085}\left|\bI_n\left(\underline{\neq,\neq}\right)\right|=\left|\bI_{n-1}\left(\underline{\neq,\neq}\right)\right|+(n-1)\left|\bI_{n-2}\left(\underline{\neq,\neq}\right)\right|
\end{equation}
for $n\ge2$, with initial conditions $\left|\bI_0\left(\underline{\neq,\neq}\right)\right|=\left|\bI_1\left(\underline{\neq,\neq}\right)\right|=1$, and its exponential generating function is
\[
\sum_{n\geq 0}\left|\bI_{n}\left(\underline{\neq,\neq}\right)\right|\frac{z}{n!}=\exp \left(z+\frac{z^{2}}{2}\right).
\]
\end{prop}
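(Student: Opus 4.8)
The plan is to construct the bijection $\Upsilon:\bI_{n}\left(\underline{\neq,\neq}\right)\to\Inv_{n}$ explicitly and verify it carefully; the recurrence and exponential generating function then follow immediately from standard facts about involutions. First I would understand the structure of $\bI_{n}\left(\underline{\neq,\neq}\right)$. Avoiding $\left(\underline{\neq,\neq}\right)$ means that for every $i$, we cannot have both $e_{i}\neq e_{i+1}$ and $e_{i+1}\neq e_{i+2}$; equivalently, among any three consecutive entries, at least one of the two adjacent pairs must be equal. This is a strong local constraint, and the first step is to translate it into a clean combinatorial description of the allowable sequences, likely showing that the positions where $e_{i}=e_{i+1}$ must be spread out so that no two consecutive positions can both be ``ascents-or-descents.''

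The main work will be defining $\Upsilon$ so that it lands in $\Inv_{n}$ and is invertible. Since involutions on $[n]$ are exactly the permutations that decompose into fixed points and disjoint transpositions (2-cycles), I expect the natural approach to encode each inversion sequence $e$ by specifying a matching on $[n]$: reading $e$ from left to right, a position $i$ with $e_{i}\neq e_{i-1}$ (a ``new'' value in some suitable sense) should correspond to pairing $i$ with an earlier index determined by the value $e_{i}$, while positions that merely repeat the previous entry become fixed points. The avoidance condition should guarantee precisely that the resulting pairing is a valid involution, that is, that no index is asked to participate in two transpositions. Concretely, I would try to set up $\Upsilon$ recursively: having processed $e_{1}\dots e_{n-1}$ to obtain an involution on $[n-1]$, the last entry $e_{n}\in\{0,1,\dots,n-1\}$ either leaves $n$ as a fixed point or creates a transposition $(n,\,k)$ for some admissible $k<n$, and the avoidance of $\left(\underline{\neq,\neq}\right)$ should control exactly which of the $n$ choices for $e_{n}$ are compatible. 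This recursive viewpoint is what will make the recurrence transparent.

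To prove the recurrence~\eqref{eq:recA000085} directly, I would condition on the behavior at the last position. An inversion sequence in $\bI_{n}\left(\underline{\neq,\neq}\right)$ either satisfies $e_{n-1}=e_{n}$, in which case deleting $e_{n}$ gives an element of $\bI_{n-1}\left(\underline{\neq,\neq}\right)$ (contributing the term $\left|\bI_{n-1}\left(\underline{\neq,\neq}\right)\right|$), or it satisfies $e_{n-1}\neq e_{n}$, which by the avoidance condition forces $e_{n-2}=e_{n-1}$; in this second case I would argue that $e_{n}$ can be any of the $n-1$ values in $\{0,\dots,n-1\}$ different from $e_{n-1}$, and that the truncation $e_{1}\dots e_{n-2}$ ranges freely over $\bI_{n-2}\left(\underline{\neq,\neq}\right)$, giving the term $(n-1)\left|\bI_{n-2}\left(\underline{\neq,\neq}\right)\right|$. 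The boundary case where $n-2<1$ should be checked against the stated initial conditions. This is the classical recurrence for $|\Inv_{n}|$, so it simultaneously confirms the count and matches A000085. The exponential generating function
\[
\sum_{n\geq 0}\left|\bI_{n}\left(\underline{\neq,\neq}\right)\right|\frac{z^{n}}{n!}=\exp\!\left(z+\frac{z^{2}}{2}\right)
\]
then follows either from the well-known EGF for involutions via the bijection $\Upsilon$, or by translating the recurrence into a first-order linear differential equation $y'=(1+z)\,y$ with $y(0)=1$ and solving it.

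The step I expect to be the main obstacle is verifying that $\Upsilon$ (or its inverse) is well-defined in both directions: that every $e\in\bI_{n}\left(\underline{\neq,\neq}\right)$ produces a genuine involution with no index over-committed, and conversely that every involution arises from a unique avoiding inversion sequence respecting the constraints $0\le e_{i}<i$. The delicate point is matching the local avoidance condition on $e$ with the global ``each element lies in at most one 2-cycle'' condition on involutions; getting the encoding of values to admissible transposition partners exactly right, and checking the interaction at each step, is where the care is needed. If a clean direct bijection proves awkward, the recurrence argument above already suffices to establish both the count and the generating function, so I would present the recurrence as the primary route and describe $\Upsilon$ as the bijective refinement.
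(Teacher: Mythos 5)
Your proposal is correct and follows essentially the same route as the paper: the recurrence is established by conditioning on whether $e_{n-1}=e_n$ (with the avoidance condition forcing $e_{n-2}=e_{n-1}$ in the unequal case, yielding $n-1$ free choices for $e_n$), and the bijection $\Upsilon$ is built recursively by having the last entry either make $n$ a fixed point or create a transposition $(i,n)$ with the partner $i$ encoded by the difference $e_n-e_{n-1}$ modulo $n$. The paper likewise presents the recurrence as the primary argument and the explicit bijection as a refinement, so there is nothing substantive to add.
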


\begin{proof}
We start by proving the recurrence~\eqref{eq:recA000085}, which will also inform the construction of the bijection $\Upsilon$. The initial conditions are trivially satisfied.
Let $n\ge2$, and let $e\in \bI_{n}\left(\underline{\neq,\neq}\right)$.
If $e_{n-1}=e_n$, then $e$ is obtained from an arbitrary inversion sequence in $\bI_{n}\left(\underline{\neq,\neq}\right)$ by repeating the last entry, so there are $\left|\bI_{n-1}\left(\underline{\neq,\neq}\right)\right|$ inversion sequences with $e_{n-1}=e_n$. If $e_{n-1}\ne e_n$, then we must have $e_{n-2}=e_{n-1}$ (unless $n=2$, in which case $e=01$), since $e$ avoids $\left(\underline{\neq,\neq}\right)$. In this case, $e$ is obtained from an arbitrary
inversion sequence in $\bI_{n-2}\left(\underline{\neq,\neq}\right)$ by repeating the last entry, and then appending any element from $\{0,1,\dots,n-1\}\setminus\{e_{n-2}\}$, for which there are $n-1$ choices. Thus, there are $(n-1)\left|\bI_{n-2}\left(\underline{\neq,\neq}\right)\right|$ inversion sequences with $e_{n-1}\ne e_n$. The recurrence is now proved.

It is well known that the number of involutions $|\Inv_n|$ satisfies the same recurrence. It follows that $\left|\bI_n\left(\underline{\neq,\neq}\right)\right|=|\Inv_n|$, and that their exponential generating function is $\exp \left(z+\frac{z^{2}}{2}\right)$.

To construct an explicit bijection $\Upsilon$, let $e\in\bI_{n}\left(\underline{\neq,\neq}\right)$, and define $\Upsilon(e)\in S_{n}$ recursively as follows. For $n=0$, define the image of the empty inversion sequence to be the empty permutation; for $n=1$, define $\Upsilon(0)=1$. For $n\geq 2$, let $i\in[n]$ be such that $e_{n}-e_{n-1}\equiv i\pmod{n}$.
\begin{itemize}
\item If $i=n$, let $\sigma=\Upsilon\left(e_{1}e_{2}\ldots e_{n-1}\right)$, and define
$\Upsilon(e)=\sigma_{1}\sigma_{2}\ldots \sigma_{n-1}\,n$.
\item If $i\neq n$, let $\sigma$ be the permutation of $[n-1]\backslash\left\{i\right\}$ with reduction $\Upsilon\left(e_{1}e_{2}\ldots e_{n-2}\right)$, and define
$\Upsilon(e)=\sigma_{1}\sigma_{2}\ldots \sigma_{i-1}\,n\,\sigma_{i}\sigma_{i+1}\ldots \sigma_{n-2}\,i$.\qedhere
\end{itemize}
\end{proof}

\begin{exa}\label{exa:A000085} Consider $e=00114$. Computing $\Upsilon\left(e_{1}e_{2}\ldots e_{i}\right)$ recursively yields: $\Upsilon(0)=1$, $\Upsilon(00)=12$, $\Upsilon(001)=321$, $\Upsilon(0011)=3214$, and $\Upsilon(e)=42513$.
\end{exa}

Both maps $\Upsilon^{-1}$ (from the proof of Proposition~\ref{prop:A000085}) and $\Theta$ (from Equation~\eqref{eq:theta_bijection}) give encodings of involutions as inversion sequences, but they do not coincide in general. For instance, if $\pi=42513$, then $\Upsilon^{-1}(\pi)=00114\neq 01032=\Theta(\pi)$.

\subsection{The pattern $\left(\underline{\leq,>}\right)$}\label{subsec:A000108}
We show that inversion sequences avoiding this pattern are those that are weakly increasing, and that they are in bijection with Dyck paths.

\begin{lem}\label{lem:A000108}
For $n\ge1$,
$$\bI_{n}\left(\underline{\leq,>}\right)=\{e\in\bI_{n}:
e_{1}\leq e_{2}\leq\dots\leq e_{n}\}.$$
\end{lem}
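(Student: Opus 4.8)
The plan is to prove the set equality $\bI_{n}\left(\underline{\leq,>}\right)=\{e\in\bI_{n}: e_{1}\leq e_{2}\leq\dots\leq e_{n}\}$ by establishing both inclusions. The inclusion to the left is immediate: if $e$ is weakly increasing, then we never have $e_{i+1}>e_{i+2}$, so $e$ cannot contain any occurrence of $\left(\underline{\leq,>}\right)$, which requires $e_{i+1}>e_{i+2}$ as its second relation. Hence every weakly increasing inversion sequence lies in $\bI_{n}\left(\underline{\leq,>}\right)$.

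For the inclusion to the right, I would argue by contradiction, which is the more interesting direction. Suppose $e\in\bI_{n}\left(\underline{\leq,>}\right)$ is \emph{not} weakly increasing, and let $i$ be the smallest index with $e_{i}>e_{i+1}$. Since $e_1=0$ and $0\le e_2$, we must have $i\ge2$ (the first descent cannot occur at position $1$, as $e_1=0\le e_2$ always holds). By minimality of $i$, we have $e_{i-1}\le e_{i}$. But then the consecutive triple $e_{i-1}e_{i}e_{i+1}$ satisfies $e_{i-1}\le e_{i}$ and $e_{i}>e_{i+1}$, which is precisely an occurrence of $\left(\underline{\leq,>}\right)$ in position $i-1$. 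This contradicts the assumption that $e$ avoids $\left(\underline{\leq,>}\right)$. Therefore $e$ must be weakly increasing, completing the inclusion and hence the lemma.

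The main (though modest) obstacle is simply making sure the boundary case is handled cleanly: one must verify that the first descent cannot occur at position $1$, which is where the defining property $e_1=0\le e_2$ of inversion sequences is used. Once that is observed, the argument is a one-line contradiction using the minimality of the chosen descent index. This lemma fits the recurrent theme, already seen in Lemmas~\ref{lem:A000027_1}, \ref{lem:A000124}, \ref{lem:A000045}, \ref{lem:A000071} and~\ref{lem:A000079}, of translating avoidance of a pattern of relations into a global monotonicity (or unimodality) condition on the entries of $e$. The characterization will then be used, as the surrounding text announces, to place $\bI_{n}\left(\underline{\leq,>}\right)$ in bijection with Dyck paths, recovering the Catalan numbers $C_n$ listed in Table~\ref{tab3}.
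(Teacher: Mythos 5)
Your proof is correct and is essentially the paper's argument in contrapositive form: the paper propagates $e_1\le e_2 \Rightarrow e_2\le e_3 \Rightarrow \cdots$ directly by induction, while you take a minimal descent and derive a contradiction, but both hinge on the same two facts ($e_1=0\le e_2$ and that a descent preceded by a weak ascent is an occurrence of $\left(\underline{\leq,>}\right)$). No issues.
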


\begin{proof}
If $e\in \bI_{n}\left(\underline{\leq,>}\right)$, the fact that $0=e_1\le e_2$ implies that $e_2\le e_3$, assuming these entries are defined. The remaining inequalities follow by iterating the same argument.
\end{proof}

A {\em Dyck path} of semilength $n$ is a lattice path in $\mathbb{Z}^{2}$ from $(0,0)$ to $(n,n)$ consisting of
horizontal steps $N=(0,1)$ and vertical steps $E=(1,0)$, which never goes above the line $y=x$.
Denoting by $\mathcal{D}_n$ the set of such paths, it is well-known that $\left|\mathcal{D}_{n}\right|=C_{n}=\frac{1}{n+1}\binom{2n}{n}$, the {\em $n$-th Catalan number}, and that
\begin{equation}\label{eq:Catalan} C(z):=\sum_{n\ge0}C_nz^n=\frac{1-\sqrt{1-4z}}{2z}.
\end{equation}
The sequence $C_{n}$ is one of the most ubiquitous sequences in enumerative combinatorics~\cite{StanleyCatalan}, and it is listed as A000108 in~\cite{OEIS}.

\begin{prop}\label{prop:A000108} Let $n\geq 1$ and $0\leq k<n$. Then
\[
\left|\bI_{n}\left(\underline{\leq,>}\right)\right|=C_{n}=\frac{1}{n+1}\binom{2n}{n}.
\]
\end{prop}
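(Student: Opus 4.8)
The plan is to combine the characterization from Lemma~\ref{lem:A000108} with an explicit bijection onto Dyck paths. By that lemma, $\bI_{n}\left(\underline{\leq,>}\right)$ is exactly the set of weakly increasing inversion sequences, i.e.\ sequences $0=e_{1}\le e_{2}\le\dots\le e_{n}$ subject to $e_{i}\le i-1$. Since $|\mathcal{D}_{n}|=C_{n}$ was recalled just before the statement, it suffices to exhibit a bijection $\bI_{n}\left(\underline{\leq,>}\right)\to\mathcal{D}_{n}$, which then immediately yields the formula $\left|\bI_{n}\left(\underline{\leq,>}\right)\right|=C_{n}$.

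The bijection records the heights of the $E$-steps. Given such an $e$, I would build a path $P$ whose $i$-th step $E=(1,0)$ is placed at height $e_{i}$, inserting $e_{i+1}-e_{i}$ steps $N=(0,1)$ between the $i$-th and $(i+1)$-th $E$-steps, and $n-e_{n}$ steps $N$ after the last $E$-step. First I would check that $P\in\mathcal{D}_{n}$: it uses $n$ steps $E$ and $(e_{2}-e_{1})+\dots+(e_{n}-e_{n-1})+(n-e_{n})=n-e_{1}=n$ steps $N$, so it runs from $(0,0)$ to $(n,n)$. The inverse map reads off, for a given Dyck path, the height $e_{i}$ of its $i$-th $E$-step; monotonicity $e_{1}\le\dots\le e_{n}$ is automatic because $N$-steps only raise the height, and $e_{i}\le i-1$ is forced by the underdiagonal condition ($y\le x$) evaluated at $x=i-1$ just before the $i$-th $E$-step. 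The two maps are visibly inverse to each other, so $\bI_{n}\left(\underline{\leq,>}\right)$ and $\mathcal{D}_{n}$ are in bijection.

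The one point I would write out in full, and the only genuine obstacle, is verifying that $P$ never rises above the line $y=x$ during the $N$-step ascents, rather than merely at the $E$-steps. This reduces to a single observation: between the $i$-th and $(i+1)$-th $E$-steps the path sits at $x=i$ and climbs only up to height $e_{i+1}\le(i+1)-1=i$, so $y\le x$ is maintained throughout the ascent; the final ascent to $(n,n)$ is handled the same way. Everything else (step counts, endpoints, and mutual inverseness) is routine bookkeeping. Finally, to account for the parameter $k$ in the statement, I would note that under this bijection the condition $e_{n}=k$ corresponds exactly to the last $E$-step lying at height $k$, equivalently to $P$ passing through $(n-1,k)$ before its final $n-k$ steps $N$; counting underdiagonal paths to $(n-1,k)$ gives the ballot (Catalan-triangle) numbers, whose sum over $0\le k<n$ recovers $C_{n}$, consistent with the total count.
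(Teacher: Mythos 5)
Your proof is correct and follows essentially the same route as the paper: the paper likewise invokes Lemma~\ref{lem:A000108} and obtains the bijection to $\mathcal{D}_{n}$ by taking the standard underdiagonal lattice-path representation of $e$ and appending $n-e_{n}$ steps $N$, exactly the map you describe (your explicit verification of the underdiagonal condition during the ascents is a detail the paper leaves implicit). Your closing remark about $k$ is harmless but unnecessary, since the hypothesis $0\leq k<n$ is a vestigial artifact of the statement and plays no role in the conclusion.
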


\begin{proof}
Using Lemma~\ref{lem:A000108}, we obtain a straightforward bijection  between $\bI_{n}\left(\underline{\leq,>}\right)$ and $\mathcal{D}_{n}$ by
appending $n-e_n$ steps $N$ to our usual representation of an inversion sequence $e\in\bI_{n}\left(\underline{\leq,>}\right)$ as an underdiagonal lattice path, see Figure~\ref{fig:CatalanPath}.
\end{proof}

\begin{figure}[htb]
	\noindent \begin{centering}
	\includegraphics[scale=0.6]{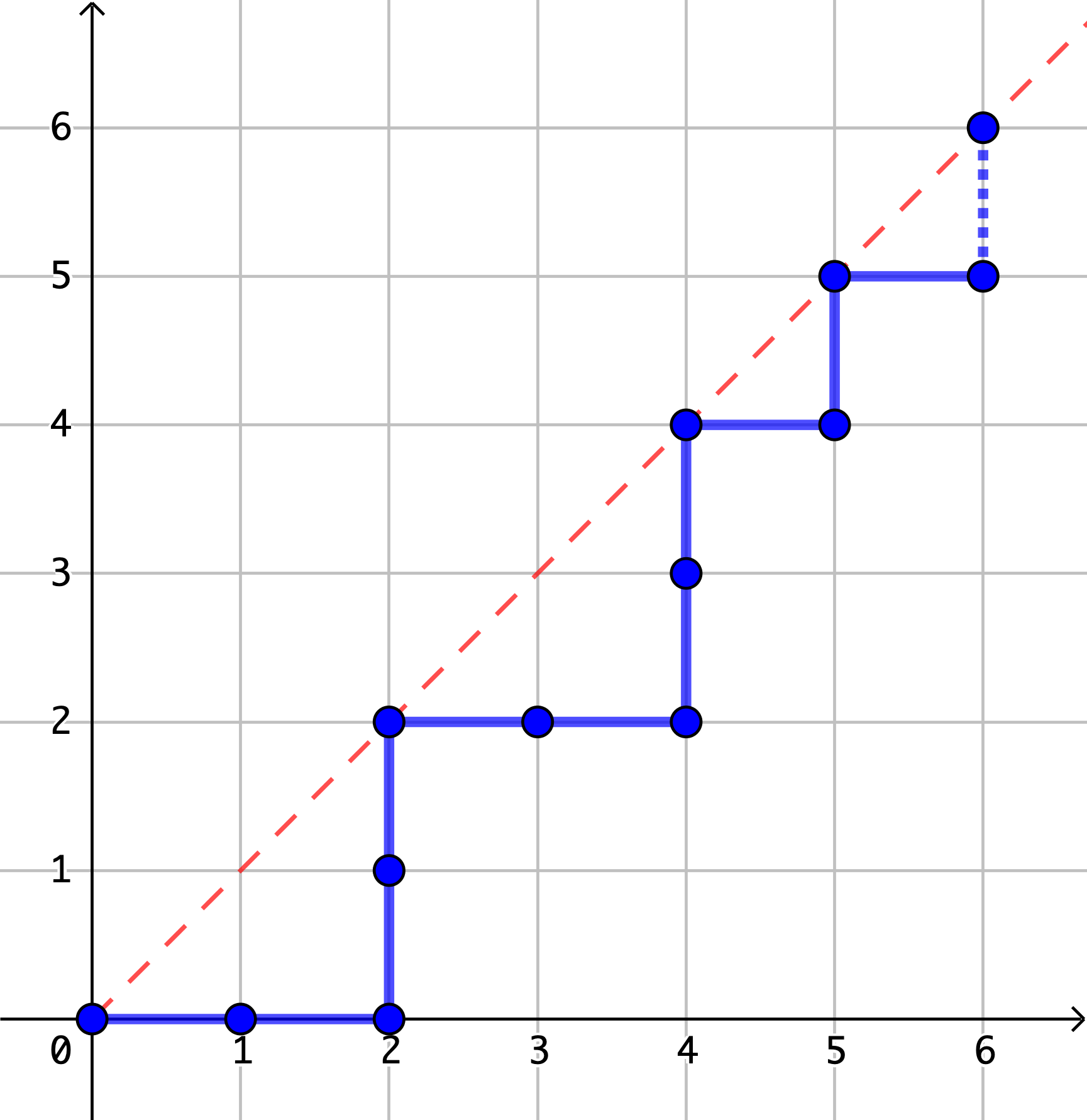}
		\par\end{centering}

	\protect\caption{Representation of $e=001123\in\bI_{6}\left(\underline{\leq,>}\right)$ as an underdiagonal lattice path. Adding the last vertical step we obtain the Dyck path $EENNEENNENEN$.\label{fig:CatalanPath}}
\end{figure}

\subsection{The pattern $\left(\underline{>,\leq}\right)$}\label{subsec:A071356}
We characterize inversion sequences avoiding this pattern in terms of an asymmetric unimodality condition.

\begin{lem}\label{lem:A071356}
For $n\geq 1$,
$$\bI_{n}\left(\underline{>,\leq}\right)=\{e\in\bI_{n}:
e_{1}\leq e_{2}\leq\dots\leq e_{j}>e_{j+1}>\dots >e_{n}\text{ for some }1\leq j\leq n\}.$$
\end{lem}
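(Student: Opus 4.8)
The plan is to prove this characterization lemma using the same pattern of argument that has been used throughout the preceding subsections (as in Lemmas~\ref{lem:A000045}, \ref{lem:A000071}, and \ref{lem:A000079}): establish the easy containment by direct inspection, then prove the reverse containment by choosing a distinguished index $j$ and showing that avoidance forces the two required monotone stretches.

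The containment to the left is immediate: if $e$ satisfies $e_{1}\leq e_{2}\leq\dots\leq e_{j}>e_{j+1}>\dots >e_{n}$ for some $j$, then no index $i$ can give an occurrence of $\left(\underline{>,\leq}\right)$, since any triple $e_{i}e_{i+1}e_{i+2}$ lies either in the weakly increasing prefix (so $e_{i}\le e_{i+1}$, ruling out $e_i>e_{i+1}$), in the strictly decreasing suffix (so $e_{i+1}>e_{i+2}$, ruling out $e_{i+1}\le e_{i+2}$), or straddles the peak at position $j$ (where $e_{i}\le e_{j}> e_{j+1}>\dots$, again precluding the required first strict descent followed by a weak ascent).

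For the reverse containment, I would take $e\in\bI_{n}\left(\underline{>,\leq}\right)$ and let $j$ be the smallest integer such that $e_{j}>e_{j+1}$, setting $j=n$ if no such descent exists. By minimality of $j$, we have $e_{1}\leq e_{2}\leq\dots\leq e_{j}$ directly. It remains to show that once a strict descent occurs at position $j$, the sequence continues strictly decreasing, that is, $e_{j+1}>e_{j+2}>\dots>e_{n}$. This is exactly where avoidance enters: if $e_{j}>e_{j+1}$ and (for contradiction) $e_{j+1}\le e_{j+2}$, then $e_{j}e_{j+1}e_{j+2}$ would be an occurrence of $\left(\underline{>,\leq}\right)$, so we must have $e_{j+1}>e_{j+2}$; iterating this argument inductively along the suffix yields $e_{j+1}>e_{j+2}>\dots>e_{n}$.

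I expect the argument to be entirely routine, with no serious obstacle; the only point requiring a little care is the inductive step along the decreasing suffix, where one must confirm that each newly established strict descent $e_{k}>e_{k+1}$ again combines with avoidance to force $e_{k+1}>e_{k+2}$, so that the strict decrease propagates all the way to $e_n$. Combining the two containments gives the stated equality.
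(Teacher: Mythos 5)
Your argument is correct and is essentially the paper's own proof: both take $j$ to be the smallest index with $e_{j}>e_{j+1}$ (or $j=n$ if none exists), get the weakly increasing prefix from minimality, and then iterate the avoidance of $\left(\underline{>,\leq}\right)$ to force the strictly decreasing suffix. The paper states this more tersely but with the identical structure, so there is nothing to add.
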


\begin{proof}
Given $e\in\bI_{n}\left(\underline{>,\leq}\right)$, let $1\le j\le n$ be the smallest index such that $e_{j}>e_{j+1}$, or let $j=n$ if there is no such index. Since $e$ avoids $\left(\underline{>,\leq}\right)$, we must have $e_{j}>e_{j+1}>\dots >e_{n}$. The characterization follows.
\end{proof}

Martinez and Savage show in~\cite[Section~2.19]{MartinezSavageII} that inversion sequences in $\bI_{n}\left(>,\leq,-\right)$ are those satisfying the same inequalities from Lemma~\ref{lem:A071356}, and so
\begin{equation}\label{eq:Iclassical}
\bI_{n}\left(\underline{>,\leq}\right)=\bI_{n}\left(>,\leq,-\right).
\end{equation} They also conjecture
that $\left|\bI_{n}\left(>,\leq,-\right)\right|$ is given by sequence A071356 from~\cite{OEIS}; equivalently, that Theorem~\ref{carlaConjecture} holds. To prove their conjecture, we introduce certain lattice paths.

\begin{defin}\label{def:marked} A {\em marked Dyck path} $P$ is an underdiagonal lattice path from $(0,0)$ to some point in the diagonal, with horizontal steps $E=(1,0)$ and two possible kinds of vertical steps $(0,1)$, denoted by $N$ and $N^*$ (the latter are called {\em marked} steps). Denoting by
$E(P)$, $N(P)$ and $N^*(P)$, the number of $E$, $N$ and $N^*$ steps in $P$, respectively, the size of $P$ is defined as $N^{*}(P)+E(P)=N(P)+2N^*(P)$. Let $\mathcal{P}_{n}$ denote the set of marked Dyck paths of size $n$.

If a marked Dyck path $P$ has at least one $N^{*}$ step in the last run of vertical steps, then we say that $P$ has a {\em marked tail}. Otherwise, we say that $P$ has an {\em unmarked tail}. We denote by $\mathcal{R}_{n}$ the set of paths in $\mathcal{P}_n$ with an unmarked tail.
\end{defin}

\begin{exa} The marked Dyck path $P_1=ENEN^{*}$ has size $3$ and a marked tail, so $P_1\in\mathcal{P}_3\setminus\mathcal{R}_3$. On the other hand, $P=ENEN^{*}EEN^{*}EENNN$ has size $8$ and an unmarked tail, so $P\in\mathcal{R}_{8}$. These paths are drawn in Figure~\ref{fig:carla_conj_proof2}, with a $*$ indicating which vertical steps are $N^*$ steps.
\end{exa}

\begin{figure}[htb]
	\noindent \begin{centering}
\includegraphics[scale=0.6]{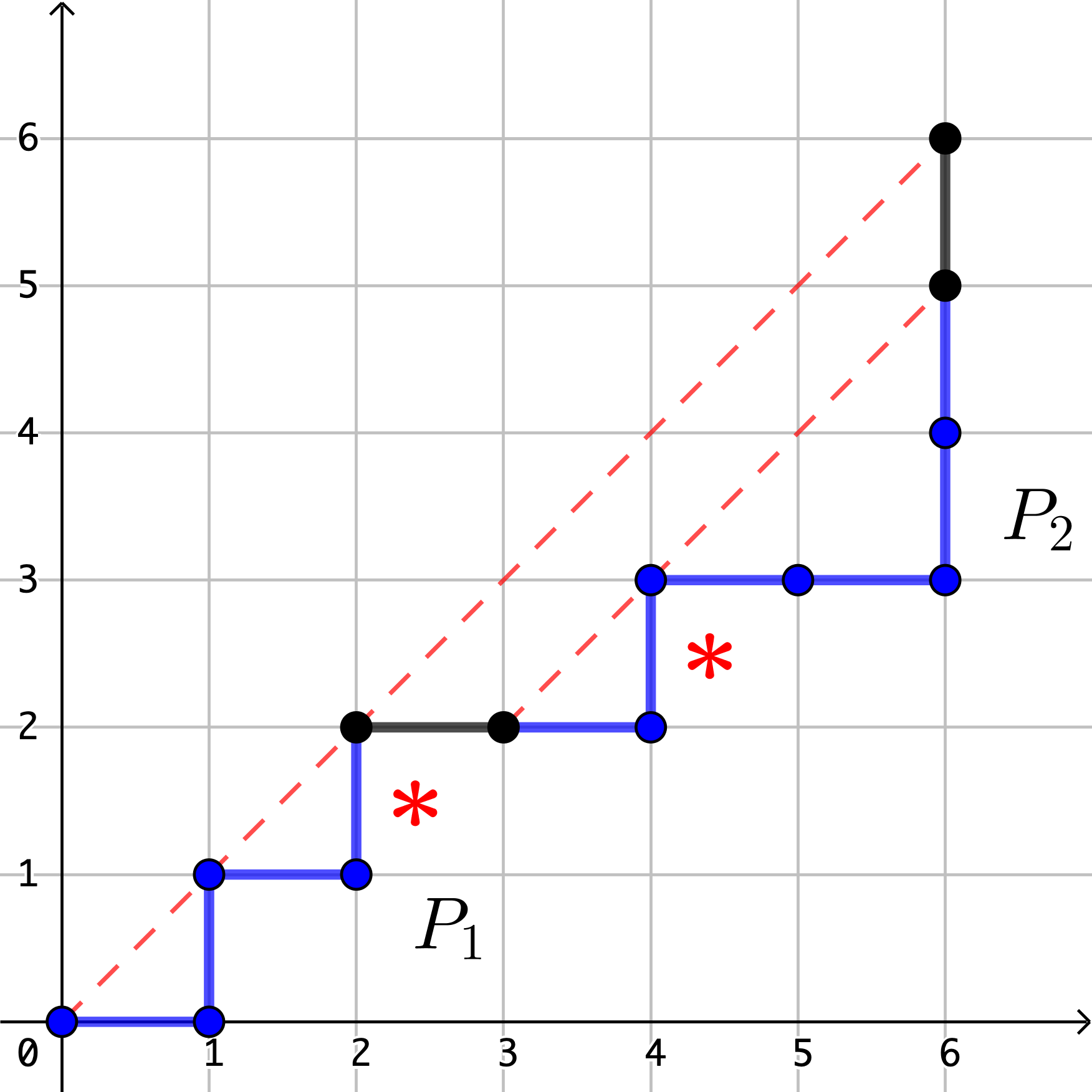}
		\par\end{centering}
\protect\caption{The marked Dyck path $R=ENEN^{*}EEN^{*}EENNN\in\mathcal{R}_{8}$ may be decomposed as $R=P_{1}EP_{2}N$, with $P_{1}=ENEN^{*}\in\mathcal{P}_{3}$ and $P_{2}=EN^{*}EENN\in\mathcal{R}_{4}$.}\label{fig:carla_conj_proof2}
\end{figure}

Next we prove Theorem~\ref{carlaConjecture}, which gives a generating function for the sequence $\left|\bI_{n}\left(\underline{>,\leq}\right)\right|=\left|\bI_{n}\left(>,\leq,-\right)\right|$.

\begin{proof}[Proof of Theorem~\ref{carlaConjecture}]
First we describe a bijection $\varphi $ between $\bI_{n}\left(\underline{>,\leq}\right)$ (which equals $\bI_{n}\left(>,\leq,-\right)$ by Equation~\eqref{eq:Iclassical}) and $\mathcal{R}_{n}$.
Let $e\in\bI_{n}\left(\underline{>,\leq}\right)$. By Lemma~\ref{lem:A071356}, there exists $1\le j\le n$ such that $e_{1}\leq e_{2}\leq\dots\leq e_{j}>e_{j+1}>\dots >e_{n}$. Let $P$ be the corresponding underdiagonal lattice path from $(0,0)$ to the line $x=n$, using steps $N=(0,1)$, $S=(0,-1)$ and $E=(1,0)$, having $n$ steps $E$ at heights given by $e_1,\dots,e_n$.
We construct $\varphi(e)\in\mathcal{R}_{n}$ as follows; see Figure~\ref{fig:carla_conj_proof1}(a)(b) for an example.
\begin{enumerate}[label=\arabic*)]
\item For every $E$ step in the descending portion of $P$ (that is, to the right of the line $x=j$), which corresponds to an entry $e_i$ with $i>j$, mark the $N$ step in the ascending portion of $P$ going from height $e_i$ to height $e_i+1$, turning it into an $N^{*}$ step.
\item Erase the descending portion of $P$, and instead append $j-e_j$ $N$ steps. Let $\varphi(e)\in \mathcal{R}_n$ be the resulting path from the origin to $(j,j)$.
\end{enumerate}

\begin{figure}[htb]
\begin{centering}
\begin{tabular}{c@{\qquad}c@{\qquad}c}
\includegraphics[scale=0.48]{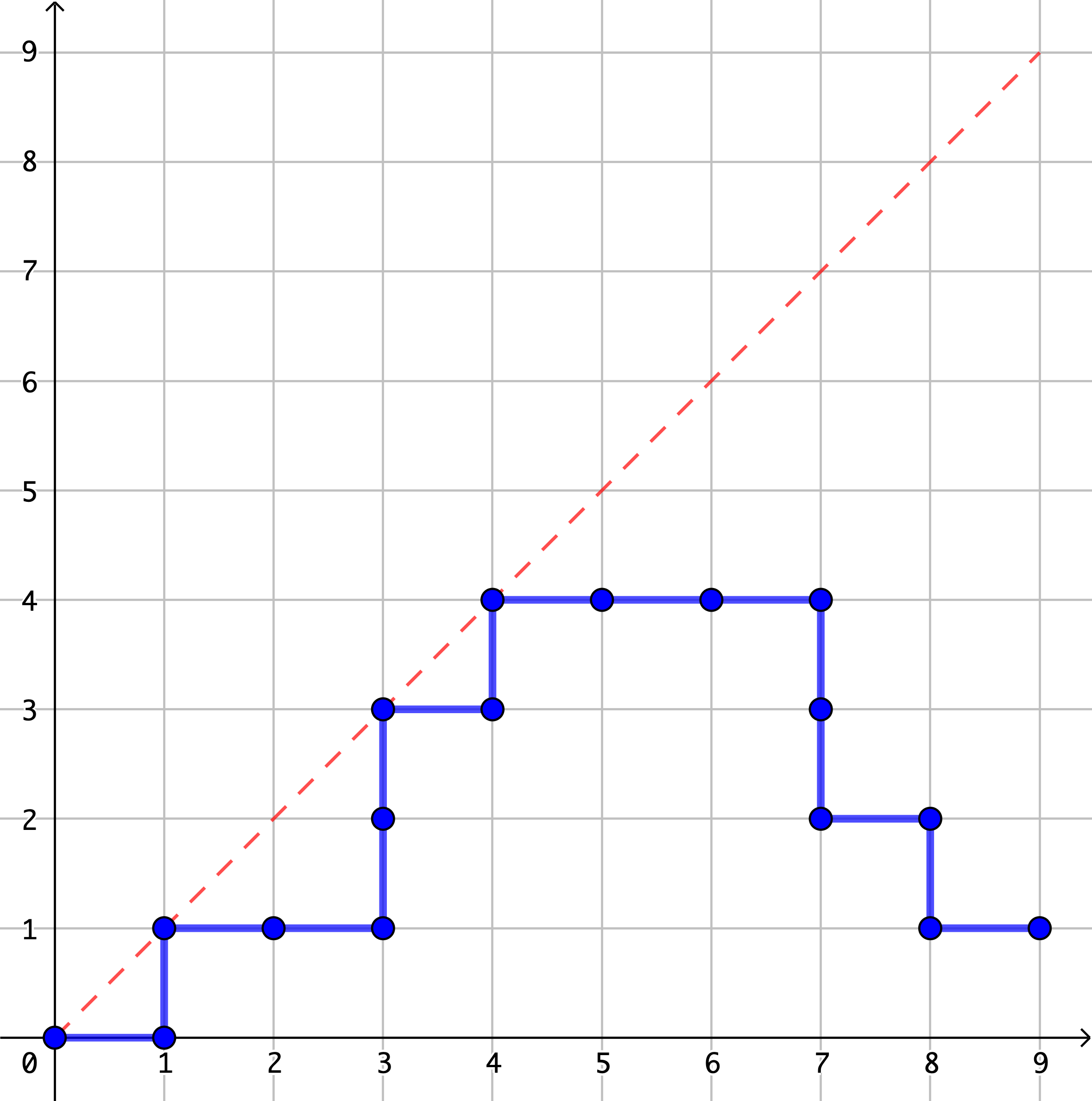}
&
\includegraphics[scale=0.48]{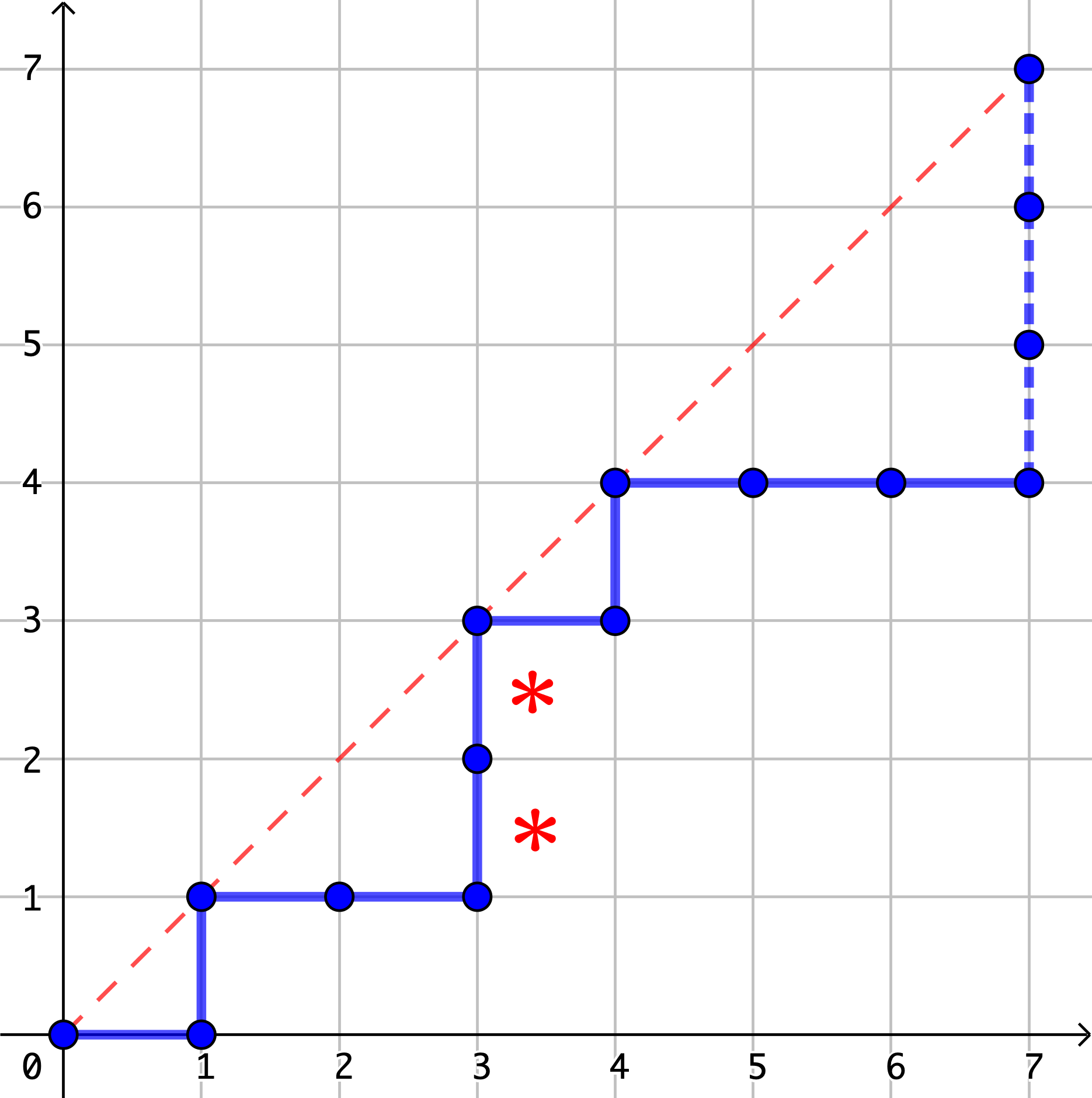}
&
\includegraphics[scale=0.48]{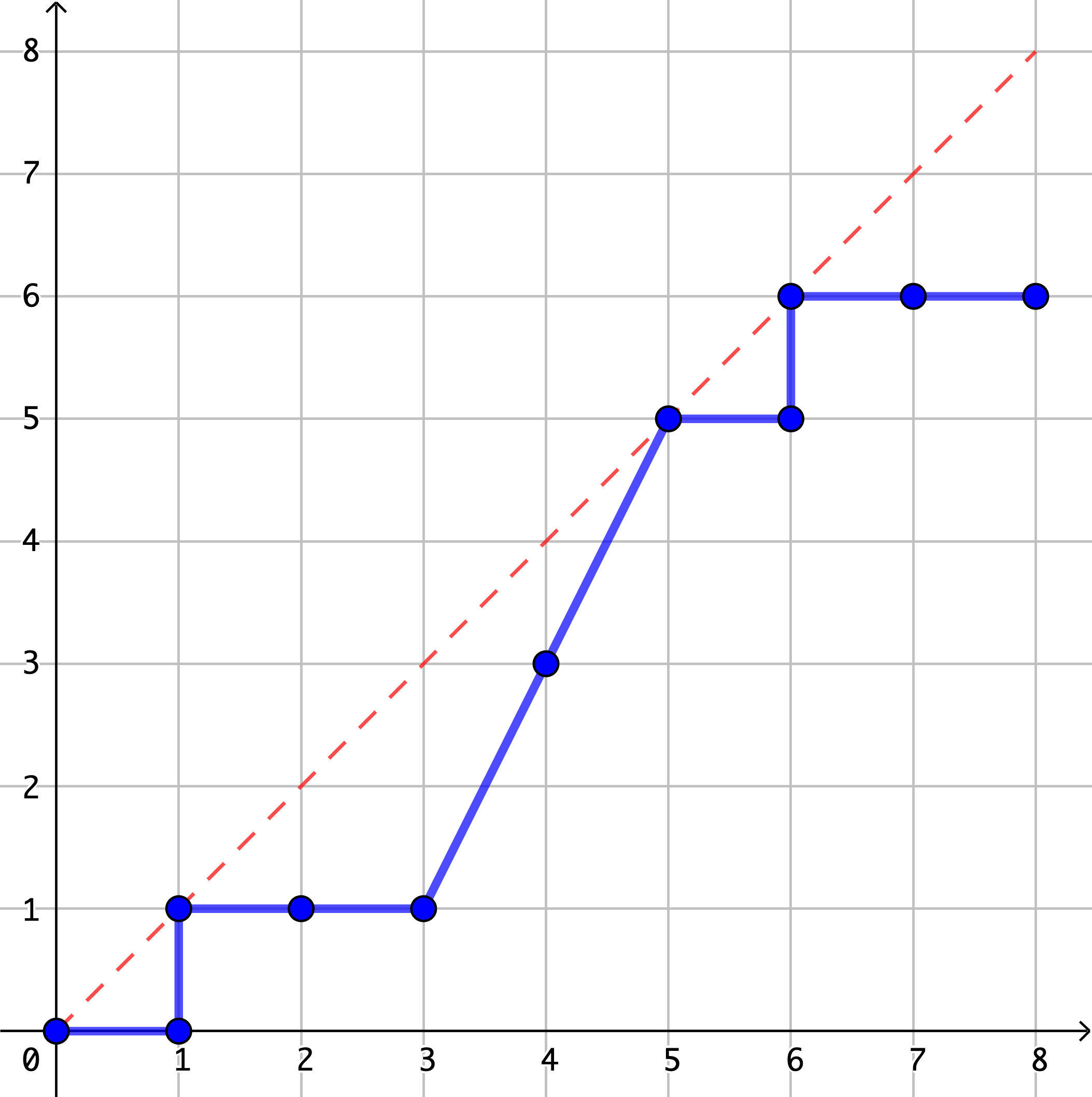}
\\
(a) & (b) & (c)
\end{tabular}
\end{centering}
\caption{(a) The inversion sequence $e=011344421\in\bI_{9}\left(\underline{>,\leq}\right)$ represented as the underdiagonal lattice path ${P=ENEENNENEEESSESE}$. (b) Its corresponding marked Dyck path ${\varphi(e)=ENEEN^{*}N^{*}ENEEENNN}\in\mathcal{R}_{9}$. (c) The path $R'=ENEEDDENEEE\in\mathcal{R}'_{8}$ corresponding to $e$ under the bijection $\varphi'$.}\label{fig:carla_conj_proof1}
\end{figure}

It is clear that $\varphi:\bI_{n}\left(\underline{>,\leq}\right)\to\mathcal{R}_{n}$ is a bijection, so it suffices to enumerate $\mathcal{R}_{n}$.

For $n\geq 1$, every $R\in\mathcal{R}_{n}$ can be decomposed uniquely as $R=P_{1}EP_{2}N$, where $P_{1}\in\mathcal{P}_{j}$ and $P_{2}\in\mathcal{R}_{n-j-1}$ for some $0\leq j\leq n-1$ (see Figure~\ref{fig:carla_conj_proof2} for an example). It follows that if we define $P(z)=\sum_{n\geq 0}\left|\mathcal{P}_{n}\right|z^{n}$ and $R(z)=\sum_{n\geq 0}\left|\mathcal{R}_{n}\right|z^{n}$, these generating functions are related by $R(z)=1+z\,P(z)R(z)$, and so
\[
R(z)=\frac{1}{1-z\, P(z)}.
\]

To find $P(z)$, think of marked Dyck paths as Dyck paths with two kinds of vertical steps: $N$ (contributing $1$ to the size) and $N^*$  (contributing $2$ to the size), whereas $E$ steps do not contribute to the size. If follows that $P(z)=C(z+z^{2})$, with $C(z)$ given by Equation~\eqref{eq:Catalan}. Indeed, the monomials $z$ and $z^{2}$ account for the choice of $N$ and $N^{*}$ steps, respectively. We deduce that
\[
R(z)=\frac{1}{1-z\, C(z+z^{2})}=\frac{2+2z}{1+2z+\sqrt{1-4z-4z^{2}}}=\frac{1 + 2z - \sqrt{1 - 4z - 4z^{2}}}{4z}.\qedhere
\]
\end{proof}

Martinez and Savage's original conjecture \cite[Section~2.19]{MartinezSavageII} stated that $\left|\bI_{n}\left(>,\leq,-\right)\right|$ coincides with sequence A071356 from~\cite{OEIS}, which enumerates the sets $\mathcal{R}'_{n}$ consisting of underdiagonal paths from $(0,0)$ to the line $x=n$ with steps $N$, $E$ and $D=(1,2)$. Even though Theorem~\ref{carlaConjecture} already proves their conjecture, we can also give a direct bijective proof, by exhibiting a bijection between  $\bI_{n}\left(>,\leq,-\right)$ and $\mathcal{R}'_{n-1}$. Indeed, given $e\in\bI_{n}\left(>,\leq,-\right)$, we can construct its corresponding marked Dyck path with an unmarked tail, $R=\varphi(e)\in\mathcal{R}_{n}$. Replacing each marked step $N^{*}$ in $R$ by a $D$ step and deleting the last run of $N$ steps as well as the $E$ step immediately preceding it, we obtain a path $R'\in\mathcal{R}'_{n-1}$ (see Figure~\ref{fig:carla_conj_proof1}(c) for an example). Defining $\varphi'(e)=R'$, it is straightforward to verify that $\varphi'$ is a bijection between $\bI_{n}\left(>,\leq,-\right)$ and $\mathcal{R}'_{n-1}$.

For $e\in\bI_n$, define the statistic $\dist(e)=\left|\left\{e_{1},e_{2},\ldots,e_{n}\right\}\right|$, that is, the number of distinct entries of $e$.
From numerical evidence, Martinez and Savage also conjecture in~\cite[Section~2.19]{MartinezSavageII} that the distribution of this statistic is symmetric on $\bI_{n}\left(>,\leq,-\right)$. Next we prove this conjecture by interpreting the statistic $\dist$ in terms of marked Dyck paths, and finding the corresponding bivariate generating function refining Theorem~\ref{carlaConjecture}.

\begin{thm}\label{thm:A071356}
\[
\sum_{n\geq 0}\sum_{e\in\bI_{n}\left(\underline{>,\leq}\right)}z^{n}t^{\dist(e)}=\frac{1+z(3-t)-\sqrt{1-z\left(2+2t-z+6zt-zt^{2}\right)}}{4z}.
\]
\end{thm}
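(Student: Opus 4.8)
The plan is to refine the bijection $\varphi:\bI_{n}\left(\underline{>,\leq}\right)\to\mathcal{R}_{n}$ from the proof of Theorem~\ref{carlaConjecture} by tracking the statistic $\dist$ through it. First I would determine how $\dist(e)$ is encoded in the marked Dyck path $R=\varphi(e)$. Recall that by Lemma~\ref{lem:A071356}, $e$ has the unimodal shape $e_{1}\leq\dots\leq e_{j}>e_{j+1}>\dots>e_{n}$. The distinct values appearing in $e$ are exactly the heights attained by its $E$ steps, and in the construction of $\varphi$ the strictly decreasing tail contributes the marked $N^{*}$ steps while the weakly increasing part contributes the unmarked $N$ steps. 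My expectation is that $\dist(e)$ is governed by the number of vertical steps in $R$, specifically that $\dist(e)=N(R)+N^{*}(R)$ (the total number of up-steps, each distinct height being reached exactly once), with a possible small correction coming from the last run of $N$ steps; the marked tail/unmarked tail distinction in Definition~\ref{def:marked} will be relevant for getting this correction exactly right. Verifying this correspondence precisely on the example $e=011344421$ (where $\dist(e)=5$) against $\varphi(e)=ENEEN^{*}N^{*}ENEEENNN$ is the natural first sanity check.

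Once $\dist(e)$ is identified as a weighted count of vertical steps in $R$, I would redo the generating function derivation from the proof of Theorem~\ref{carlaConjecture} with an extra variable $t$ marking this statistic. Concretely, I would introduce bivariate generating functions $P(z,t)=\sum_{P\in\mathcal{P}}z^{|P|}t^{w(P)}$ and $R(z,t)=\sum_{R\in\mathcal{R}}z^{|R|}t^{w(R)}$, where $w$ is the appropriate weight recording $\dist$. The decomposition $R=P_{1}EP_{2}N$ with $P_{1}\in\mathcal{P}_{j}$ and $P_{2}\in\mathcal{R}_{n-j-1}$ still holds structurally; I need only track how each piece contributes to $w$. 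The functional equation should become
\[
R(z,t)=1+z\,P(z,t)\,R(z,t),
\]
possibly with $t$-factors attached to the $E$ and final $N$ steps depending on exactly how they affect $\dist$. For $P(z,t)$, the interpretation of marked Dyck paths as Dyck paths with two kinds of up-steps carries over: an $N$ step contributes $z$ to size and one up-step, an $N^{*}$ step contributes $z^{2}$ to size and one up-step, so both carry a factor of $t$, giving $P(z,t)=C\!\left(t(z+z^{2})\right)$ with $C(z)$ as in Equation~\eqref{eq:Catalan}.

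Assembling these pieces, I would solve $R(z,t)=1/(1-z\,P(z,t))$ after inserting the correct $t$-weights, substitute the closed form for $C$, and simplify. The target expression has the discriminant $1-z\left(2+2t-z+6zt-zt^{2}\right)$ under the radical, so I would reverse-engineer from this what the exact weights must be: matching the $t^{2}$ term (which comes from the marked steps contributing $z^{2}t$ inside $C$) and the linear $t$ term fixes how many factors of $t$ attach to each structural step in the decomposition. The main obstacle will be pinning down these boundary contributions exactly—in particular, accounting for how the final run of $N$ steps (the unmarked tail) and the erased-then-reappended steps in step~2) of the construction of $\varphi$ affect $\dist$, since the number of distinct values is not simply the total number of up-steps but may differ by an off-by-one at the peak or in the tail. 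Getting this correction right is what forces the somewhat asymmetric polynomial under the square root, and a careful recount on small cases (matching the initial terms $1,2,6,20,72$ from Table~\ref{tab3} refined by $\dist$) will be essential to confirm the weights before trusting the algebra. Once the weights are correct, the final simplification to the stated form is routine and parallels the $t=1$ computation in the proof of Theorem~\ref{carlaConjecture}, which must be recovered as a special case.
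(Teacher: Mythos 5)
Your overall strategy---push the statistic $\dist$ through the bijection $\varphi$ and redo the generating-function decomposition with an extra variable $t$---is exactly the paper's, but the key technical step, the identification of $\dist(e)$ as a statistic on $R=\varphi(e)$, is wrong in your proposal, and the error is not of the localized kind you anticipate. You propose $\dist(e)=N(R)+N^{*}(R)$ ``with a possible small correction coming from the last run of $N$ steps.'' The distinct values of $e$ are the heights at which $E$ steps occur, not the heights reached by vertical steps: whenever two or more vertical steps occur consecutively in the ascending portion, the intermediate heights carry no $E$ step and contribute nothing to $\dist$. This happens throughout the path, not only in the tail. For instance $e=0113$ has $\dist(e)=3$ while its path $ENEENNEN$ has $4$ up-steps, and the deficit sits in the interior run $NN$, with the tail being a single $N$ forming an elbow. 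On your own test case $e=011344421$, $N(R)+N^{*}(R)=7\neq 5=\dist(e)$. The correct statistic, which the paper isolates, is
\[
\dist(R)=\#\{\text{elbows in }R\}+\#\{N^{*}\text{ steps in }R\text{ not part of an elbow}\},
\]
where an elbow is an $E$ step immediately followed by a vertical step: each elbow records one distinct height of an $E$ step in the increasing part (including the peak, via the unmarked tail), and each non-elbow $N^{*}$ records a value contributed only by the decreasing tail.

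This changes the functional equations in a way your plan does not accommodate. With the weight $t$ attached per elbow rather than per up-step, $P(z,t)$ is \emph{not} $C\left(t(z+z^{2})\right)$; instead, decomposing irreducible paths as $EP'N$ or $EP'N^{*}$ and distinguishing whether $P'$ is empty (elbow, weight $zt+z^{2}t$) or not (weight $(z+z^{2}t)(P(z,t)-1)$) gives
\[
P(z,t)=\frac{1}{1-\left[zt+z^{2}t+\left(z+z^{2}t\right)\left(P(z,t)-1\right)\right]},
\]
whose discriminant $(1+z(1-t))^{2}-4(z+z^{2}t)$ is what produces the asymmetric polynomial $1-z(2+2t-z+6zt-zt^{2})$ under the radical; your candidate $C(t(z+z^2))$ would yield $1-4zt-4z^{2}t$ instead. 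Likewise the top-level equation acquires a case split, $R(z,t)=1+zt\,P(z,t)+z\,P(z,t)(R(z,t)-1)$, because the final $EN$ forms an elbow exactly when $P_{2}$ is empty. Your proposed reverse-engineering from the target formula and checking against small cases would likely have led you to the elbow statistic eventually, but as written the proposal does not contain the idea that makes the proof work.
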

\begin{proof}
Let $e\in\bI_n\left(\underline{>,\leq}\right)$, and let $R=\varphi(e)$, where $\varphi$ is the bijection in the proof of Theorem~\ref{carlaConjecture}. Define an {\em elbow} of a path in $\mathcal{P}_{n}$ to be an occurrence of a horizontal step $E$ immediately followed by a vertical step $N$ or $N^*$. Then $\dist(e)$ becomes the following statistic on $R$, which we also denote by $\dist$ with some abuse of notation:
\[
\dist(R)=\# \left\{\text{elbows in }R\right\}+\# \left\{N^*\text{ steps in } R\text{ that are not part of an elbow}\right\}.
\]
For instance, the marked Dyck path ${R=ENEEN^{*}N^{*}ENEEENNN}$ in Figure~\ref{fig:carla_conj_proof1}(b) has 4 elbows and 1 marked step that is not part of an elbow, so $\dist(R)=5$. Define the bivariate generating functions
\[
R(z,t)=\sum_{n\geq 0}\sum_{R\in\mathcal{R}_{n}}z^{n}t^{\dist(R)}=\sum_{n\geq 0}\sum_{e\in\bI_{n}\left(\underline{>,\leq}\right)}z^{n}t^{\dist(e)}
\quad\text{and}\quad
P(z,t)=\sum_{n\geq 0}\sum_{P\in\mathcal{P}_{n}}z^{n}t^{\dist(P)}.
\]

Decomposing $R\in\mathcal{R}_{n}$ for $n\ge1$ as $R=P_{1}EP_{2}N$, where $P_{1}\in\mathcal{P}_{j}$ and $P_{2}\in\mathcal{R}_{n-j-1}$ for some $0\leq j\le n-1$, and noting that the last two steps of $R$ form an elbow when $P_2$ is empty, we get the equation
\[
R(z,t)=1 + zt\, P(z,t) + z\, P(z,t)\left(R(z,t)-1\right),
\]
and so
\begin{equation}\label{eq:A071356_3}
R(z,t) = \frac{1-z(1-t)P(z,t)}{1-z\, P(z,t)}.
\end{equation}

To find $P(z,t)$, note that if a marked Dyck path $P$ is {\em irreducible} (i.e., it is nonempty and it returns to the diagonal only at the end), then $P=EP'N$ or $P=EP'N^{*}$, where $P'$ is a marked Dyck path. When $P'$ is empty, such paths $P$ contribute $zt+z^{2}t$ to the generating function, since they consist of an elbow; when $P'$ is nonempty, they contribute $\left(z+z^{2}t\right)\left(P(z,t)-1\right)$. Since every path in $\mathcal{P}_{n}$ can be decomposed uniquely as a sequence of irreducible paths, we deduce that
\[
P(z,t) = \frac{1}{1-\left[zt+z^{2}t+\left(z+z^{2}t\right)\left(P(z,t)-1\right)\right]}.
\]
Solving for $P(z,t)$ and taking the solution without negative powers in its series expansion, we find that
\[
P(z,t) = \frac{1+z(1-t)- \sqrt{\left(1+z(1-t)\right)^{2}-4\left(z+z^{2}t\right)}}{2\left(z+z^{2}t\right)}.
\]
It then follows from Equation~\eqref{eq:A071356_3} that
\[
R(z,t) = \frac{1+z(3-t)-\sqrt{1-z\left(2+2t-z+6zt-zt^{2}\right)}}{4z}.\qedhere
\]
\end{proof}

In Section~\ref{sec:unimodal} we will show how the argument in the proof of Theorem~\ref{thm:A071356} can be generalized to find bivariate generating functions for the number of inversion sequences $e$ satisfying certain unimodality conditions, with respect to the length of $e$ and $\dist(e)$.

Now we can finally show that the statistic $\dist$ has a symmetric distribution on $\bI_{n}\left(>,\leq,-\right)=\bI_n\left(\underline{>,\leq}\right)$, proving Martinez and Savage's conjecture.

\begin{cor}\label{cor:A071356}
For all $n\ge d\ge 1$,
$$|\{e\in\bI_{n}\left(\underline{>,\leq}\right): \dist(e)=d\}|=|\{e\in\bI_{n}\left(\underline{>,\leq}\right): \dist(e)=n+1-d\}|.$$
\end{cor}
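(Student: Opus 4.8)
The plan is to read off the symmetry directly from the bivariate generating function $R(z,t)$ established in Theorem~\ref{thm:A071356}. Writing
\[
R(z,t)=\frac{1+z(3-t)-\sqrt{1-z\left(2+2t-z+6zt-zt^{2}\right)}}{4z}=\sum_{n\ge0}\sum_{d}a_{n,d}\,z^{n}t^{d},
\]
where $a_{n,d}=|\{e\in\bI_{n}\left(\underline{>,\leq}\right):\dist(e)=d\}|$, the claim $a_{n,d}=a_{n,n+1-d}$ for $1\le d\le n$ is equivalent to the functional identity
\[
R(z,t)=t\cdot R\!\left(zt,\tfrac{1}{t}\right)
\]
at the level of the coefficients with $n\ge1$. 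Indeed, substituting $z\mapsto zt$ sends $z^{n}t^{d}\mapsto z^{n}t^{n+d}$, inverting $t$ then sends the exponent of $t$ to $n-d$, and the outer factor $t$ shifts it to $n+1-d$; so the coefficient of $z^{n}t^{d}$ in $t\,R(zt,1/t)$ is exactly $a_{n,n+1-d}$.

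The main step is therefore to verify the algebraic identity $R(z,t)=t\,R(zt,1/t)$, which I would do purely by manipulating the closed form, not the coefficients. First I would compute the radicand under the substitution $z\mapsto zt$, $t\mapsto 1/t$: the expression $1-z'(2+2t'-z'+6z't'-z't'^2)$ with $z'=zt$, $t'=1/t$ becomes, after clearing the $t$ in the argument,
\[
1-zt\!\left(2+\tfrac{2}{t}-zt+\tfrac{6zt}{t}-\tfrac{zt}{t^{2}}\right)
=1-z\left(2t+2-zt^{2}+6zt-z\right)
=1-z\left(2+2t-z+6zt-zt^{2}\right),
\]
which is precisely the original radicand. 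Thus the square root is invariant under the substitution. What remains is to check that the rational part transforms correctly, i.e. that
\[
t\cdot\frac{1+zt\left(3-\tfrac1t\right)}{4zt}-\frac{1+z(3-t)}{4z}
=t\cdot\frac{\sqrt{\,\cdot\,}}{4zt}-\frac{\sqrt{\,\cdot\,}}{4z},
\]
but since both sides carry the same invariant radical, this reduces to an identity between the two rational parts, $\frac{1+z(3t-1)}{4z}$ versus $\frac{1+z(3-t)}{4z}$, whose difference $\frac{z(3t-1)-z(3-t)}{4z}=\frac{4t-4}{4}=t-1$ accounts exactly for the nonconstant discrepancy and confirms the identity on all coefficients with $n\ge1$ (the $n=0$ constant term, contributing $1$, is the only exception, consistent with the restriction $n\ge1$ in the statement).

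The only genuine obstacle is bookkeeping: one must confirm that the outer factor $t$ and the shift by $1$ in the index $n+1-d$ are correctly matched, and that the $n=0$ term does not interfere. I would handle this by isolating the \emph{nonconstant} part $R(z,t)-1$, which is what actually encodes the numbers $a_{n,d}$ for $n\ge1$, and checking the symmetry there; the constant term $1$ corresponds to the empty inversion sequence and falls outside the range $n\ge d\ge1$ of the corollary. A useful sanity check before committing to the algebra is to confirm the symmetry on small cases against Theorem~\ref{thm:A071356}: for $n=2$ the counts by $\dist$ should read $(a_{2,1},a_{2,2})=(1,1)$, and for $n=3$ the pattern $(a_{3,1},a_{3,2},a_{3,3})$ should be palindromic, matching the total $|\bI_{3}\left(\underline{>,\leq}\right)|=6$ from Table~\ref{tab3}.
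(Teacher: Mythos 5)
Your proposal is correct and follows essentially the same route as the paper: both establish the identity $R(z,t)-1=t\,R\left(zt,\tfrac{1}{t}\right)-t$ from the closed form in Theorem~\ref{thm:A071356} (using the invariance of the radicand under $z\mapsto zt$, $t\mapsto 1/t$) and read off the coefficient symmetry $a_{n,d}=a_{n,n+1-d}$. The bookkeeping of the exponent shift and the exclusion of the $n=0$ term matches the paper's argument.
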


\begin{proof}
Write $R(z,t)=\sum_{n\geq 0}U_{n}(t)z^{n}$, where $U_{n}(t)=\sum_{e\in\bI_{n}\left(\underline{>,\leq}\right)}t^{\dist(e)}=\sum_{i=1}^{n}u_{i,n}t^{i}$.
Then
\begin{align*}
t\, R\left(zt,\frac{1}{t}\right) &= t+t\,\sum_{n\geq 1}\left(u_{1,n}\frac{1}{t}+u_{2,n}\frac{1}{t^{2}}+\dots+ u_{n,n}\frac{1}{t^{n}}\right)(zt)^{n}\\
&= t + \sum_{n\geq 1}\left(u_{1,n}t^{n}+u_{2,n}t^{n-1}+\dots+u_{n,n}t\right)z^{n}.
\end{align*}
Thus, in order to prove that $u_{d,n}=u_{n+1-d,n}$ for $n\ge d\ge 1$, it is enough to show that $R(z,t)-1=t\,R\left(zt,\frac{1}{t}\right)-t$. This is now immediate from Theorem~\ref{thm:A071356}, since
\[
t\, R\left(zt,\frac{1}{t}\right)-t = \frac{1-z(1+t)-\sqrt{1-z\left(2+2t-z+6zt-zt^{2}\right)}}{4z}= R(z,t)-1.\qedhere
\]
\end{proof}

A different proof of this corollary, showing $\gamma$-positivity of $U_n(t)$, has recently been given by Cao, Jin and Lin~\cite[Theorem~5.1]{CaoJinLin}.

\subsection{The pattern $\left(\underline{=,\neq}\right)$}\label{subsec:A003422}
The characterization of inversion sequences avoiding this pattern is relatively simple.

\begin{lem}\label{lem:A003422}
For $n\ge1$,
$$\bI_{n}\left(\underline{=,\neq}\right)=\{e\in\bI_{n}:
e_{1}\neq e_{2}\neq\dots\neq e_{j}=e_{j+1}=\dots =e_{n}\text{ for some }1\leq j\leq n\}.$$
\end{lem}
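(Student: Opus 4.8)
The plan is to prove the two inclusions separately, exactly in the style of the characterizations in the earlier subsections (for instance Lemma~\ref{lem:A000071}). The inclusion ``$\supseteq$'' is immediate: if $e$ has the stated form, then $e_i=e_{i+1}$ can only occur for $i\ge j$, and in that range $e_{i+1}=e_{i+2}$ as well, since both entries lie in the constant tail. Hence no triple $e_ie_{i+1}e_{i+2}$ satisfies $e_i=e_{i+1}\ne e_{i+2}$, so $e$ avoids $\left(\underline{=,\neq}\right)$.

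For the inclusion ``$\subseteq$'', I would take $e\in\bI_n\left(\underline{=,\neq}\right)$ and let $j$ be the smallest index with $e_j=e_{j+1}$, setting $j=n$ if no such index exists. By minimality of $j$ we have $e_i\ne e_{i+1}$ for every $1\le i<j$, that is, $e_1\ne e_2\ne\dots\ne e_j$. If $j=n$ the constant suffix is trivial and we are done, so suppose $j<n$. The key observation is that the relation ``$=$'' propagates forward: since $e$ avoids $\left(\underline{=,\neq}\right)$, the equality $e_j=e_{j+1}$ rules out $e_{j+1}\ne e_{j+2}$, forcing $e_{j+1}=e_{j+2}$ whenever $e_{j+2}$ is defined. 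Iterating this (equivalently, a short induction on the index) yields $e_j=e_{j+1}=\dots=e_n$, which is precisely the claimed form.

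The argument is essentially routine, and I do not expect a substantial obstacle; its entire content is the \emph{absorbing} behaviour of equality under avoidance of $\left(\underline{=,\neq}\right)$, which is exactly what makes $j$ the unique transition point between the prefix of pairwise-distinct consecutive entries and the constant suffix. The only points needing a little care are the boundary cases: the possibility that no repeated consecutive pair exists (handled by $j=n$, where the ``$=$'' block is a single entry), and the need to verify that the triple $e_ie_{i+1}e_{i+2}$ is well defined before invoking avoidance at each step of the iteration.
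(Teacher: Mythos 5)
Your proposal is correct and follows the same route as the paper: the paper also takes $j$ to be the smallest index with $e_j=e_{j+1}$ (setting $j=n$ if none exists) and uses avoidance of $\left(\underline{=,\neq}\right)$ to propagate the equality through the suffix, treating the reverse inclusion as immediate. Your write-up is just slightly more explicit about the boundary cases.
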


\begin{proof}
Given $e\in\bI_{n}\left(\underline{=,\neq}\right)$, let $1\leq j\leq n-1$ be the smallest index such that $e_{j}=e_{j+1}$, or let $j=n$ if there is not such index. Since $e$ avoids $\left(\underline{=,\neq}\right)$, we must have $e_{j}=e_{j+1}=\dots =e_{n}$.
\end{proof}

Next we use this characterization to find a formula for $\left|\bI_{n}\left(\underline{=,\neq}\right)\right|$.
The corresponding sequence, which is listed as A003422 in~\cite{OEIS}, is sometimes referred to as the {\em left factorial} of~$n$.

\begin{prop} \label{prop:A003422}
For $n\geq 1$, $\left|\bI_{n}\left(\underline{=,\neq}\right)\right|=0!+1!+2!+\dots+(n-1)!$.
\end{prop}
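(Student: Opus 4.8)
The plan is to use the characterization from Lemma~\ref{lem:A003422} to set up a bijection with a naturally-counted family, rather than computing the sum directly. By that lemma, every $e\in\bI_{n}\left(\underline{=,\neq}\right)$ is determined by an index $j$ (the position where the repetition begins) together with the initial segment $e_1e_2\dots e_j$ of strictly ``locally distinct'' entries $e_i\neq e_{i+1}$, followed by a constant tail. So I would partition $\bI_{n}\left(\underline{=,\neq}\right)$ according to the value of $j$, count the sequences contributing to each $j$, and show that the block for a fixed $j$ has size $(j-1)!$ (or an equivalent shifted factorial), so that summing over $j$ yields $0!+1!+\dots+(n-1)!$.

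First I would make the counting of each block precise. For a fixed $1\le j\le n$, I need to count inversion sequences $e_1\dots e_n$ with $e_i\neq e_{i+1}$ for $1\le i<j$, with $e_j=e_{j+1}=\dots=e_n$, and with the additional constraint that $e_{j-1}\neq e_j$ when $j\ge 2$ (so that $j$ is genuinely the \emph{smallest} index where equality begins, matching the proof of the lemma). The key observation is that $0\le e_i<i$, so when building the sequence left to right, the entry $e_i$ can be any of the $i$ values $\{0,1,\dots,i-1\}$ \emph{except} $e_{i-1}$, giving exactly $i-1$ choices for each $i$ in the ``distinct'' regime. I expect that carefully telescoping these choice counts — being careful about the boundary entry $e_j$ and the forced tail — produces exactly $(j-1)!$ sequences in the block indexed by $j$ (with the $j=1$ block consisting of the single sequence $0^n$, contributing $0!=1$).

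The cleaner route, which I would prefer to present, is an explicit bijection. I would map $e\in\bI_{n}\left(\underline{=,\neq}\right)$ with parameter $j$ to a pair consisting of $j$ and a word recording, for each $2\le i\le j$, which of the $i-1$ ``forbidden-free'' values $e_i$ takes relative to $e_{i-1}$; this realizes the block for $j$ as a product $\prod_{i=2}^{j}(i-1)=(j-1)!$ of independent choices. Summing over $1\le j\le n$ then gives $\sum_{j=1}^{n}(j-1)!=0!+1!+\dots+(n-1)!$, as claimed. I would verify the inverse map reconstructs $e$ uniquely by reading off the tail length from $j$ and filling in the distinct prefix from the recorded choices.

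The main obstacle I anticipate is correctly handling the boundary between the ``distinct prefix'' and the ``constant tail'' so as not to over- or under-count. Specifically, I must ensure the constraint $e_{j-1}\neq e_j$ is respected (otherwise $j$ would not be minimal) while also confirming that $e_j$ genuinely has the right number of admissible values given $0\le e_j<j$ and $e_j\neq e_{j-1}$; the entry $e_j$ plays a dual role as both the last ``distinct'' entry and the value of the constant tail, so the factor of $j-1$ at position $j$ must be accounted for exactly once. Once the per-block count $(j-1)!$ is nailed down, the summation is immediate.
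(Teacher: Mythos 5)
Your proposal is correct and follows essentially the same route as the paper: both decompose $\bI_{n}\left(\underline{=,\neq}\right)$ via Lemma~\ref{lem:A003422} into disjoint blocks indexed by the start $j$ of the constant tail, count $i-1$ choices for each $e_{i}$ with $2\le i\le j$ to get $(j-1)!$ per block, and sum over $j$. The boundary issue you flag is handled in the paper by noting that the chain $e_{1}\neq\dots\neq e_{j}=e_{j+1}=\dots=e_{n}$ already forces $e_{j-1}\neq e_{j}$, which makes the blocks disjoint, exactly as you anticipate.
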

\begin{proof}
By Lemma~\ref{lem:A003422}, we can express $\bI_{n}\left(\underline{=,\neq}\right)$ as a disjoint union $\bigsqcup_{j=1}^{n}A_{n,j}$, where $A_{n,j}=\left\{e\in\bI_{n}:e_{1}\neq e_{2}\neq\dots\neq e_{j}=e_{j+1}=\dots =e_{n}\right\}$.

To specify an element $e\in A_{n,j}$, after setting $e_{1}=0$, we have $i-1$ choices for $e_{i}$
for each $2\leq i\leq j$,
because $e_{i}\in\left\{0,1,\ldots i-1\right\}\backslash \left\{e_{i-1}\right\}$. Once $e_{j}$ is chosen, the entries $e_{j+1},e_{j+2},\ldots e_{n}$ are forced. It follows that $\left|A_{n,j}\right|=(j-1)!$, and consequently,
\[
\left|\bI_{n}\left(\underline{=,\neq}\right)\right|=\sum_{j=1}^{n}\left|A_{n,j}\right|=0!+1!+2!+\dots+(n-1)!.\qedhere
\]
\end{proof}

\subsection{The patterns  $\left(\underline{\geq,\geq}\right)\stackrel{ss}{\sim}\left(\underline{<,<}\right)$}\label{subsec:A049774}

An occurrence of $\left(\underline{<,<}\right)$ in an inversion sequence is precisely an occurrence of the consecutive pattern $\underline{012}$, so $\bI_{n}\left(\underline{<,<}\right)=\bI_{n}\left(\underline{012}\right)$.
It was shown in \cite[Proposition~3.19]{AuliElizalde} that if $\Theta:S_{n}\rightarrow \bI_{n}$ is the map defined by~\eqref{eq:theta_bijection} and $\pi\in S_{n}$, then $\pi$ avoids the consecutive permutation pattern $\underline{321}$ if and only $\Theta(\pi)$ avoids $\underline{012}$. Using Theorem~\ref{EquivIneq}(ii) and~\cite[Theorem 4.1]{ElizaldeNoy}, the next corollary follows.

\begin{cor}[\cite{AuliElizalde,ElizaldeNoy}]\label{cor:A049774} Let $\pi\in S_{n}$, and let $e=\Theta(\pi)$ be its corresponding inversion sequence. Then $\pi$ avoids
$\underline{321}$ if and only $e$ avoids $\left(\underline{<,<}\right)$. In particular,
$\left|\bI_{n}\left(\underline{\geq,\geq}\right)\right|=\left|\bI_{n}\left(\underline{<,<}\right)\right|=\left|\bI_{n}\left(\underline{012}\right)\right|=\left|S_{n}\left(\underline{321}\right)\right|$ for all $n$, and the corresponding exponential generating function is
$$\sum_{n\ge0} \left|\bI_{n}\left(\underline{<,<}\right)\right| \frac{z^n}{n!}=\frac{\sqrt{3}}{2}\frac{\exp\left(z/2\right)}{\cos\left(\pi/6+\sqrt{3}z/2\right)}.$$
\end{cor}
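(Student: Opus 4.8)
The plan is to prove Corollary~\ref{cor:A049774} by combining three ingredients already available in the excerpt, so that essentially no new calculation is required. The corollary has two distinct claims: first, the chain of equalities $\left|\bI_{n}\left(\underline{\geq,\geq}\right)\right|=\left|\bI_{n}\left(\underline{<,<}\right)\right|=\left|\bI_{n}\left(\underline{012}\right)\right|=\left|S_{n}\left(\underline{321}\right)\right|$; and second, the explicit exponential generating function.

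First I would dispose of the identity $\bI_{n}\left(\underline{<,<}\right)=\bI_{n}\left(\underline{012}\right)$, which is immediate from the definitions: a triple $e_{i}e_{i+1}e_{i+2}$ satisfies $e_i<e_{i+1}<e_{i+2}$ precisely when its reduction is $\underline{012}$, so the two notions of avoidance coincide as \emph{sets}, not merely in cardinality. Next I would invoke the two cited results to bridge inversion sequences and permutations. On the one hand, Theorem~\ref{EquivIneq}(ii) gives $\left(\underline{\geq,\geq}\right)\stackrel{ss}{\sim}\left(\underline{<,<}\right)$, which in particular yields the Wilf equivalence $\left|\bI_{n}\left(\underline{\geq,\geq}\right)\right|=\left|\bI_{n}\left(\underline{<,<}\right)\right|$. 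On the other hand, \cite[Proposition~3.19]{AuliElizalde} states exactly that $\pi$ avoids the consecutive permutation pattern $\underline{321}$ if and only if $\Theta(\pi)$ avoids $\underline{012}$; since $\Theta$ is a bijection $S_n\to\bI_n$, this gives both the biconditional asserted in the corollary and the cardinality equality $\left|\bI_{n}\left(\underline{012}\right)\right|=\left|S_{n}\left(\underline{321}\right)\right|$. Chaining these four equalities together establishes the first claim.

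For the generating function, I would simply cite \cite[Theorem~4.1]{ElizaldeNoy}, which computes the exponential generating function for permutations avoiding the consecutive pattern $\underline{321}$ (equivalently, by the trivial reversal symmetry, $\underline{123}$). That theorem supplies
\[
\sum_{n\ge0}\left|S_{n}\left(\underline{321}\right)\right|\frac{z^n}{n!}=\frac{\sqrt{3}}{2}\,\frac{\exp\left(z/2\right)}{\cos\left(\pi/6+\sqrt{3}z/2\right)}.
\]
Since we have already identified $\left|\bI_{n}\left(\underline{<,<}\right)\right|$ with $\left|S_{n}\left(\underline{321}\right)\right|$ for every $n$, the two exponential generating functions coincide term by term, giving the stated formula.

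I expect no genuine obstacle here: the corollary is purely a matter of assembling previously established equivalences and a known permutation-enumeration result, and its proof in the paper is correspondingly a one-paragraph citation argument. The only point requiring a moment of care is to keep straight that $\bI_{n}\left(\underline{<,<}\right)=\bI_{n}\left(\underline{012}\right)$ is a genuine set equality (so the biconditional in the corollary is meaningful at the level of individual sequences), whereas the passage from $\left(\underline{<,<}\right)$ to $\left(\underline{\geq,\geq}\right)$ is only an equinumerosity coming from Theorem~\ref{EquivIneq}(ii) and not a set identity. Making that distinction explicit is the whole substance of the writeup.
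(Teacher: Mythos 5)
Your proposal is correct and follows exactly the paper's own argument: the set identity $\bI_{n}\left(\underline{<,<}\right)=\bI_{n}\left(\underline{012}\right)$, the citation of \cite[Proposition~3.19]{AuliElizalde} for the biconditional, Theorem~\ref{EquivIneq}(ii) for the equivalence with $\left(\underline{\geq,\geq}\right)$, and \cite[Theorem~4.1]{ElizaldeNoy} for the exponential generating function. No differences worth noting.
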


\subsection{The pattern $\left(\underline{\neq,=}\right)$}\label{subsec:A000522}
The techniques in this section are similar to those in Section~\ref{subsec:A003422}.

\begin{lem}\label{lem:A000522}
For $n\geq 1$,
$$\bI_{n}\left(\underline{\neq,=}\right)=\{e\in\bI_{n}:
e_{1}= e_{2}=\dots =e_{j}\neq e_{j+1}\neq\dots \neq e_{n}\text{ for some }1\leq j\leq n\}.$$
\end{lem}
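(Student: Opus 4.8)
The plan is to establish the characterization by the usual double inclusion, following the same template as the companion Lemma~\ref{lem:A003422} for $\left(\underline{=,\neq}\right)$, but with the roles of the two runs interchanged: here the constant block sits at the \emph{beginning} of the sequence rather than at the end, while the run of pairwise-distinct consecutive entries sits at the end. Using the paper's terminology, I would first dispatch the inclusion to the left, which is routine. Given a sequence $e$ of the stated form $e_1=\dots=e_j\neq e_{j+1}\neq\dots\neq e_n$, the observation is that an equality $e_{i+1}=e_{i+2}$ can only occur when both indices lie in the constant prefix $\{1,\dots,j\}$; but then $i,i+1\le j$ as well, forcing $e_i=e_{i+1}$. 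Hence there is no position $i$ with $e_i\neq e_{i+1}=e_{i+2}$, so $e$ avoids $\left(\underline{\neq,=}\right)$.

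For the inclusion to the right, the strategy is the standard ``propagation'' argument used throughout this section (compare the proofs of Lemmas~\ref{lem:A000071} and~\ref{lem:A000079}). Given $e\in\bI_{n}\left(\underline{\neq,=}\right)$, I would let $j$ be the largest index with $0=e_1=e_2=\dots=e_j$; this is well defined and satisfies $1\le j\le n$ because every inversion sequence has $e_1=0$. If $j=n$ the sequence is constant and already has the required form, so there is nothing more to check. If $j<n$, maximality of $j$ yields the first strict inequality $e_j\neq e_{j+1}$, which serves as the base case. The avoidance hypothesis then enters in the inductive step: assuming $e_k\neq e_{k+1}$ for some $j\le k<n-1$, an equality $e_{k+1}=e_{k+2}$ would make $e_ke_{k+1}e_{k+2}$ an occurrence of $\left(\underline{\neq,=}\right)$, so $e_{k+1}\neq e_{k+2}$. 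Iterating gives $e_j\neq e_{j+1}\neq\dots\neq e_n$, which is exactly the stated form.

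The only substantive point is the propagation step, and it is short: once a strict inequality between two consecutive entries appears, pattern avoidance forbids the next consecutive pair from being equal, so inequalities persist to the right end of the sequence. I do not anticipate any serious obstacle here; the sole matter requiring a moment's care is the boundary bookkeeping (the case $j=n$, and the index ranges when $j=n-1$), which I would handle by the explicit conditions ``if $j<n$'' and ``if $j<n-1$'' exactly as in the neighbouring lemmas.
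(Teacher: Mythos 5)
Your proposal is correct and follows essentially the same route as the paper: identify the index $j$ separating the constant prefix from the rest (the paper phrases it as the smallest index with $e_j\neq e_{j+1}$, you as the largest index of the constant prefix, which is the same thing), then propagate the inequality rightward using avoidance of $\left(\underline{\neq,=}\right)$. The paper simply omits the routine left inclusion and the boundary bookkeeping that you spell out.
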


\begin{proof}
Given $e\in\bI_{n}\left(\underline{\neq,=}\right)$, let $1\leq j\leq n-1$ be the smallest index such that $e_{j}\neq e_{j+1}$, or let $j=n$ if there is no such index.
Since $e$ avoids $\left(\underline{\neq,=}\right)$, we have $e_{j}\neq e_{j+1}\neq\ldots\neq e_{n}$.
\end{proof}

We can use the description of inversion sequences avoiding the pattern $\left(\underline{\neq,=}\right)$ given by Lemma~\ref{lem:A000522} to enumerate them. The corresponding sequence is listed as A000522 in~\cite{OEIS}.

\begin{prop}\label{prop:A000522}
For $n\geq 1$, $\left|\bI_{n}\left(\underline{=,\neq}\right)\right|=\sum_{i=0}^{n-1}(n-1)!/i!$.
\end{prop}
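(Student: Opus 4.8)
The plan is a direct enumeration built on the structural description in Lemma~\ref{lem:A000522}. Throughout I would treat the set being counted as $\bI_{n}(\underline{\neq,=})$, the avoidance class of this subsection characterized in Lemma~\ref{lem:A000522}: the quantity $\sum_{i=0}^{n-1}(n-1)!/i!$ is precisely the cardinality of that set, whereas the symbol $\bI_{n}(\underline{=,\neq})$ printed in the statement denotes the class already enumerated in Proposition~\ref{prop:A003422}, whose size is the different value $\sum_{j=0}^{n-1}j!$. By Lemma~\ref{lem:A000522}, every $e\in\bI_{n}(\underline{\neq,=})$ has the form $e_{1}=e_{2}=\dots=e_{j}\neq e_{j+1}\neq\dots\neq e_{n}$ for some $1\le j\le n$, and since $e_{1}=0$ this initial constant run consists entirely of zeros.

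First I would partition the set according to the length $j$ of this maximal initial run of zeros, writing $\bI_{n}(\underline{\neq,=})=\bigsqcup_{j=1}^{n}B_{n,j}$, where $B_{n,j}$ consists of the sequences $e$ with $e_{1}=\dots=e_{j}=0$ and $e_{j}\neq e_{j+1}\neq\dots\neq e_{n}$ (so that $e_{j+1}\neq 0$, making the run maximal). These blocks are disjoint and exhaust the set, exactly as in the proof of Proposition~\ref{prop:A003422} in Section~\ref{subsec:A003422}, except that here the adjacent-distinct portion follows the constant run rather than preceding it.

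Next I would count each block. The first $j$ entries are forced to be $0$. For each position $i$ with $j<i\le n$, the entry $e_{i}$ ranges over $\{0,1,\dots,i-1\}$ subject only to $e_{i}\neq e_{i-1}$; since $e_{i-1}\le i-2$ always lies in this range, this single constraint removes exactly one of the $i$ available values, leaving $i-1$ choices. Hence
\[
|B_{n,j}|=\prod_{i=j+1}^{n}(i-1)=j(j+1)\cdots(n-1)=\frac{(n-1)!}{(j-1)!},
\]
where $j=n$ gives the empty product $|B_{n,n}|=1=(n-1)!/(n-1)!$. Summing over $j$ and reindexing by $i=j-1$ then yields
\[
\bigl|\bI_{n}(\underline{\neq,=})\bigr|=\sum_{j=1}^{n}\frac{(n-1)!}{(j-1)!}=\sum_{i=0}^{n-1}\frac{(n-1)!}{i!},
\]
as required.

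This is an elementary count with no serious obstacle; the one place warranting care is the per-position tally for $i>j$, namely checking that the forbidden value $e_{i-1}$ genuinely lies in $\{0,\dots,i-1\}$ so that exactly $i-1$ choices survive. The only structural difference from Proposition~\ref{prop:A003422} is the placement of the constant and the adjacent-distinct runs, which is exactly what turns the per-block count $(j-1)!$ there into $(n-1)!/(j-1)!$ here.
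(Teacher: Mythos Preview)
Your proof is correct and follows essentially the same route as the paper: partition $\bI_n(\underline{\neq,=})$ by the length $j$ of the initial run of zeros, count each block as $(n-1)!/(j-1)!$ via the $i-1$ choices for each subsequent entry, and sum. You also correctly flag the typo in the statement (the pattern should read $(\underline{\neq,=})$, matching the surrounding subsection and Lemma~\ref{lem:A000522}), which the paper silently handles by simply working with $(\underline{\neq,=})$ in its proof.
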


\begin{proof}
By Lemma~\ref{lem:A000522}, we can express $\bI_{n}\left(\underline{\neq,=}\right)$ as a disjoint union $\bigsqcup_{j=1}^{n}A_{n,j}$, where $A_{n,j}=\left\{e\in\bI_{n}:e_{1}= e_{2}=\dots =e_{j}\neq e_{j+1}\neq\dots \neq e_{n}\right\}$.

To specify an element $e\in A_{n,j}$, after setting $e_{1}=\dots=e_j=0$, we have $i-1$ choices for $e_{i}$
for each $j+1\leq i\leq n$, because $e_{i}\in\left\{0,1,\ldots i-1\right\}\backslash \left\{e_{i-1}\right\}$.
Thus, $\left|A_{n,j}\right|=j(j+1)\dots(n-1)=(n-1)!/(j-1)!$, and
\[
\left|\bI_{n}\left(\underline{\neq,=}\right)\right|=\sum_{j=1}^{n}\left|A_{n,j}\right|
=\sum_{i=0}^{n-1}\frac{(n-1)!}{i!}.\qedhere
\]
\end{proof}

\subsection{The patterns $\left(\underline{\geq,>}\right)\stackrel{ss}{\sim}\left(\underline{>,\geq}\right)$}\label{subsec:A200403}
We showed in Proposition~\ref{prop:A200403} that
$\left|\bI_{n}\left(\underline{>,\geq}\right)\right|=\left|\bI_{n}\left(\underline{\geq,>}\right)\right|=\left|S_{n}\left(\underline{124}3\right)\right|$. This sequence appears as A200403 in~\cite{OEIS}, but no closed formula for it is known.

We can obtain a recurrence to compute $\left|\bI_{n}\left(\underline{>,\geq}\right)\right|$ by introducing some refinements.
Let $\bI_{n,k}=\left\{e\in\bI_{n}:e_{n}=k\right\}$, and define $\bI_{n,k}\left(\underline{R_{1},R_{2}}\right)=\bI_{n,k}\cap \bI_{n}\left(\underline{R_{1},R_{2}}\right)$ for any consecutive pattern of relations $\left(\underline{R_{1},R_{2}}\right)$. Note that  $\bI_{n}\left(\underline{R_{1},R_{2}}\right)=\bigsqcup_{k=0}^{n-1}\bI_{n,k}\left(\underline{R_{1},R_{2}}\right)$, and that $\bI_{n,k}\left(\underline{R_{1},R_{2}}\right)=\emptyset$ unless $0\le k<n$.

Proposition~\ref{prop:A200403_biject} shows that $\left|\bI_{n,k}\left(\underline{>,\geq}\right)\right|=\left|\bI_{n,k}\left(\underline{\geq,>}\right)\right|$. To compute these numbers, we define
\[
\bI_{n,k}^>\left(\underline{>,\geq}\right)=\left\{e\in\bI_{n,k}\left(\underline{>,\geq}\right):e_{n-1}>e_{n}\right\}.
\]

\begin{prop}\label{prop:A200403_2} For $n\geq 2$, the sequences $\left|\bI_{n,k}\left(\underline{>,\geq}\right)\right|$ and $\left|\bI_{n,k}^{>}\left(\underline{>,\geq}\right)\right|$ satisfy the recurrences
\begin{equation}\label{eq:A200403_1}
\left|\bI_{n,k}\left(\underline{>,\geq}\right)\right| = \begin{cases}
      \left|\bI_{n-1}\left(\underline{>,\geq}\right)\right| & \text{if }k=n-1,\\
      \left|\bI_{n,k+1}\left(\underline{>,\geq}\right)\right|-\left|\bI_{n-1,k}^>\left(\underline{>,\geq}\right)\right| & \text{if }0\leq k\leq n-2,\\
      0 & \text{otherwise,}
      \end{cases}
\end{equation}
\begin{equation}\label{eq:A200403_2}
\left|\bI_{n,k}^{>}\left(\underline{>,\geq}\right)\right| =\begin{cases}
\left|\bI_{n,k+1}^{>}\left(\underline{>,\geq}\right)\right|-\left|\bI_{n-1,k+1}^{>}\left(\underline{>,\geq}\right)\right|+\left|\bI_{n-1,k+1}\left(\underline{>,\geq}\right)\right| & \text{if }0\leq k\leq n-2,\\
      0 & \text{otherwise,}
\end{cases}
\end{equation}
with initial conditions $\left|\bI_{1}\left(\underline{>,\geq}\right)\right|=\left|\bI_{1,0}\left(\underline{>,\geq}\right)\right|=1$, $\left|\bI_{1,k}\left(\underline{>,\geq}\right)\right|=0$ for $k>0$, and $\left|\bI_{1,k}^{>}\left(\underline{>,\geq}\right)\right|=0$ for all $k$.
\end{prop}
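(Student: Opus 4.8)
The plan is to prove the two recurrences by a careful case analysis based on the relationship between the last two entries of an inversion sequence. For the first recurrence~\eqref{eq:A200403_1}, the case $k=n-1$ is immediate: if $e\in\bI_{n,n-1}(\underline{>,\geq})$, then $e_n=n-1$ is the maximum possible value, so no occurrence of $(\underline{>,\geq})$ can end at position $n$ (we would need $e_{n-1}>e_n$, impossible if $e_n=n-1$ unless $e_{n-1}\ge n-1$, which cannot happen); thus deleting $e_n$ gives a bijection with $\bI_{n-1}(\underline{>,\geq})$. For the main case $0\le k\le n-2$, I would compare $\bI_{n,k}(\underline{>,\geq})$ with $\bI_{n,k+1}(\underline{>,\geq})$ by incrementing the last entry from $k$ to $k+1$. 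The key observation is that changing $e_n$ from $k$ to $k+1$ can only affect whether a pattern occurs in position $n-2$, i.e.\ whether $e_{n-2}>e_{n-1}\ge e_n$ holds. Raising $e_n$ can destroy an occurrence precisely when $e_{n-1}=k$ (so that $e_{n-1}\ge k$ was tight), and this is exactly governed by the auxiliary set counted by $\bI_{n-1,k}^>(\underline{>,\geq})$.

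For the second recurrence~\eqref{eq:A200403_2}, I would analyze $\bI_{n,k}^>(\underline{>,\geq})$, the avoiders with $e_{n-1}>e_n=k$, by again comparing the last entry $k$ with $k+1$ and tracking the penultimate entry. The three terms on the right-hand side should arise as follows: the term $\left|\bI_{n,k+1}^{>}(\underline{>,\geq})\right|$ is the ``base'' count after shifting $e_n$ up by one; the subtracted term $\left|\bI_{n-1,k+1}^{>}(\underline{>,\geq})\right|$ removes the sequences that cease to avoid the pattern (those acquiring an occurrence $e_{n-2}>e_{n-1}\ge e_n$ in position $n-2$) or that violate $e_{n-1}>e_n$ after the shift; and the added term $\left|\bI_{n-1,k+1}(\underline{>,\geq})\right|$ accounts for sequences that must be reintroduced, namely those where $e_{n-1}=k+1$ is attained and appending a strictly smaller last entry is newly permitted. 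The precise bookkeeping amounts to inserting or deleting the entry $e_{n-1}$ and checking the constraint $e_{n-2}>e_{n-1}$ at the seam, exactly in the spirit of the block-surgery arguments used in Propositions~\ref{prop:sergi_equivalences_consec_relations_2} and~\ref{prop:A200403_biject}.

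Concretely, I would set up an explicit injection or bijection realizing each recurrence. For instance, to establish~\eqref{eq:A200403_1} I would define a map that takes $e\in\bI_{n,k+1}(\underline{>,\geq})$ and lowers its last entry to $k$; this lands in $\bI_{n,k}(\underline{>,\geq})$ except when doing so creates a forbidden occurrence in position $n-2$, and I would verify that the exceptional sequences are in bijection with $\bI_{n-1,k}^>(\underline{>,\geq})$ via deletion of the penultimate entry (which, crucially, satisfies $e_{n-1}>e_n$ in the image, explaining the superscript). The superscripted quantity $\bI_{n,k}^>$ is introduced precisely to make this correction term local and computable.

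The main obstacle I anticipate is the correct identification of the three correction terms in~\eqref{eq:A200403_2}, and in particular verifying that the sequences added back in (counted by $\bI_{n-1,k+1}(\underline{>,\geq})$, with \emph{no} superscript) and those removed (counted by $\bI_{n-1,k+1}^{>}(\underline{>,\geq})$, \emph{with} superscript) are disjoint from, and correctly overlap with, the base term. This requires keeping scrupulous track of whether the constraint $e_{n-1}>e_n$ survives the entry-shift and whether a new occurrence of $(\underline{>,\geq})$ is born at position $n-2$; the interplay between the strict relation $>$ and the weak relation $\ge$ at the two ends of the shifted entry is exactly where off-by-one errors are most likely. I would resolve this by partitioning each set according to the value of $e_{n-1}$ relative to $k$ and $k+1$, and matching the pieces term by term, checking the boundary cases $k=n-2$ and small $n$ against the stated initial conditions.
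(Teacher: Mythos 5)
Your plan is sound and is essentially the paper's argument in telescoped form. The paper first establishes the un-telescoped identities $\left|\bI_{n,k}\left(\underline{>,\geq}\right)\right| = \left|\bI_{n-1}\left(\underline{>,\geq}\right)\right| - \sum_{j=k}^{n-2}\left|\bI_{n-1,j}^{>}\left(\underline{>,\geq}\right)\right|$ and $\left|\bI_{n,k}^{>}\left(\underline{>,\geq}\right)\right| = \sum_{j=k+1}^{n-2}\left(\left|\bI_{n-1,j}\left(\underline{>,\geq}\right)\right| - \left|\bI_{n-1,j}^{>}\left(\underline{>,\geq}\right)\right|\right)$, each obtained by appending a last entry $k$ to a length-$(n-1)$ avoider and observing that the only possible new occurrence is at position $n-2$; subtracting the identity for $k+1$ from the one for $k$ then yields \eqref{eq:A200403_1} and \eqref{eq:A200403_2} with no boundary case analysis at all. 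Your scheme of shifting $e_n$ between $k$ and $k+1$ realizes the same differences bijectively, but it forces you to confront exactly the boundary cases ($e_{n-1}=k$ for the first recurrence, $e_{n-1}=k+1$ for the second) that the summation route sidesteps; you do correctly identify that partitioning by the value of $e_{n-1}$ is what resolves them.

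Two concrete points need fixing before this is a proof. First, the exceptional sequences for \eqref{eq:A200403_1}, namely those $e\in\bI_{n,k+1}\left(\underline{>,\geq}\right)$ with $e_{n-2}>e_{n-1}=k$, biject with $\bI_{n-1,k}^{>}\left(\underline{>,\geq}\right)$ by deleting the \emph{last} entry $e_n=k+1$, not the penultimate one; deleting $e_{n-1}$ produces a sequence ending in $k+1$, which lies in the wrong set. Second, your description of the terms in \eqref{eq:A200403_2} does not yet add up: the correct accounting is that the $e\in\bI_{n,k}^{>}\left(\underline{>,\geq}\right)$ with $e_{n-1}>k+1$ biject with $\bI_{n,k+1}^{>}\left(\underline{>,\geq}\right)$ by raising $e_n$ to $k+1$, while those with $e_{n-1}=k+1$ are exactly the sequences obtained by appending $k$ to a prefix in $\bI_{n-1,k+1}\left(\underline{>,\geq}\right)\setminus\bI_{n-1,k+1}^{>}\left(\underline{>,\geq}\right)$ (the superscripted prefixes must be excluded because $e_{n-2}>e_{n-1}\geq e_n$ would then be an occurrence), which is what produces the net contribution $+\left|\bI_{n-1,k+1}\left(\underline{>,\geq}\right)\right|-\left|\bI_{n-1,k+1}^{>}\left(\underline{>,\geq}\right)\right|$. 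Your phrase about removing sequences ``that cease to avoid the pattern or that violate $e_{n-1}>e_n$ after the shift'' conflates these two disjoint pieces. Finally, a cosmetic point: in the case $k=n-1$ an occurrence at position $n-2$ requires $e_{n-1}\geq e_n$, not $e_{n-1}>e_n$; the conclusion is unaffected since $e_{n-1}\leq n-2<n-1=e_n$.
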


Equations~\eqref{eq:A200403_1} and~\eqref{eq:A200403_2} allow us to compute the values $\left|\bI_{n,k}\left(\underline{>,\geq}\right)\right|$ and $\left|\bI_{n,k}^{>}\left(\underline{>,\geq}\right)\right|$ recursively, since an entry indexed by $(n,k)$ depends only on entries indexed by $(n,j)$ where $k<j<n$, or by $(n-1,j)$ for some~$j$.

\begin{proof}
Note that $e\in\bI_{n,k}\left(\underline{>,\geq}\right)$ if and only if $e_n=k$ and
\[{e_{1}e_{2}\ldots e_{n-1}}\in\bI_{n-1}\left(\underline{>,\geq}\right)\backslash\bigsqcup_{j=k}^{n-2}\bI_{n-1,j}^{>}\left(\underline{>,\geq}\right),
\]
where the subtracted term guarantees that $e_{n-2}e_{n-1}e_n$ is not an occurrence of $\left(\underline{>,\geq}\right)$.
Thus,
\begin{equation}\label{eq:A200403_3}
\left|\bI_{n,k}\left(\underline{>,\geq}\right)\right|=\left|\bI_{n-1}\left(\underline{>,\geq}\right)\right|-\sum_{j=k}^{n-2}\left|\bI_{n-1,j}^{>}\left(\underline{>,\geq}\right)\right|.
\end{equation}
In particular, this implies that $\left|\bI_{n,n-1}\left(\underline{>,\geq}\right)\right|=\left|\bI_{n-1}\left(\underline{>,\geq}\right)\right|$. If $0\leq k\leq n-2$, then subtracting Equation~\eqref{eq:A200403_3} from the same equation with $k+1$ substituted for $k$, we may write
$$ \left|\bI_{n,k+1}\left(\underline{>,\geq}\right)\right| - \left|\bI_{n,k}\left(\underline{>,\geq}\right)\right| =
\left|\bI_{n-1,k}^{>}\left(\underline{>,\geq}\right)\right|.$$
This proves the recurrence~\eqref{eq:A200403_1}.

Now, $e\in\bI_{n,k}^{>}\left(\underline{>,\geq}\right)$ if and only if $e_{n}=k$ and
${e_{1}e_{2}\ldots e_{n-1}}\in\bigsqcup_{j=k+1}^{n-2}\bI_{n-1,j}\left(\underline{>,\geq}\right)\backslash\bI_{n-1,j}^{>}\left(\underline{>,\geq}\right)$. Hence,
\begin{equation}\label{eq:A200403_4}
\left|\bI_{n,k}^{>}\left(\underline{>,\geq}\right)\right| = \sum_{j=k+1}^{n-2} \left(\left|\bI_{n-1,j}\left(\underline{>,\geq}\right)\right| - \left|\bI_{n-1,j}^{>}\left(\underline{>,\geq}\right)\right|\right).
\end{equation}
In particular, $\left|\bI_{n,n-1}^{>}\left(\underline{>,\geq}\right)\right|=0$. If $0\leq k\leq n-2$, then subtracting Equation~\eqref{eq:A200403_4} from the same equation with $k+1$ substituted for $k$, we may write
$$
\left|\bI_{n,k+1}^{>}\left(\underline{>,\geq}\right)\right| - \left|\bI_{n,k}^{>}\left(\underline{>,\geq}\right)\right| = - \left|\bI_{n-1,k+1}\left(\underline{>,\geq}\right)\right| + \left|\bI_{n-1,k+1}^{>}\left(\underline{>,\geq}\right)\right|.
$$
This proves Equation~\eqref{eq:A200403_2}.
\end{proof}

More generally, for any $R_{1},R_{2}\in\left\{\leq,\geq,<,>,=,\neq\right\}$, we define the refinement
\begin{equation}\label{eq:refinement}
\bI_{n,k}^{R_{1}}\left(\underline{R_{1},R_{2}}\right)=\left\{e\in\bI_{n,k}\left(\underline{R_{1},R_{2}}\right):e_{n-1}R_1e_{n}\right\}.
\end{equation}
Then, arguing as in the proof of Proposition~\ref{prop:A200403_2}, we can write
\[
\left|\bI_{n,k}\left(\underline{R_{1},R_{2}}\right)\right| = \left|\bI_{n-1}\left(\underline{R_{1},R_{2}}\right)\right| - \sum_{\substack{j=0 \\ jR_{2}k}}^{n-2}\bI_{n-1,j}^{R_{1}}\left(\underline{R_{1},R_{2}}\right)
\]
and
\[
\left|\bI_{n,k}^{R_{1}}\left(\underline{R_{1},R_{2}}\right)\right| = \sum_{\substack{j=0 \\ jR_{1}k}}^{n-2}\left|\bI_{n-1,j}\left(\underline{R_{1},R_{2}}\right)\right| - \sum_{\substack{j=0 \\ jR_{1}k,\, jR_{2}k}}^{n-2}\bI_{n-1,j}^{R_{1}}\left(\underline{R_{1},R_{2}}\right).
\]
These two equations can be simplified, depending on $R_{1}$ and $R_{2}$, to obtain recurrences analogous to Equations~\eqref{eq:A200403_1} and~\eqref{eq:A200403_2}, which allow us to compute the values $\left|\bI_{n,k}\left(\underline{R_{1},R_{2}}\right)\right|$ and $\left|\bI_{n,k}^{R_{1}}\left(\underline{R_{1},R_{2}}\right)\right|$ recursively. In particular, we can obtain such recurrences for each of the 12 Wilf equivalence classes of consecutive patterns of relations mentioned in the beginning of Section~\ref{sec:enumerative_results}, for which $\left|\bI_{n}\left(\underline{R_{1},R_{2}}\right)\right|$ does not match any sequence in~\cite{OEIS}.

We remark that, even though $\left|\bI_{n,k}\left(\underline{>,\geq}\right)\right|=\left|\bI_{n,k}\left(\underline{\geq,>}\right)\right|$ by
Proposition~\ref{prop:A200403_biject}, the resulting recurrences involving the refinements in~\eqref{eq:refinement} are different. In particular, while the quantities $\left|\bI_{n,k}^{>}\left(\underline{>,\geq}\right)\right|$ and $\left|\bI_{n,k}^{\geq}\left(\underline{\geq,>}\right)\right|$ are not equal in general, one can show that
$
\left|\bI_{n,k}^{>}\left(\underline{>,\geq}\right)\right|=\left|\bI_{n,k+1}^{\geq}\left(\underline{\geq,>}\right)\right|
$
for all $n$ and $k$.

\subsection{The pattern $\left(\underline{=,=}\right)$}\label{subsec:A052169}

An occurrence of $\left(\underline{=,=}\right)$ in an inversion sequence is precisely an occurrence of the consecutive pattern $\underline{000}$, so $\bI_{n}\left(\underline{=,=}\right)=\bI_{n}\left(\underline{000}\right)$. The next result rephrases
\cite[Proposition~3.1 and Corollary~3.3]{AuliElizalde} about inversion sequences that avoid $\underline{000}$. We use $d_n$ to denote  the number of derangements of length $n$, i.e., the number of permutations in $S_n$ with no fixed points.

\begin{prop}[\cite{AuliElizalde}]\label{prop:A052169}
The sequence $\left|\bI_{n}\left(\underline{=,=}\right)\right|$ satisfies the recurrence
\[
\left|\bI_{n}\left(\underline{=,=}\right)\right|=(n-1)\left|\bI_{n-1}\left(\underline{=,=}\right)\right|+(n-2)\left|\bI_{n-2}\left(\underline{=,=}\right)\right|
\]
for $n\geq 3$, with initial terms $\left|\bI_{1}\left(\underline{=,=}\right)\right|=1$ and $\left|\bI_{2}\left(\underline{=,=}\right)\right|=2$. It follows that
$$
\left|\bI_{n}\left(\underline{=,=}\right)\right|=\frac{(n+1)!-d_{n+1}}{n}.
$$
\end{prop}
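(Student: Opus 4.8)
The plan is to establish the recurrence first, since the closed formula will follow from it by a routine generating-function or induction argument. The key observation is that avoiding $\left(\underline{=,=}\right)$ means avoiding the consecutive pattern $\underline{000}$, i.e.\ forbidding three equal entries in consecutive positions; equivalently, every maximal run of equal values in $e$ has length at most $2$. To derive the recurrence I would classify $e\in\bI_{n}\left(\underline{=,=}\right)$ according to the relationship between its last two entries $e_{n-1}$ and $e_{n}$. If $e_{n-1}\neq e_{n}$, then deleting $e_n$ yields an arbitrary sequence in $\bI_{n-1}\left(\underline{=,=}\right)$, and $e_n$ may be any value in $\{0,1,\dots,n-1\}\setminus\{e_{n-1}\}$, giving $(n-1)$ choices; this contributes $(n-1)\left|\bI_{n-1}\left(\underline{=,=}\right)\right|$. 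If $e_{n-1}=e_{n}$, then avoidance forces $e_{n-2}\neq e_{n-1}$, so deleting the final repeated pair $e_{n-1}e_n$ produces a sequence in $\bI_{n-2}\left(\underline{=,=}\right)$ whose last entry is $e_{n-2}$; to rebuild $e$ we must choose the common value $e_{n-1}=e_n\in\{0,1,\dots,n-2\}\setminus\{e_{n-2}\}$, giving $(n-2)$ choices and contributing $(n-2)\left|\bI_{n-2}\left(\underline{=,=}\right)\right|$. Summing the two cases yields the stated recurrence.

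The main subtlety I would watch for is the boundary of the value range when appending the repeated entry. The new entry sits in position $n-1$ in the $e_{n-1}=e_n$ case, so a priori its common value ranges over $\{0,\dots,n-2\}$, and removing the forbidden value $e_{n-2}$ leaves exactly $n-2$ admissible values (rather than $n-1$); one must verify that $e_{n-2}$ indeed lies in this range, which holds because $e_{n-2}\le n-3<n-2$. This off-by-one bookkeeping is where an error would most easily creep in, and it is worth stating explicitly that both positions $n-1$ and $n$ can legitimately hold the chosen common value since it is at most $n-2<n-1\le n$. One should also confirm that no new occurrence of $\underline{000}$ is created at the junction, which is exactly the content of the constraint $e_{n-2}\neq e_{n-1}$ imposed in the second case.

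With the recurrence in hand, the closed formula $\left|\bI_{n}\left(\underline{=,=}\right)\right|=\bigl((n+1)!-d_{n+1}\bigr)/n$ can be verified by induction. Setting $a_n=\left|\bI_{n}\left(\underline{=,=}\right)\right|$ and $b_n=(n+1)!-d_{n+1}$, I would check that $b_n/n$ satisfies the same two-term recurrence and the same initial conditions $a_1=1$, $a_2=2$. This reduces to a purely arithmetic identity in the derangement numbers, using the standard recurrence $d_{n+1}=n\,d_n+n\,d_{n-1}$ (equivalently $d_{m}=(m-1)(d_{m-1}+d_{m-2})$); substituting $a_{n-1}=b_{n-1}/(n-1)$ and $a_{n-2}=b_{n-2}/(n-2)$ into the right-hand side and simplifying should reproduce $b_n/n$. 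Since the excerpt attributes this result to \cite[Proposition~3.1 and Corollary~3.3]{AuliElizalde}, I would simply invoke those references for the formula, noting that the recurrence derived here matches the one recorded there, so that the two statements in Proposition~\ref{prop:A052169} are consistent.

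The part I expect to be the main obstacle is not the combinatorial bijection—which is clean once the run-length reformulation is made—but rather the purely computational verification that the derangement expression satisfies the recurrence, since that identity involves juggling several factorial and derangement recurrences simultaneously; that said, this is routine and can be relegated to a short induction or simply cited from \cite{AuliElizalde}.
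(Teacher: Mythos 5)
Your argument is correct. Note that the paper itself gives no proof of this proposition: it merely observes that avoiding $\left(\underline{=,=}\right)$ is the same as avoiding the single consecutive pattern $\underline{000}$ and cites \cite[Proposition~3.1 and Corollary~3.3]{AuliElizalde} for both the recurrence and the closed form. What you have written is a self-contained derivation of exactly that cited result: the split according to whether $e_{n-1}=e_n$ is the natural one, your count of $n-1$ choices in the unequal case and $n-2$ choices in the equal case (using $e_{n-1}\le n-2$ and $e_{n-2}\le n-3$ to justify the value ranges) is accurate, and the verification that $b_n=(n+1)!-d_{n+1}$ satisfies $b_n=n\left(b_{n-1}+b_{n-2}\right)$ via $d_{n+1}=n\left(d_n+d_{n-1}\right)$ together with the initial values $a_1=1$, $a_2=2$ is routine and checks out. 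So your proposal fills in the proof the paper delegates to its companion reference, rather than diverging from it.
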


\section{Inversion Sequences Satisfying Unimodality Conditions}\label{sec:unimodal}

In Lemma~\ref{lem:A071356} we enumerated inversion sequences satisfying a unimodality condition, which,
as observed by Martinez and Savage~\cite[Section~2.19]{MartinezSavageII}, are precisely those inversion sequences avoiding the pattern $\left(>,\leq,-\right)$.
This is one of 10 triples of binary relations of the form $\left(R_{1},R_{2},R_{3}\right)$ such that $\bI_{n}\left(R_{1},R_{2},R_{3}\right)$ is characterized by some type of unimodality condition. The enumeration of the inversion sequences in each case  was carried out in~\cite{MartinezSavageII}, with the results summarized in Table~\ref{tab:unimodal}.

\begin{table}[htbp]
\footnotesize
\begin{center}
 \begin{tabular}{ c@{\hskip 0.2in}l@{\hskip 0.2in}p{4cm} }
 \hline
 Pattern $\left(R_{1},R_{2},R_{3}\right)$ & unimodality condition for $e\in\bI_{n}\left(R_{1},R_{2},R_{3}\right)$ & $\left|\bI_{n}\left(R_{1},R_{2},R_{3}\right)\right|$ counted by
 \Tstrut\Bstrut\\
 \hline
 $\left(<,-,<\right)$ & $\exists j:e_{1}=\dots =e_{j}\leq e_{j+1}\geq 0=\dots =0$ & $1+\binom{n}{2}$ \Tstrut\\
 $\left(\neq,<,-\right)$ & $\exists j:e_{1}=\dots =e_{j}\leq e_{j+1}\geq e_{j+2}\geq\dots \geq e_{n}$ & $2^{n}-n$ \Tstrut\\
 $\left(\neq,\leq,-\right)$ & $\exists j:e_{1}=\dots =e_{j}< e_{j+1}> e_{j+2}>\dots >e_{n}$ & $F_{n+2}-1$ \Tstrut\\
 $\left(>,<,-\right)$ & $\exists j:e_{1}\leq\dots\leq e_{j}> e_{j+1}\geq e_{j+2}\geq\dots\geq e_{n}$ \Tstrut & OGF is
 $\frac{1+z-\sqrt{1-6z+5z^2}}{2z(2-z)}$
 \\
 $\left(>,\leq,-\right)$ & $\exists j:e_{1}\leq\dots\leq e_{j}> e_{j+1}> e_{j+2}>\dots >e_{n}$ \Tstrut & OGF is $\frac{1+2z-\sqrt{1-4z-4z^2}}{4z}$ \Tstrut\\
 $\left(>,\neq,-\right)$ & $\exists j:e_{1}\leq\dots\leq e_{j}\geq e_{j+1}= e_{j+2}=\dots =e_{n}$ & $1+\sum_{i=1}^{n-1}\binom{2i}{i-1}$ \Tstrut\\
 $\left(\geq,\neq,-\right)$ & $\exists j:e_{1}<\dots <e_{j}\geq e_{j+1}= e_{j+2}=\dots =e_{n}$ & $1+\binom{n}{2}$ \Tstrut\\
 $\left(=,<,-\right)$ & $\exists j:e_{1}<\dots <e_{j}\geq e_{j+1}\geq e_{j+2}\geq\dots\geq e_{n}$ & $2^{n-1}$ \Tstrut\\
 $\left(=,\leq,-\right)$ & $\exists j:e_{1}<\dots <e_{j}\geq e_{j+1}> e_{j+2}>\dots >e_{n}$ \Tstrut & $F_{n+1}$ \\
 $\left(\geq,\leq,\neq\right)$ & $\exists (j\leq i):e_{1}<\dots <e_{j}=\dots =e_{i}>\dots >e_{n}$ \Tstrut & $F_{n+2}-1$ \Tstrut\Bstrut\\
 \hline
 \end{tabular}
 \end{center}
 \caption{Patterns $\left(R_{1},R_{2},R_{3}\right)$ for which $\bI_{n}\left(R_{1},R_{2},R_{3}\right)$ is characterized by a unimodality condition. These enumerative results are due to Martinez and Savage~\cite{MartinezSavageII}. Patterns are listed in the same order from~\cite{MartinezSavageII}, rather than from our usual order (least avoided to most avoided).}\label{tab:unimodal}
 \vspace{-.7cm}
\end{table}

Martinez and Savage~\cite{MartinezSavageII} obtain these via {\it ad hoc} methods. In this section, we provide a unified approach, generalizing the method that we used in Section~\ref{subsec:A071356} to prove Theorem~\ref{carlaConjecture}, in order to recover their results. Furthermore, we find bivariate generating functions keeping track of the statistic $\dist$ (recording the number of distinct entries), in analogy to Theorem~\ref{thm:A071356}, for each of the patterns in Table~\ref{tab:unimodal}. For a triple of relations $\left(R_{1},R_{2},R_{3}\right)$, define
$$I_{\left(R_{1},R_{2},R_{3}\right)}(z,t)=\sum_{n\geq 0}\sum_{e\in\bI_{n}\left(R_{1},R_{2},R_{3}\right)}z^{n}t^{\dist(e)}.$$

We begin by considering the pattern $\left(>,<,-\right)$, and noting that $\bI_{n}\left(>,\leq,-\right)\subseteq\bI_{n}\left(>,<,-\right)$.
We generalize Definition~\ref{def:marked} by allowing different kinds of marked steps in a Dyck path.

\begin{defin} A {\em multi-marked Dyck path} $P$ is an underdiagonal path from $(0,0)$ to some point in the diagonal, with horizontal steps $E=(1,0)$ and infinitely many possible kinds of vertical steps $(0,1)$, denoted by $N$ and $N^{*}_t$ for $t\geq 2$ (the latter are called {\em marked} steps). Denoting by $E(P)$, $N(P)$ and $N^*_t(P)$ the number of $E$, $N$ and $N^*_t$ steps in $P$, respectively, the size of $P$ is defined as
\[
E(P)+\sum_{t\geq 2}(t-1)N^*_t(P)=N(P)+\sum_{t\geq 2}t\,N^*_t(P).
\]
We denote by $\mathcal{\widetilde{P}}_{n}$ the set of multi-marked Dyck paths of size $n$.

If $P\in\mathcal{\widetilde{P}}_{n}$ has at least one $N^*_t$ step in the last run of vertical steps, then we say that $P$ has a {\em marked tail}. Otherwise, we say that $P$ has an {\em unmarked tail}. We denote by $\mathcal{\widetilde{R}}_{n}$ the set of paths in $\mathcal{\widetilde{P}}_{n}$ with an unmarked tail.
\end{defin}

Multi-marked Dyck paths generalize marked Dyck paths from Definition~\ref{def:marked}, in the sense that any marked Dyck path in $\mathcal{P}_{n}$, after replacing $N^*$ steps with $N^*_2$ steps, can be seen as a multi-marked Dyck path in $\mathcal{\widetilde{P}}_{n}$ containing no steps $N^*_t$ with $t\ge3$. Hence, $\mathcal{P}_{n}\subseteq\mathcal{\widetilde{P}}_{n}$ and $\mathcal{R}_{n}\subseteq\mathcal{\widetilde{R}}_{n}$.

\begin{exa} The multi-marked Dyck path $R=ENEEN^*_4N^*_2ENEN^*_3EN$ has size 12, so $R\in\mathcal{\widetilde{P}}_{12}$. In fact, $R$ has an unmarked tail, so $R\in\mathcal{\widetilde{R}}_{12}$. This path is drawn on the right of Figure~\ref{fig:carla_conj_proof3}(b).
\end{exa}

\begin{figure}[htb]
\begin{centering}
\begin{tabular}{c@{\qquad}c}
\includegraphics[scale=0.6]{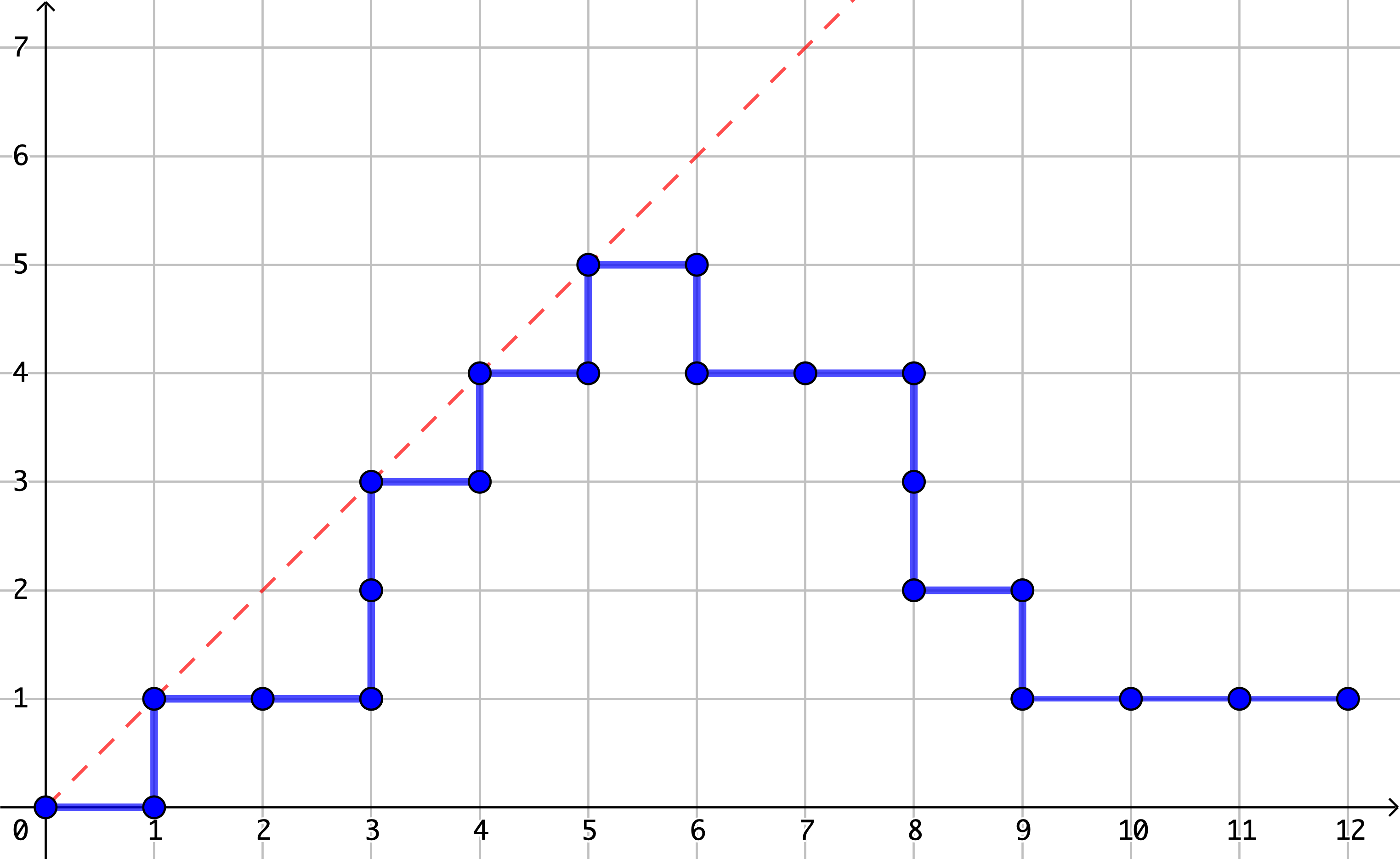}
&
\includegraphics[scale=0.6]{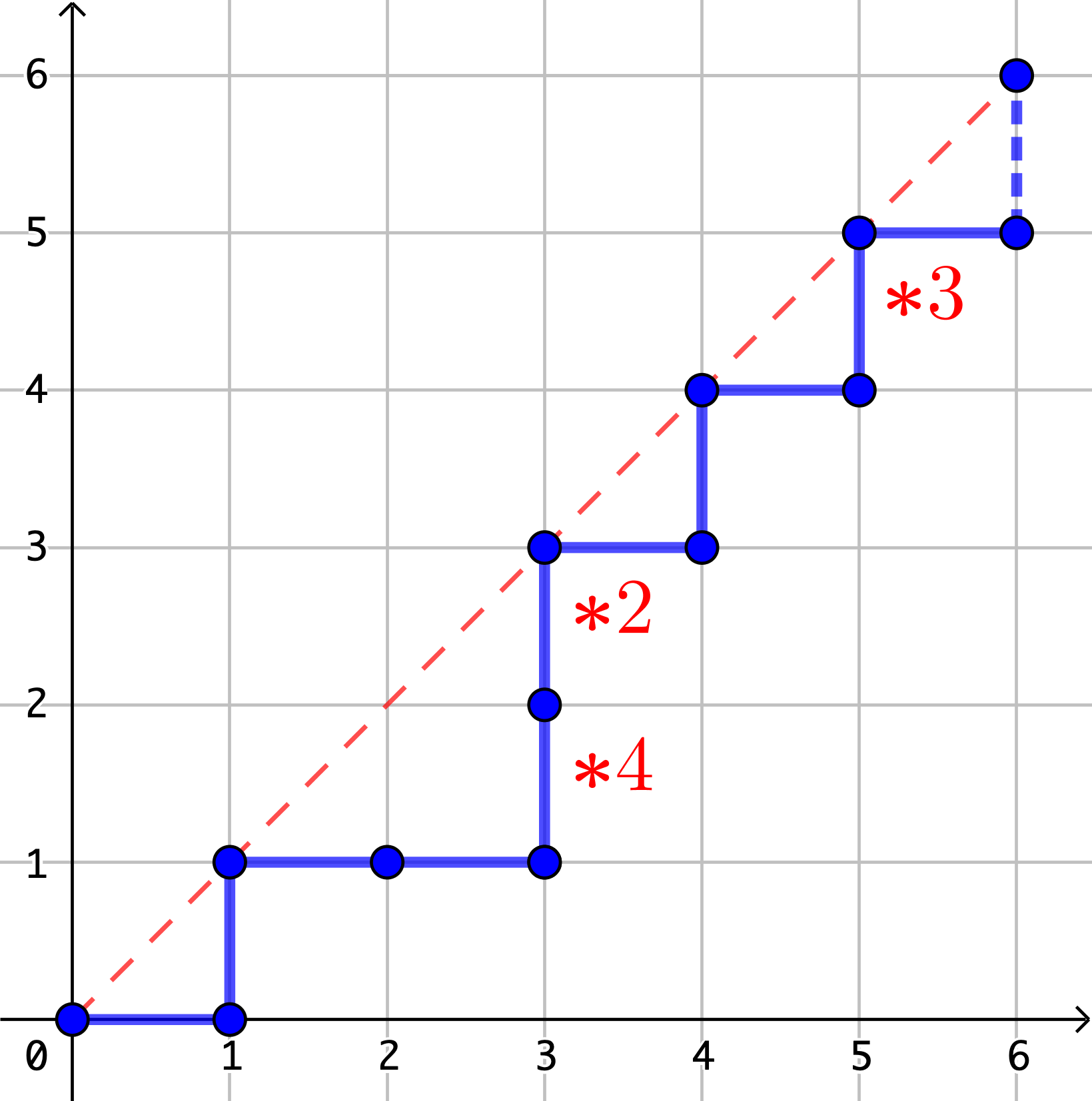}
\\
(a) & (b)
\end{tabular}
\end{centering}
\caption{(a) The inversion sequence $e=011345442111\in\bI_{12}\left(>,<,-\right)$ represented as an the underdiagonal lattice path $P=ENEENNENENESEESSESEEE$. (b) Its corresponding multi-marked Dyck path ${R=ENEEN^*_4N^*_2ENEN^*_3EN}\in\mathcal{\widetilde{R}}_{12}$.}\label{fig:carla_conj_proof3}
\end{figure}

\begin{thm}\label{thm:unimodal_1}
\[
I_{\left(>,<,-\right)}(z,t) = \frac{1+z(2-t)-z^{2}(1-t)-(1-z)\sqrt{(1+z-zt)^{2}-\frac{4z(1-z+zt)}{1-z}}}{2z(2-z)}.
\]
\end{thm}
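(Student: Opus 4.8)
The plan is to mirror the three-step strategy used for Theorem~\ref{thm:A071356} (a bijection to decorated lattice paths, a translation of $\dist$, and a functional-equation computation), but now with the \emph{multi-marked} Dyck paths, which are designed precisely to encode the weakly decreasing tail of the unimodality condition for $\left(>,<,-\right)$. Recall that an inversion sequence in $\bI_{n}\left(>,<,-\right)$ has the form $e_{1}\le\dots\le e_{j}>e_{j+1}\ge\dots\ge e_{n}$. First I would extend the bijection $\varphi$ from the proof of Theorem~\ref{carlaConjecture} to a bijection $\widetilde\varphi\colon\bI_{n}\left(>,<,-\right)\to\widetilde{\mathcal R}_{n}$: represent $e$ as an underdiagonal path and fold its descending portion back onto the ascending portion as before, except that a height $v$ occurring with multiplicity $m$ in the weakly decreasing tail is now recorded by upgrading the ascending step from $v$ to $v+1$ to a marked step $N^{*}_{m+1}$ (rather than a single $N^{*}$). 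Since the tail values satisfy $v\le e_{j+1}<e_{j}$, each such ascending step exists, and the size bookkeeping balances: erasing the $m$ horizontal steps at height $v$ removes $m$ from the size, while the upgrade of $N$ to $N^{*}_{m+1}$ restores it, so $\widetilde\varphi(e)$ has size $n$ and an unmarked tail. The inverse unmarks and unfolds, exactly as for $\varphi$.

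Next I would verify that under $\widetilde\varphi$ the statistic $\dist$ becomes the same path statistic as in Theorem~\ref{thm:A071356}, namely
\[
\dist(R)=\#\{\text{elbows in }R\}+\#\{N^{*}_{s}\text{ steps of }R\text{ not part of an elbow}\},
\]
where an elbow is an $E$ step immediately followed by a vertical step. The point is that each distinct ascending value of $e$ contributes exactly one elbow; a descending value $v$ that also occurs in the ascent yields a marked step preceded by an $E$ (already counted by that elbow), while a descending value not occurring in the ascent yields a marked step preceded by a vertical step, i.e.\ a free marked step counted separately. Hence the distinct values of $e$ correspond bijectively to elbows together with free marked steps, so $\dist(e)=\dist(R)$.

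With these translations in hand, the generating-function computation splits into the same two functional equations as before. The decomposition $R=P_{1}EP_{2}N$ of a path in $\widetilde{\mathcal R}_{n}$ (with $P_{1}\in\widetilde{\mathcal P}$ and $P_{2}\in\widetilde{\mathcal R}$) is unaffected by the extra step types, so writing $\widetilde R(z,t)=I_{\left(>,<,-\right)}(z,t)$ and $\widetilde P(z,t)=\sum_{n\ge0}\sum_{P\in\widetilde{\mathcal P}_{n}}z^{n}t^{\dist(P)}$, the identical bookkeeping (an extra elbow arising exactly when $P_{2}$ is empty) yields
\[
\widetilde R(z,t)=\frac{1-z(1-t)\widetilde P(z,t)}{1-z\widetilde P(z,t)}.
\]
The genuinely new ingredient is the equation for $\widetilde P(z,t)$. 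Decomposing a multi-marked path into irreducible arches $EP'N$ and $EP'N^{*}_{s}$ (for $s\ge2$), and summing the geometric series $\sum_{s\ge2}z^{s}=z^{2}/(1-z)$ over the marking types, I would obtain that the total irreducible contribution equals $z(t-1)+\tfrac{z(1-z+zt)}{1-z}\,\widetilde P(z,t)$, whence $\widetilde P$ satisfies the quadratic
\[
\frac{z(1-z+zt)}{1-z}\,\widetilde P(z,t)^{2}-(1+z-zt)\,\widetilde P(z,t)+1=0.
\]
Selecting the branch with $\widetilde P(0,t)=1$ gives $\widetilde P$ in terms of the radical $\sqrt{(1+z-zt)^{2}-\tfrac{4z(1-z+zt)}{1-z}}$ appearing in the statement.

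The final step is to substitute this expression for $\widetilde P(z,t)$ into the formula for $\widetilde R(z,t)$ and simplify. I expect the main obstacle to be exactly this simplification: rationalizing $\frac{1-z(1-t)\widetilde P}{1-z\widetilde P}$ and coaxing the nested radicals into the single clean radical and the denominator $2z(2-z)$ of the stated formula. A reliable way to control the algebra is to clear the radical by multiplying numerator and denominator by the conjugate, using that the radicand times $(1-z)$ factors nicely. As a sanity check, setting $t=1$ collapses the answer to $\frac{1+z-\sqrt{1-6z+5z^{2}}}{2z(2-z)}$ (via $1-6z+5z^{2}=(1-z)(1-5z)$), which recovers the univariate entry for $\left(>,<,-\right)$ in Table~\ref{tab:unimodal} and confirms both the branch choice and the final simplification.
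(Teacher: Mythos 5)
Your proposal is correct and follows essentially the same route as the paper's proof: the same generalization of $\varphi$ sending a maximal run of $m$ equal tail entries at height $v$ to an $N^*_{m+1}$ step, the same translation of $\dist$ into elbows plus free marked steps, the identical functional equation for $\widetilde R$ in terms of $\widetilde P$, and the same quadratic for $\widetilde P$ obtained from the irreducible-arch decomposition with the geometric series $\sum_{s\ge2}z^s=z^2/(1-z)$. Your $t=1$ sanity check against the univariate formula for $\left(>,<,-\right)$ is a nice confirmation of the branch choice, and your write-up of the quadratic is in fact cleaner than the paper's intermediate display.
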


\begin{proof}
We begin by describing a bijection $\varphi:\bI_{n}\left(>,<,-\right)\rightarrow\mathcal{\widetilde{R}}_{n}$, generalizing the bijection in the proof of Theorem~\ref{carlaConjecture}, which was between $\bI_{n}\left(>,\leq,-\right)$ and $\mathcal{R}_{n}$.

Let $e\in\bI_{n}\left(>,<,-\right)$, and let $j$ be such that $e_{1}\leq e_{2}\leq\cdots\leq e_{j}>e_{j+1}\geq e_{j+2}\geq\cdots\geq e_{n}$.
Let $P$ be the corresponding underdiagonal lattice path from $(0,0)$ to the line $x=n$, with $E$ steps at heights $e_{1},\ldots,e_{n}$, and the necessary $N$ and $S$ steps in between.
We construct $\varphi(e)\in\mathcal{\widetilde{R}}_{n}$ as follows; see Figure~\ref{fig:carla_conj_proof3} for an example.

\begin{enumerate}[label=\arabic*)]
\item For every maximal run of consecutive $E$ steps in the descending portion of $P$ (to the right of $x=j$), which corresponds to entries
$e_{i}=e_{i+1}=\dots =e_{i+t-2}$ with $i>j$ and $t\geq 2$, turn the $N$ step in the ascending portion of $P$ going from height $e_i$ to height $e_i+1$ into an $N^*_t$ step.
\item Erase the descending portion of $P$, and instead append $j-e_j$ $N$ steps . Let $\varphi(e)\in \mathcal{\widetilde{R}}_n$ be the resulting path from the origin to $(j,j)$.
\end{enumerate}

It is clear that $\varphi$ is a bijection, so $|\bI_{n}\left(>,<,-\right)|=|\mathcal{\widetilde{R}}_{n}|$. In the rest of the proof we
count multi-marked Dyck paths with an unmarked tail, mimicking the proof of Theorem~\ref{thm:A071356}.

Define an elbow of a path in $\mathcal{\widetilde{P}}_{n}$ to be a consecutive occurrence of $EN$ or $EN^*_t$ for some $t\geq 2$. If $e\in\bI_{n}\left(>,<,-\right)$ and $R=\varphi(e)$, then $\dist(e)$ equals the following statistic on $R$, which we also denote by $\dist$:
\[
\dist(R) = \#\left\{\text{elbows in }R\right\} + \sum_{t\geq 2}\#\left\{N^*_t \text{ steps in }R\text{ that are not part of an elbow}\right\}.
\]

Now we find expressions for the generating functions
\[
\widetilde{R}(z,t)=\sum_{n\geq 0}\sum_{R\in\mathcal{\widetilde{R}}_{n}}z^{n}t^{\dist(R)}=I_{\left(>,<,-\right)}(z,t)
\quad\text{and}\quad
\widetilde{P}(z,t)=\sum_{n\geq 0}\sum_{P\in\mathcal{\widetilde{P}}_{n}}z^{n}t^{\dist(P)}.
\]
An identical argument to the one we used to deduce Equation~\eqref{eq:A071356_3} yields
\begin{equation}\label{eq:unimodal_eq_R}
\widetilde{R}(z,t) = \frac{1-z(1-t)\widetilde{P}(z,t)}{1-z\widetilde{P}(z,t)},
\end{equation}
so it is enough to compute $\widetilde{P}(z,t)$.

If $P$ is an irreducible multi-marked Dyck path, then there is a unique multi-marked Dyck path $P'$ such that either $P=EP'N$ or $P=EP'N^*_t$ for some $t\geq 2$. Paths $P$ for which $P'$ is empty contribute $zt+\frac{z^{2}t}{1-z}$ to the generating function $\widetilde{P}(z,t)$, because in this case $P$ is an elbow. On the other hand, paths $P$ for which $P'$ is nonempty contribute $\left(z+\frac{z^{2}t}{1-z}\right)\left(\widetilde{P}(z,t)-1\right)$ to $\widetilde{P}(z,t)$. Given that every path in $\mathcal{\widetilde{P}}_{n}$ has a unique decomposition as a sequence of irreducible multi-marked Dyck paths, it follows that
\[
\widetilde{P}(z,t) = \frac{1}{1-\left[z(t-1)+\left(z+\frac{z^{2}t}{1-z}\right)\widetilde{P}(z,t)\right]}.
\]
Solving for $\widetilde{P}(z,t)$ yields
\[
\widetilde{P}(z,t)=\frac{(1-z)\left(1+z-zt- \sqrt{(1+z-zt)^{2}-4\left(z\frac{z^{2}t}{1-z}\right)}\right)}{2z(1-z+zt)}.
\]
Hence, Equation~\eqref{eq:unimodal_eq_R} implies that
\[
\widetilde{R}(z,t) = \frac{1+z(2-t)-z^{2}(1-t)+(z-1)\sqrt{(1+z-zt)^{2}+\frac{4z(1-z+zt)}{z-1}}}{2z(2-z)}.\qedhere
\]
\end{proof}

By setting $t=1$ in Theorem~\ref{thm:unimodal_1}, we find that
\begin{equation}\label{eq:unimodal_0}
\sum_{n\geq 0}\left|\bI_{n}\left(>,<,-\right)\right|z^{n}=\frac{1+z-\sqrt{1-6z+5z^2}}{2z(2-z)},
\end{equation}
which appears as A033321 in~\cite{OEIS}. This is also the generating function for $\left|S_{n}\left(2143,3142,4132\right)\right|$, that is, permutations simultaneously avoiding the classical patterns $2143$, $3142$ and $4132$,
see~\cite[Lemma~3.2]{BloomBurstein}. Burstein and Stromquist~\cite{BursteinStromquist} point out that $\pi\rightarrow\Theta\left(\pi^{RC}\right)$ is a bijection between $S_{n}\left(2143,3142,4132\right)$ and $\bI_{n}\left(>,<,-\right)$.

\begin{figure}[htp]
\begin{centering}
\includegraphics[scale=0.55]{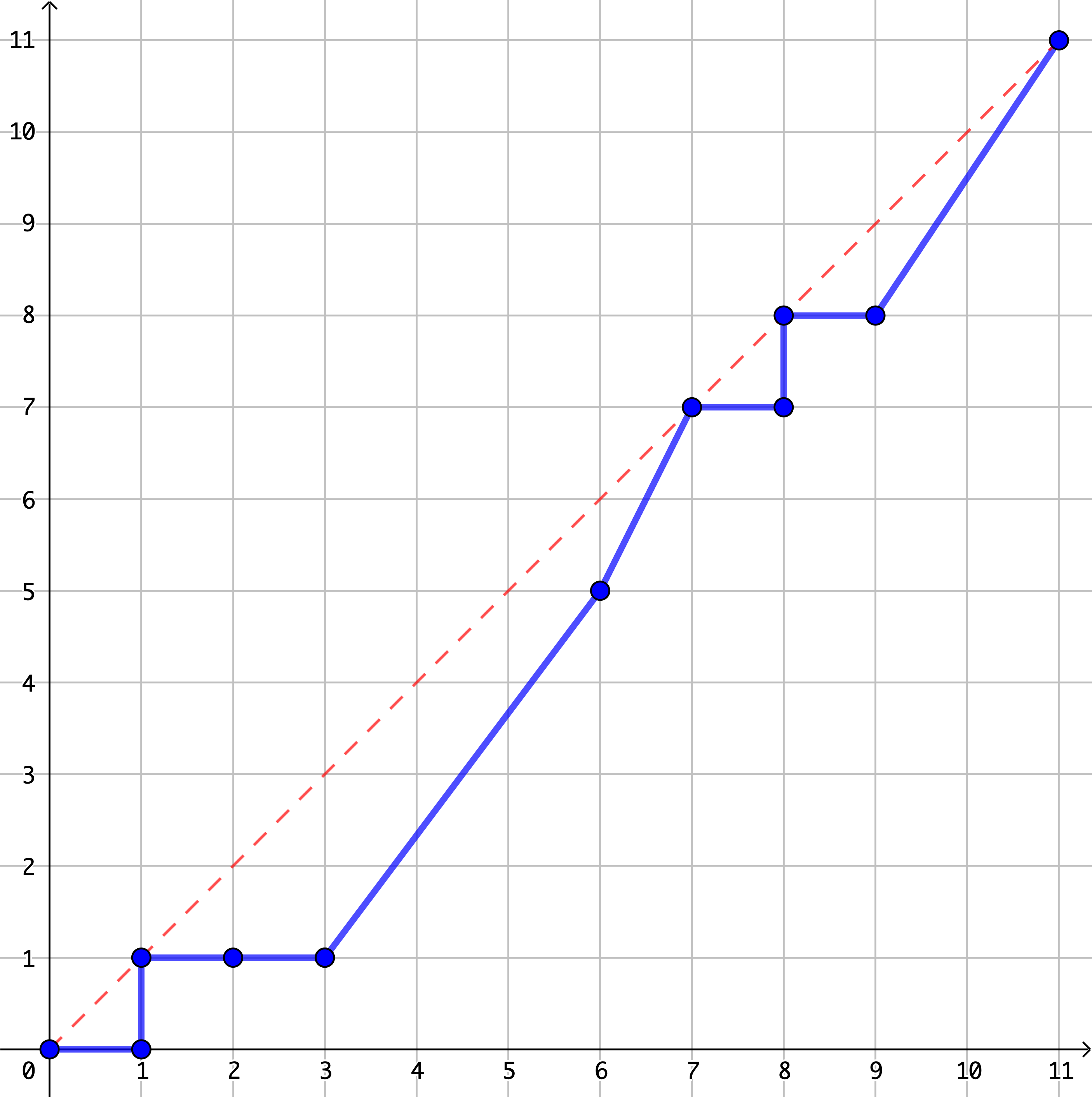}
\end{centering}
\caption{The path $R'=ENEED_{4}D_{2}ENED_{3}\in\mathcal{\widetilde{R}}'_{11}$ corresponding to $e=011345442111$
(from Figure~\ref{fig:carla_conj_proof3}) under the bijection $\bI_{12}(>,<,-)\to\mathcal{\widetilde{R}}'_{11}$.}\label{fig:gen_deutsch_path}
\end{figure}

In Section~\ref{subsec:A071356} we provided a bijection between $\bI_{n}\left(>,\leq,-\right)$ and $\mathcal{R}'_{n-1}$. To extend this construction to inversion sequences avoiding $\left(>,<,-\right)$, let $\mathcal{\widetilde{R}}'_{n}$ be the set of underdiagonal paths from $(0,0)$ to the line $x=n$ with steps $N$, $E$ and $D_{t}=(t-1,t)$ for $t\geq 2$, and note that $\mathcal{R}'_{n}\subseteq\mathcal{\widetilde{R}}'_{n}$.  We construct a bijection between $\bI_{n}\left(>,<,-\right)$ and $\mathcal{\widetilde{R}}'_{n-1}$ as follows. Given $e\in\bI_{n}\left(>,<,-\right)$, first let $R\in\mathcal{\widetilde{R}}_{n}$ be its corresponding multi-marked Dyck path with an unmarked tail. Then replace each marked step $N^{*}_{t}$ in $R$ by a step $D_{t}$, and delete the last run of $N$ steps as well as the $E$ step immediately preceding it, to obtain a path $R'\in\mathcal{\widetilde{R}}'_{n-1}$ (see Figure~\ref{fig:gen_deutsch_path} for an example). It follows from this bijection and Equation~\eqref{eq:unimodal_0} that
\begin{equation*}
\sum_{n\geq 0}\left|\mathcal{\widetilde{R}}'_{n}\right|z^{n}=\frac{1-3z+2z^{2}-\sqrt{1-6z+5z^2}}{2z^{2}(2-z)}.
\end{equation*}

Let $\left(R_{1},R_{2},R_{3}\right)$ be one of the patterns listed in Table~\ref{tab:unimodal}. Then $\bI_{n}\left(R_{1},R_{2},R_{3}\right)\subseteq\bI_{n}\left(>,<,-\right)$. Indeed, Table~\ref{tab:unimodal} shows that the unimodality condition characterizing $\bI_{n}\left(R_{1},R_{2},R_{3}\right)$ is weaker than that characterizing $\bI_{n}\left(>,<,-\right)$. Hence, an inversion sequence $e\in \bI_{n}\left(R_{1},R_{2},R_{3}\right)$ can still be represented as a multi-marked Dyck path $\varphi(e)$ with an unmarked tail, where $\varphi$ is the bijection in the proof of Theorem~\ref{thm:unimodal_1}. For each pattern in Table~\ref{tab:unimodal}, $\varphi$ induces a bijection between $\bI_{n}\left(R_{1},R_{2},R_{3}\right)$ and a subset of $\mathcal{\widetilde{R}}_{n}$ that can be characterized, allowing us to find the generating function $I_{\left(R_{1},R_{2},R_{3}\right)}(z,t)$
using the symbolic method, as in the proof of Theorem~\ref{thm:unimodal_1}.

\begin{thm}\label{thm:unimodal_2}
For patterns $\left(R_{1},R_{2},R_{3}\right)$ such that $\bI_{n}\left(R_{1},R_{2},R_{3}\right)$ is characterized by unimodality conditions, the generating functions $I_{\left(R_{1},R_{2},R_{3}\right)}(z,t)$ are as follows:

\begin{equation*}
I_{\left(<,-,<\right)}(z,t) = \frac{1-3z+zt+3z^{2}-2z^{2}t+z^{2}t^{2}-z^{3}+z^{3}t}{(1-z)^{3}},
\end{equation*}
\begin{equation*}
I_{\left(\neq,<,-\right)}(z,t) = \frac{1-4z+zt+6z^{2}-4z^{2}t+z^{2}t^{2}-4z^{3}+5z^{3}t-z^{3}t^{2}+z^{4}-2z^{4}t+z^{4}t^{2}}{(1-z)^{2}(1-2z+z^{2}-z^{2}t)},
\end{equation*}
\begin{equation*}
I_{\left(\neq,\leq,-\right)}(z,t) = \frac{1-2z+zt+z^{2}-2z^{2}t+z^{2}t^{2}+z^{3}t}{(1-z)(1-z-z^{2}t)},
\end{equation*}
\begin{equation}\label{eq:unimodal_4}
I_{\left(>,<,-\right)}(z,t) = \frac{1+z(2-t)-z^{2}(1-t)-(1-z)\sqrt{(1+z-zt)^{2}-\frac{4z(1-z+zt)}{1-z}}}{2z(2-z)},
\end{equation}
\begin{equation}\label{eq:unimodal_5}
I_{\left(>,\leq,-\right)}(z,t) = \frac{1+z(3-t)-\sqrt{1-z\left(2+2t-z+6zt-zt^{2}\right)}}{4z},
\end{equation}
\begin{equation*}
I_{\left(>,\neq,-\right)}(z,t) = \frac{1-2z+z^{2}(1-t)^{2}+(1-z+zt)\sqrt{(1+z-zt)^{2}-4z}}{2(1-z)\sqrt{(1+z-zt)^{2}-4z}},
\end{equation*}
\begin{align*}
&I_{\left(\geq,\neq,-\right)}(z,t) = \frac{1-z-zt+2z^{2}t}{(1-z)(1-zt)^{2}}, \quad &&
I_{\left(=,<,-\right)}(z,t) = \frac{1-z}{1-z-zt},\\
&I_{\left(=,\leq,-\right)}(z,t) = \frac{1}{1-zt-z^{2}t}, \quad &&
I_{\left(\geq,\leq,\neq\right)}(z,t) = \frac{1-z+z^{3}t}{(1-z)(1-zt-z^{2}t)}.
\end{align*}
\end{thm}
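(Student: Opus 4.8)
The plan is to reuse, for each of the ten triples $(R_1,R_2,R_3)$ in Table~\ref{tab:unimodal}, the bijection $\varphi$ from the proof of Theorem~\ref{thm:unimodal_1}. Since every unimodality condition in the table is weaker than the one defining $\bI_n(>,<,-)$, we have $\bI_n(R_1,R_2,R_3)\subseteq\bI_n(>,<,-)$, so $\varphi$ restricts to a bijection between $\bI_n(R_1,R_2,R_3)$ and a subclass of $\widetilde{\mathcal{R}}_n$. I would proceed in three steps: first translate each unimodality condition into a structural restriction on the multi-marked Dyck path $\varphi(e)$; then recompute the generating function of the resulting subclass by the symbolic method, tracking $\dist$ via the statistic $\#\{\text{elbows}\}+\sum_{s\ge2}\#\{N^*_s\text{ steps not in an elbow}\}$ exactly as in Theorem~\ref{thm:unimodal_1}; and finally read off $I_{(R_1,R_2,R_3)}(z,t)=\widetilde{R}(z,t)$ for the subclass, verifying that $t=1$ recovers the univariate count in the table. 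Note that two of the ten formulas, namely~\eqref{eq:unimodal_4} and~\eqref{eq:unimodal_5}, are merely restatements of Theorems~\ref{thm:unimodal_1} and~\ref{thm:A071356}, so only the remaining eight cases require work.

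First I would set up the dictionary between the shape of $e$ and the shape of $\varphi(e)$. Recall that $\varphi(e)$ carries its $E$ steps at the heights $e_1,\dots,e_j$ of the weakly increasing portion of $e$, while each maximal run of $\ell$ equal entries in the weakly decreasing portion becomes a single step $N^*_{\ell+1}$. Hence the ascent of $e$ controls the unmarked skeleton: a weakly increasing ascent imposes no restriction (the $E$-heights of a Dyck path are automatically weakly increasing); a strictly increasing ascent forces every $E$ step to be followed immediately by a vertical step, so the skeleton consists of elbows only; and a constant ascent forces all $E$ steps but the one at the peak to lie at height $0$. Dually, the descent controls the marked steps: a weakly decreasing descent allows all $N^*_s$ with $s\ge2$; a strictly decreasing descent allows only $N^*_2$ (so the relevant paths are the ordinary marked Dyck paths of Definition~\ref{def:marked}); and a constant descent produces exactly one marked step. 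The plateau pattern $(\geq,\leq,\neq)$ combines a strictly increasing ascent, a run of equal $E$ steps at the peak height, and a strictly decreasing descent, while the degenerate pattern $(<,-,<)$ corresponds to a flat base carrying a single peak that returns immediately to height~$0$. As a sanity check, $(>,<,-)$ imposes no restriction and recovers all of $\widetilde{\mathcal{R}}_n$, while $(>,\leq,-)$ recovers the ordinary marked Dyck paths $\mathcal{R}_n$.

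With the dictionary fixed, for each case I would mirror the derivation of Equations~\eqref{eq:A071356_3} and~\eqref{eq:unimodal_eq_R}: introduce the generating function $\widetilde{P}$ for the appropriate irreducible building blocks and $\widetilde{R}$ for the paths with an unmarked tail, use the decomposition of an irreducible path as $EP'N$ or $EP'N^*_s$ together with the tail decomposition $R=P_1EP_2N$, and solve the resulting equation. The restrictions from the dictionary only change which monomials elbows and marked steps contribute; for example, restricting to $N^*_2$ replaces the factor $\frac{z^2t}{1-z}$ by $z^2t$, exactly as in the passage from Theorem~\ref{thm:unimodal_1} to Theorem~\ref{thm:A071356}, while an elbows-only skeleton tends to collapse the Catalan-type quadratic into a linear equation, which is why the strictly increasing patterns $(=,<,-)$, $(=,\leq,-)$ and $(\geq,\neq,-)$ yield rational generating functions. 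Solving for $\widetilde{R}(z,t)$ and selecting the branch without negative powers in its series expansion then yields each displayed formula.

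The main obstacle I anticipate is the bookkeeping at the places where the clean ``sequence of irreducible blocks'' structure degenerates, most of which occur at the peak and the tail. The constant-descent patterns $(>,\neq,-)$ and $(\geq,\neq,-)$ force $\varphi(e)$ to carry a single marked step, so the class does not factor as an unrestricted product and must instead be decomposed by isolating that unique $N^*_s$; the plateau pattern $(\geq,\leq,\neq)$ requires a separate treatment of the flat top; and the degenerate $(<,-,<)$ must essentially be handled by hand. A further subtlety is that the $\varphi$-peak $j$ (defined by the strict descent $e_j>e_{j+1}$) need not coincide with the transition index recorded in Table~\ref{tab:unimodal}, so a strictly increasing ascent may acquire a single repeated $E$ step at its top; one must encode this boundary interaction correctly and check that the $\dist$ statistic is transported faithfully, so that elbows and lonely marked steps continue to count distinct entries once flat, single-mark, or plateau constraints are imposed. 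Once these special decompositions are in place, the remaining work is routine algebraic manipulation, and setting $t=1$ provides an independent check against Martinez and Savage's counts.
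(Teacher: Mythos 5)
Your proposal follows essentially the same route as the paper, which itself only sketches this argument: restrict the bijection $\varphi$ from Theorem~\ref{thm:unimodal_1} to each $\bI_{n}\left(R_{1},R_{2},R_{3}\right)\subseteq\bI_{n}\left(>,<,-\right)$, characterize the image inside $\mathcal{\widetilde{R}}_{n}$, and apply the symbolic method while tracking $\dist$ via elbows and lonely marked steps. Your dictionary (weak/strict/constant ascent controlling the unmarked skeleton, weak/strict/constant descent controlling which $N^*_t$ occur) and your flagged boundary subtleties at the peak and tail are consistent with the paper's construction, so this is a correct and somewhat more detailed elaboration of the intended proof.
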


Equations~\eqref{eq:unimodal_4} and~\eqref{eq:unimodal_5} are restatements of Theorems~\ref{thm:unimodal_1} and~\ref{thm:A071356}, respectively, included here for the sake of completeness.
By setting $t=1$ in each of the generating functions $I_{\left(R_{1},R_{2},R_{3}\right)}(z,t)$ in Theorem~\ref{thm:unimodal_2}, we obtain expressions for $\sum_{n\geq 0}\left|\bI_{n}\left(R_{1},R_{2},R_{3}\right)\right|z^{n}$ in each case, from where
one can recover the results listed in Table~\ref{tab:unimodal} with some algebraic manipulations.

It is natural to ask if an analogue of Corollary~\ref{cor:A071356} holds for any of the patterns in Table~\ref{tab:unimodal}.
In addition to $\left(>,\leq,-\right)$, the only other pattern on this list for which the distribution of $\dist$ on $\bI_{n}\left(R_{1},R_{2},R_{3}\right)$ is symmetric is $\left(=,<,-\right)$.
In this case,
$I_{\left(=,<,-\right)}(z,t)=1+\sum_{n\ge 1} t(t+1)^{n-1}  z^n$.
Computations for small values of $n$ also suggest that the distribution of $\dist$ on $\bI_{n}\left(R_{1},R_{2},R_{3}\right)$ is unimodal for each of the patterns in Table~\ref{tab:unimodal}.

\end{document}